\DeclareFontFamily{U}{rsfs}{\skewchar\font127 }
\DeclareFontShape{U}{rsfs}{m}{n}{%
   <-6.5> rsfs5
   <6.5-8> rsfs7
   <8-> rsfs10
}{}
\title{\LARGE\bfseries A Differential Game with Symmetric Incomplete Information on Probabilistic Initial Condition and with Signal Revelation}
\date{}
\author{Xiaochi WU\\ \small Department of Mathematics, Shantou University\\ \small 5 Cuifeng Road, Shantou, 515821, People's Republic of China}
\newtheorem{mdef}{Definition}[section]
\newtheorem{mlem}{Lemma}[section]
\newtheorem{mprps}{Proposition}[section]
\newtheorem{mthm}{Theorem}[section]
\newtheorem{mcor}{Corollary}[section]
\newtheorem{mrmk}{Remark}[section]
\newtheorem{massum}{Assumptions}[section]
\numberwithin{equation}{section}
\DeclareMathOperator*{\R}{\mathbb{R}}
\DeclareMathOperator*{\N}{\mathbb{N}}
\DeclareMathOperator*{\U}{\mathbb{U}}
\DeclareMathOperator*{\V}{\mathbb{V}}
\newcommand{\txt}[1]{\text{\textnormal{#1}}}
\newcommand{\pr}[1]{\left( #1 \right)}
\newcommand{\abs}[1]{\left| #1 \right|}
\newcommand{\supp}[1]{\txt{supp }#1}
\begin{document}

\clearpage
\maketitle


\bfseries \begin{center}Abstract:\end{center}
\mdseries In this paper, we investigate the existence and characterization of the value for a two-player zero-sum differential game with symmetric incomplete information on a continuum of initial positions and with signal revelation. Before the game starts, the initial position is chosen randomly according to a probability measure with compact support, and neither player is informed of the chosen initial position. However, they observe a public signal revealing the current state as soon as the trajectory of the dynamics hits a target set. We prove that, under a suitable notion of signal-dependent strategies, the value of the game exists, and the extended value function of the game is the unique viscosity solution of an associated Hamilton-Jacobi-Isaacs equation that satisfies a boundary condition.

\textbf{Key words:} Differential Games; 
Incomplete Information; 
Signal; 
Hamilton-Jacobi-Isaacs Equation.

\section*{Introduction}
In this paper, we study a two-person zero-sum differential game of symmetric incomplete information with a signal-revealing mechanism. The dynamical system is given by:
\begin{equation}\label{dym_sys1}
	\left\{\begin{aligned}
		&\dot{x}(t)=f\pr{x(t),u(t),v(t)},&t\ge 0,\\
		&\dot{y}(t)=g\pr{y(t),u(t),v(t)},&t\ge 0,\\
		&\big(x(0),y(0)\big)=(x_0,y_0):=z_0\in{\R}^n\times{\R},
	\end{aligned}\right.
\end{equation}
where $u:\R_+\to\U$ and $v:\R_+\to\V$ are Lebesgue measurable maps with $\U$, $\V$ being compact metric spaces. Both $f:{\R}^n\times \U\times \V\to {\R}^n$ and $g:{\R}\times \U\times \V\to {\R}^+_*$ are assumed regular enough so that the above dynamics \eqref{dym_sys1} has a unique solution denoted by 
	$$t\mapsto \pr{X^{x_0,u,v}_t, Y^{y_0,u,v}_t},$$
To the initial position $z_0$ and the pair of admissible controls $(u,v)$ is associated the following running cost 
\begin{equation}\label{payoff1}
	J(z_0,u,v)=\int^\infty_0 e^{-\lambda t}\ell\pr{x(t),y(t),u(t),v(t)}dt.
\end{equation}
Here $\lambda>0$ and $\ell:{\R}^n\times \R\times \U\times \V\to\R$ is bounded, continuous.\par
Let us describe the game procedure. Let $\mu_0$ be a probability measure on ${\R}^{n+1}$ with compact support. The game $\mathcal G(\mu_0)$ with symmetric incomplete information and signal revealing is played as follows:
\begin{enumerate}
	\item
		Before the game begins, the initial position $z_0=(x_0,y_0)\in\mathbb R^{n+1}$ is chosen randomly according to the probability measure $\mu_0$, and the chosen initial data $z_0$ is communicated to neither player.
	\item
		During the game, Player 1 chooses the control $u$ and aims to minimize the cost $J(z_0,u,v)$, while Player 2 chooses the control $v$ and aims to maximize the same cost. Both players are assumed to observe all played actions with perfect memory during the game.
	\item
		If the trajectory $t\mapsto Y^{y_0,u,v}_t$ hits the target set $[M_0,\infty)$, the current state $Z^{z_0,u,v}_t$ is publicly announced to both players at the hitting time $$t=\mathcal{T}(y_0,u,v):=\inf\{t\ge 0\ |Y^{y_0,u,v}_t\ge M_0\}.$$
	\item
		The dynamic \eqref{dym_sys1}, running cost \eqref{payoff1} and the probability measure $\mu_0$ are common knowledge of both players.
\end{enumerate}
In game $\mathcal G(\mu_0)$, the second equation in system \eqref{dym_sys1} can be interpreted as the accumulation of public knowledge during the game, and the random variable $\max\pr{M_0-y_0,0}$ can be viewed as the required quantity (also unknown) of accumulated knowledge before the current state is revealed. As shown in \cite{Wu_DGAA19}, step 3 of the above game procedure is equivalent to the observation of the following signal process with perfect memory:
	\begin{equation*}
		s_{z,u,v}(t)=\left\{\begin{aligned}
			&0\in\mathbb{R}^{n+2},&\txt{if }t<\mathcal{T}(y_0,u,v),\\
			&(1,X^{x_0,u,v}_t, Y^{y_0,u,v}_t),&\txt{else.}
		\end{aligned}\right.
	\end{equation*}	 
An important feature of such a signal revelation mechanism is that during the game, the players will update their information about the unknown initial data even before the actual revelation of the current state. Indeed, if the players receive no public signal before some moment $t>0$, they will learn that $$y_0\notin \{y\in\mathbb{R}\ |\ \mathcal{T}(y_0,u,v)<t\}.$$

Differential games with asymmetric information of finite type were investigated in \cite{Cardaliaguet_2007} (see also \cite{Survey_diff,Cardaliaguet_2009,O_Barton15}), which generalizes the theory of repeated games with incomplete information introduced in \cite{A&M}. Cases with asymmetric information on a continuum of initial position were considered in \cite{C&J&Q14,J&Q&X16,J&Q18,C21}. Our game model consists in a generalization of the differential game with symmetric incomplete information studied in \cite{C&Q08}. The signal mechanism in our game model is inspired by those in repeated games with incomplete information in \cite{Forges82,Kohlb&Z74,N&S97,N&S98}, and differential games with symmetric information of finite type and with similar signal structure were treated in \cite{Wu_SICON18,Wu_DGAA19}. A differential game with one-sided partial observation of the current state was studied in \cite{B&R95}.  

Our goal is to prove that game $\mathcal{G}(\mu_0)$ has a value and to obtain a characterization of the value function as the unique viscosity solution of an appropriate Hamilton-Jacobi-Isaacs equation. As mentioned above, under a suitable notion of strategies, the information structure will change progressively as the game unfolds, and therefore we have to restrict ourselves to a suitable open set to prove a dynamic programming principle for the value functions. Moreover, the boundary condition for the associated Hamilton-Jacobi-Isaacs equation is not automatically verified by the value functions. Thus, we can no longer prove the existence of value for $\mathcal{G}(\mu_0)$ directly by showing that it is the unique viscosity solution to the HJI equation as in \cite{C&Q08,E&S,J&Q&X16,C21}. Accordingly, we first prove the existence of value via the approach employed in \cite{C&J&Q14,J&Q&X16}. We show that the value functions of game $\mathcal{G}(\mu_0)$ is continuous by rewriting them respectively into the upper and lower values of another differential game with symmetric incomplete information and a terminal cost at the controlled stopping time $\mathcal{T}(y_0,u,v)$. Then the existence of value is obtained by approximating the probability measure $\mu_0$ with a sequence $\{\mu_n\}_{n\in{\N}_*}$ with finite support, and by passing to the limit using the existence result obtained in \cite{Wu_DGAA19}. Finally, inspired by \cite{J&Q18}, we introduce the notion of extended value functions and we show that they are the unique viscosity solution satisfying a boundary condition and a set of regularity conditions to the HJI equation on an open set dependent on the proability measure $\mu_0$ and the signal structure.

The rest of this paper is organized as follows. After the preliminary section presenting useful notations and assumptions on the game model, we introduce proper notions of signal dependent strategies in section 2 and we study their properties before writing the game $\mathcal{G}(\mu_0)$ in normal form. In section 3, we prove the regularity property of the value functions, and the existence of value under Isaacs' condition is established. In section 4, we study the extended value functions of the game and prove a dynamic programming principle for the extended value functions. The last section is devoted to the characterization of the extended value function as the unique viscosity solution of the HJI equation.
\section{Preliminaries and Assumptions}\label{sect_PR}
In this paper, for any $m\in{\N}_*$, we denote by $\|x\|$ the Euclidean norm of $x\in\mathbb{R}^m$, and the scalar product of any $x,y\in\mathbb{R}^m$ is denoted by $x\cdot y$. The open ball with center $x\in{\R}^m$ and radius $r>0$ is denoted by $B(x;r)$. For $X\subset \mathbb{R}^m$, let $C(X,X)$ denote the set of continuous maps from $X$ to $X$. Meanwhile, the notation $C(X)$ stands for the set of continuous real-valued functions on $X$.
\subsection{Dynamics and Payoff}
Let $\U$, $\V$ be compact metric spaces endowed respectively with the corresponding Borel $\sigma$-algebra. We denote respectively by $\mathcal U$ (resp. $\mathcal V$) the sets of admissible (Lebesgue measurable) controls $u:{\R}_+\to\U$ (resp. $v:{\R}_+\to\V$). We assume that:
\begin{massum}\label{Assum1}
	\begin{enumerate}[label=\roman*)]
		\item
			$f:{\R}^n\times \U\times \V\to {\R}^n$ is bounded and continuous in all variables, and Lipschitz continuous in the state variable $x$ uniformly with respect to $(u,v)$;
		\item
			$g:{\R}\times \U\times \V\to {\R}^+$ is bounded and continuous in all variables, and Lipschitz continuous in the first variable $y$ uniformly with respect to $(u,v)$;
		\item
			$g(y,u,v)>0$ for all $(y,u,v)\in(-\infty,M_0]\times\U\times\V$;
		\item
			$\ell:{\R}^n\times \R\times \U\times \V\to\R$ is bounded and continuous in all variables, and Lipschitz continuous in the state variables $z=(x,y)$ uniformly with respect to $(u,v)$.
	\end{enumerate}
\end{massum}
To simplify notation, we write for all $z=(x,y)\in{\R}^{n+1}$ and $(u,v)\in\mathbb U\times\mathbb{V}$,
	$$F(z,u,v)=\big(f(x,u,v),g(y,u,v)\big).$$
The dynamical system \eqref{dym_sys1} can thus be written as
\begin{equation}\label{dym_sys2}
	\left\{\begin{aligned}
		&\dot{z}(t)=F\big(z(t),u(t),v(t)\big),&t\ge 0,\\
		&z(0)=z_0\in{\R}^{n+1}.
	\end{aligned}\right.
\end{equation}
with the payoff 
\begin{equation}\label{payoff2}
	J(z_0,u,v)=\int^\infty_0 e^{-\lambda t}\ell\big(Z^{z_0,u,v}_t,u(t),v(t)\big)dt.
\end{equation}
It follows from Assumptions \ref{Assum1} that $F:{\R}^{n+1}\times \U\times \V\to {\R}^n\times{\R}_+$ is bounded and continuous in all variables, and Lipschitz continuous in the state variable $z$ uniformly on $\mathbb{U}\times\mathbb{V}$ with its Lipschitz constant denoted by $L_F>0$. It's well-known that, under Assumptions \ref{Assum1}, given initial position $z_0=(x_0,y_0)\in{\R}^{n+1}$ and any pair of admissible controls $(u,v)\in\mathcal U\times\mathcal V$, there exists a unique solution to system \eqref{dym_sys2}, and we denote its trajectory by
$$t\mapsto Z^{z_0,u,v}_t:=\pr{X^{x_0,u,v}_t, Y^{x_0,u,v}_t}.$$ 
Hence, the hitting time $\mathcal T(y_0,u,v)$ of the trajectory $Y^{y_0,u,v}_\cdot$ at $[M_0,\infty)$ can be rewritten as
\begin{equation*}
	\mathcal{T}(z_0,u,v):=\inf\{t\ge 0\ |Z^{z_0,u,v}_t\in{\R}^n\times [M_0,\infty)\}=\mathcal{T}(y_0,u,v).
\end{equation*}
In addition, following standard estimations, for all $T>0$ and $(z,u,v)\in{\R}^{n+1}\times \mathcal{U}\times\mathcal{V}$,
\begin{align}\label{eq_assum_reg1}
	&\|Z^{z,u,v}_t-Z^{z'u,v}_t\|\le e^{L_F T}\|x-x'\|,\ \forall t\in[0,T]\\
	\label{eq_assum_reg2}&\|Z^{z,u,v}_t-Z^{z,u,v}_s\|\le \|F\|_{\infty}|t-s|,\ \forall t,s\ge 0.
\end{align}
Let us equipped the sets of admissible controls $\mathcal U$ and $\mathcal V$ respectively with $L^1_{loc}$-topology and the associated Borel $\sigma$-algebra. It is a classical result that the map $(u,v)\mapsto Z^{z_0,u,v}_t$ is continuous. We recall the the following regularity properties of the payoff $J(z_0,u,v)$ and the hitting time $\mathcal T(z_0,u,v)$.
\begin{mlem}[\cite{book_B&CD}]\label{lem_costcont}
	The function $(z,u,v)\mapsto J(z,u,v)$ is continuous. In particular, for all $(u,v)\in\mathcal U\times\mathcal V$, $z\mapsto J(z,u,v)$ is H\"{o}lder continuous independent of $(u,v)$ with exponent $\gamma$:
	$$\gamma=\left\{\begin{aligned}
	&1,&\text{if }L<\lambda,\\
	&\text{any }\gamma<1,&\text{if }L=\lambda,\\
	&\lambda/L,&\text{if }L>\lambda,
\end{aligned}\right.$$
where $L=\max(L_F,L_\ell)$.
\end{mlem}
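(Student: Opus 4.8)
The plan is to establish the two regularity statements separately and then combine them. For continuity in the controls I would fix $z$ and use the already-quoted continuity of $(u,v)\mapsto Z^{z,u,v}_t$ together with dominated convergence. If $(u_n,v_n)\to(u,v)$ in $L^1_{loc}$, then after passing to a subsequence one has $(u_n(t),v_n(t))\to(u(t),v(t))$ for a.e.\ $t$, while $Z^{z,u_n,v_n}_t\to Z^{z,u,v}_t$ for every $t$; continuity of $\ell$ gives pointwise convergence of the integrands, and since $e^{-\lambda t}|\ell|\le\|\ell\|_\infty e^{-\lambda t}$ is integrable over $[0,\infty)$, dominated convergence yields $J(z,u_n,v_n)\to J(z,u,v)$ along the subsequence. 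The standard subsequence argument then upgrades this to convergence of the full sequence.

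The quantitative heart is the H\"older estimate in $z$, uniform in $(u,v)$. Fix $(u,v)$ and write $\delta=\|z-z'\|$. Using the Lipschitz continuity of $\ell$ in the state variable and the Gr\"onwall bound $\|Z^{z,u,v}_t-Z^{z',u,v}_t\|\le e^{L_F t}\delta$ coming from \eqref{eq_assum_reg1}, together with $L=\max(L_F,L_\ell)$, the integrand obeys
$$e^{-\lambda t}\big|\ell(Z^{z,u,v}_t,u(t),v(t))-\ell(Z^{z',u,v}_t,u(t),v(t))\big|\le L\,e^{(L-\lambda)t}\delta,$$
while trivially it is also bounded by $2\|\ell\|_\infty e^{-\lambda t}$. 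I would then split $[0,\infty)=[0,T]\cup[T,\infty)$, applying the first bound on $[0,T]$ and the crude second bound on the tail, to obtain
$$|J(z,u,v)-J(z',u,v)|\le L\delta\int_0^T e^{(L-\lambda)t}\,dt + 2\|\ell\|_\infty\int_T^\infty e^{-\lambda t}\,dt.$$

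From here three cases appear. (i) If $L<\lambda$, let $T\to\infty$: the right-hand side becomes $\tfrac{L}{\lambda-L}\delta$, giving Lipschitz continuity, i.e.\ $\gamma=1$. (ii) If $L>\lambda$, the first integral is at most $\tfrac{L}{L-\lambda}\delta\,e^{(L-\lambda)T}$ and the second equals $\tfrac{2\|\ell\|_\infty}{\lambda}e^{-\lambda T}$, so the bound has the form $C_1\delta\,e^{(L-\lambda)T}+C_2 e^{-\lambda T}$; optimizing over $T$, i.e.\ choosing $e^{LT}$ proportional to $\delta^{-1}$, makes both terms scale like $\delta^{\lambda/L}$, yielding $\gamma=\lambda/L$. (iii) If $L=\lambda$, the first integral equals $L\delta T$, so the bound is $L\delta T+\tfrac{2\|\ell\|_\infty}{\lambda}e^{-\lambda T}$; taking $T\sim\lambda^{-1}\log(1/\delta)$ produces a bound of order $\delta\log(1/\delta)$, which is $O(\delta^{\gamma})$ for every $\gamma<1$. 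In all cases the constants are manifestly independent of $(u,v)$.

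Finally, joint continuity of $(z,u,v)\mapsto J(z,u,v)$ follows by combining the two pieces through
$$|J(z,u,v)-J(z_0,u_0,v_0)|\le |J(z,u,v)-J(z_0,u,v)|+|J(z_0,u,v)-J(z_0,u_0,v_0)|,$$
where the first term is controlled by the H\"older estimate uniformly in $(u,v)$ and the second tends to $0$ by continuity in the controls. I expect the main obstacle to be case (ii), with the borderline (iii): when $L\ge\lambda$ the discount factor no longer dominates the exponential spreading of trajectories, so a direct Gr\"onwall estimate diverges and one must interpolate between the finite-horizon Lipschitz bound and the $L^\infty$ tail bound, optimizing the splitting time $T$ to extract the correct exponent. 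The remaining steps are routine.
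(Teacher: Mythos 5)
Your proposal is correct; the paper itself does not prove Lemma \ref{lem_costcont} but cites it from \cite{book_B&CD}, and your argument (Gr\"onwall bound $\|Z^{z,u,v}_t-Z^{z',u,v}_t\|\le e^{L_F t}\|z-z'\|$, splitting the time axis at $T$, and optimizing $T$ against the discounted tail to extract the exponent $\gamma$ in the three cases $L<\lambda$, $L=\lambda$, $L>\lambda$) is precisely the classical proof given in that reference, with the joint continuity then following from the uniform H\"older estimate in $z$ plus dominated convergence in the controls exactly as you describe. No gaps: the subsequence extraction for a.e.\ convergence from $L^1_{loc}$ and the final two-term triangle-inequality decomposition are both sound, so nothing further is needed.
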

\begin{mlem}[\cite{Wu_SICON18}]
	The map $(y,u,v)\mapsto \mathcal T(y,u,v)$ is continuous. Furthermore, for any pair of admissible controls $(u,v)\in\mathcal U\times\mathcal V$, the function $\mathcal T\pr{\cdot,u,v}$ is locally Lipschitz continuous. Namely, for any $S<M_0$, there exists $C_S>0$ such that for all $S\le y_0\le y_0'<M_0$ and any $(u,v)\in\mathcal U\times\mathcal V$,
	\begin{equation}\label{lip_hittime}
		\abs{\mathcal T\pr{y_0,u,v}-\mathcal T\pr{y_0',u,v}}\le C_S\abs{y_0-y_0'}.
	\end{equation}
\end{mlem}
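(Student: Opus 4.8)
The plan is to exploit the strict positivity of $g$ below the target, which keeps the trajectory $Y^{y_0,u,v}_\cdot$ strictly increasing and forces a transversal crossing of $M_0$. First I would record the basic consequence of Assumption \ref{Assum1} iii): for any $S<M_0$ the set $[S,M_0]\times\U\times\V$ is compact, so by continuity $m_S:=\min_{[S,M_0]\times\U\times\V} g>0$. Since $g>0$ keeps the trajectory increasing while it remains below $M_0$, any trajectory issued from $y_0\in[S,M_0)$ stays in $[S,M_0]$ up to its hitting time, reaches the level $M_0$ with $Y^{y_0,u,v}_{\mathcal T(y_0,u,v)}=M_0$, and from $M_0-y_0=\int_0^{\mathcal T}g\,ds\ge m_S\,\mathcal T(y_0,u,v)$ obeys the uniform bound $\mathcal T(y_0,u,v)\le \bar T_S:=(M_0-S)/m_S$.

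For the local Lipschitz estimate \eqref{lip_hittime}, fix $(u,v)$ and $S\le y_0\le y_0'<M_0$, and write $T=\mathcal T(y_0,u,v)$, $T'=\mathcal T(y_0',u,v)$. Because the dynamics is one-dimensional and $g$ is Lipschitz, uniqueness forbids trajectories from crossing, so $Y^{y_0,u,v}_t\le Y^{y_0',u,v}_t$ for all $t$; in particular the higher trajectory reaches $M_0$ first, giving $T'\le T$. On $[T',T]$ the lower trajectory stays in $[S,M_0]$ where $g\ge m_S$, whence
\begin{equation*}
	Y^{y_0',u,v}_{T'}-Y^{y_0,u,v}_{T'}=M_0-Y^{y_0,u,v}_{T'}=\int_{T'}^{T}g\big(Y^{y_0,u,v}_s,u(s),v(s)\big)\,ds\ge m_S\,(T-T').
\end{equation*}
Combining this with the Gronwall estimate \eqref{eq_assum_reg1} applied at time $T'\le\bar T_S$ (taking the two initial points to share the same $x$-coordinate, so that $\abs{Y^{y_0',u,v}_{T'}-Y^{y_0,u,v}_{T'}}\le e^{L_F\bar T_S}\abs{y_0-y_0'}$) yields $\abs{T-T'}\le C_S\abs{y_0-y_0'}$ with $C_S:=e^{L_F\bar T_S}/m_S$.

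For the joint continuity of $(y,u,v)\mapsto\mathcal T(y,u,v)$ I would use the continuity of the flow $(t,y,u,v)\mapsto Y^{y,u,v}_t$ (continuous in $(u,v)$ for the $L^1_{loc}$ topology uniformly on compact time intervals, and Lipschitz in $t$ by \eqref{eq_assum_reg2}) together with transversality. Lower semicontinuity is immediate: if $(y_k,u_k,v_k)\to(y_0,u,v)$ with $\mathcal T(y_k,u_k,v_k)\to T^*$, passing to the limit in $Y^{y_k,u_k,v_k}_{\mathcal T(y_k,u_k,v_k)}=M_0$ gives $Y^{y_0,u,v}_{T^*}=M_0$, so $T^*\ge\mathcal T(y_0,u,v)$ by minimality of the hitting time. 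For upper semicontinuity, fix a small $\epsilon>0$ and set $T=\mathcal T(y_0,u,v)$; choose $\epsilon'\in(0,\epsilon)$ so small that $\delta:=M_0-Y^{y_0,u,v}_{T-\epsilon'}\le\|g\|_\infty\epsilon'$ stays strictly below the $\rho>0$ for which $g\ge m>0$ on $[M_0-\rho,M_0]\times\U\times\V$. For data close enough to $(y_0,u,v)$ the perturbed trajectory lies within $(M_0-\rho,M_0)$ at time $T-\epsilon'$, and while it remains in $[M_0-\rho,M_0]$ it increases at rate at least $m$, hence reaches $M_0$ before time $T-\epsilon'+2\delta/m\le T+\epsilon$; this gives $\limsup\mathcal T(y_k,u_k,v_k)\le T$.

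The main obstacle is precisely this upper semicontinuity step: a priori the perturbed trajectory could hover near $M_0$ and cross it much later, or the limiting trajectory could touch $M_0$ tangentially, either of which would break continuity. It is the uniform lower bound $g\ge m>0$ near the target, supplied by Assumption \ref{Assum1} iii) and compactness of $\U\times\V$, that rules this out and forces a transversal crossing; the same bound is what converts the spatial gap into the time gap in the Lipschitz estimate.
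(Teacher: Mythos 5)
The paper itself offers no proof of this lemma --- it is quoted from \cite{Wu_SICON18} --- so your argument cannot be checked against an in-paper proof; judged on its own terms it is correct, and it uses the natural (and almost certainly the intended) mechanism: Assumption \ref{Assum1} iii) plus compactness of $\U\times\V$ give a uniform lower bound $m_S>0$ on $g$ over $[S,M_0]\times\U\times\V$, which forces a transversal crossing, a uniform bound $\bar T_S=(M_0-S)/m_S$ on hitting times, and the conversion of the spatial gap $\|Y^{y_0',u,v}_{T'}-Y^{y_0,u,v}_{T'}\|$ into the time gap $m_S|T-T'|$; combined with the scalar comparison principle (valid since $g$ is Lipschitz in $y$, so ordered initial data stay ordered) and the Gronwall estimate \eqref{eq_assum_reg1}, this yields \eqref{lip_hittime} with $C_S=e^{L_F\bar T_S}/m_S$, uniformly in $(u,v)$ as required. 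Two points of bookkeeping are worth making explicit, though neither is a genuine gap: in the upper-semicontinuity step, the conclusion $T-\epsilon'+2\delta/m\le T+\epsilon$ needs $\epsilon'$ chosen not merely so that $\delta<\rho$ but also so that $2\|g\|_\infty\epsilon'/m\le\epsilon$ (both are satisfied for $\epsilon'$ small, so just state the choice); and the argument via $T-\epsilon'$ tacitly assumes $T>0$, so the degenerate case $y_0\ge M_0$, where $\mathcal T=0$ and continuity follows directly from the bound $\mathcal T(y_k,u_k,v_k)\le(M_0-y_k)^+/m$, should be dispatched separately. Your lower-semicontinuity step is also sound, but note it leans on the continuity of $(u,v)\mapsto Z^{z_0,u,v}_t$ in the $L^1_{loc}$ topology, which the paper asserts just before the lemma; citing that explicitly closes the loop.
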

As a direct consequence of \eqref{eq_assum_reg1}, \eqref{eq_assum_reg2}, and the above lemma, for any $(u,v)\in\mathcal{U}\times\mathcal{V}$, the map
$(z_0,u,v)\mapsto Z^{z_0,u,v}_{\mathcal T(z_0,u,v)}$ is Borel measurable.

\subsection{Probability Distributions on the Initial States}
In this subsection, we present several useful notations, tools and results about probability measures and the theory of optimal transport. We denote by $\mathcal P(\mathbb{R}^{n+1})$ the set of Borel probability measures $\mu$ on $\mathbb{R}^{n+1}$ with compact support. 

It is well-known that $\mathcal P(\mathbb{R}^{n+1})$ can be endowed with the Wasserstein distance $W_2$:
$$W_2(\mu,\nu)=\min_{\pi\in\Pi(\mu,\nu)}\Big\{\int_{Z^2}\|z-z'\|^2d\pi(z,z')\Big\}^\frac{1}{2},\ \forall \mu,\nu\in \mathcal P(\mathbb{R}^{n+1}).$$
Here $\Pi(\mu,\nu)$ denote the set of probability measures on $\mathbb{R}^{n+1}\times\mathbb{R}^{n+1}$ with $\mu$ as its first marginal and $\nu$ its second marginal. For all $\mu\in\mathcal{P}(\mathbb{R}^{n+1})$, the push-forward measure $\Phi\sharp\mu$ of $\mu$ by the Borel measurable map $\Phi:\mathbb{R}^{n+1}\to\mathbb{R}^{n+1}$ is define by
$$\Phi\sharp\mu(A)=\mu(\Phi^{-1}(A)),\forall A\in\mathcal{B}(\mathbb{R}^{n+1}).$$

Let the compact subset $Z=X\times [0,M_0+\eta]\subset{\R}^{n+1}$ where $X\subset{\R}^n$ is compact and $\eta>0$. We denote by $\Delta(Z)$ the set of probability measures on $Z$.  We denote by $\Delta(Z)$ the set of Borel probability measures on $Z$, and we equipped $\Delta(Z)$ with the $W_2$-distance. We refer interested readers to \cite{book_Villani} for classic results of the theory of optimal transport and Wasserstein distance. For any $\Phi,\Psi\in C(Z,Z)$ and $\mu\in \Delta(Z)$, we denote
\begin{align*}
	\|\Phi\|_{L^2_\mu}&=\Big(\int_Z \Phi^2(z)d\mu(z)\Big)^\frac{1}{2};\\
	\langle \Phi,\Psi\rangle_{L^2_\mu}&=\int_Z \Phi(z)\cdot\Psi(z)d\mu(z).
\end{align*}

\subsection{Isaacs's Condition}
Let us denote, for any $z=(x,r)\in\mathbb R^{n+1}$, $\xi\in\mathbb R^n$, and $\eta\in\mathbb R$,
	\begin{align*}
		H^+(z,\zeta)=&\inf_{u\in\U}\sup_{v\in\V}\big\{ F(z,u,v)\cdot\zeta+\ell(z,u,v)\big\},\\
		H^-(z,\zeta)=&\sup_{v\in\V}\inf_{u\in\U}\big\{ F(z,u,v)\cdot\zeta +\ell(z,u,v)\big\}.
	\end{align*}
	Let us recall the following Isaacs' condition \eqref{eq_IC1}, which is often assumed in the literature for establishing the existence of value for zero-sum differential games with complete information (for cases without Isaacs' condition, see \cite{B&L&Q13} and \cite{B&Q&R&X15}). 
	\begin{equation}\label{eq_IC1}
			\forall (z,\zeta)\in\mathbb R^{n+1}\times\mathbb R^{n+1},\ H^+(z,\zeta)=H^-(z,\zeta).
	\end{equation}
	In this paper, we assume the following two different version of Isaacs' condition:
	\begin{enumerate}[label=(IC\arabic*)]
		\item $\forall (\mu,p)\in\mathcal P(\mathbb R^{n+1})\times C(\mathbb{R}^{n+1},\mathbb{R}^{n+1})$,
			\begin{equation}\label{eq_IC2}
				\begin{aligned}
					&\inf_{u\in\mathbb{U}}\sup_{v\in\mathbb V}\int_{\mathbb{R}^{n+1}}\big\{F(z,u,v)\cdot p(z)+\ell(z,u,v)\big\}d\mu(z)\\
					=&\sup_{v\in\mathbb V}\inf_{u\in\mathbb{U}}\int_{\mathbb{R}^{n+1}}\big\{F(z,u,v)\cdot p(z)+\ell(z,u,v)\big\}d\mu(z).
				\end{aligned}
			\end{equation}
		\item $\forall (\mu,p)\in\Delta(Z)\times C(Z,\mathbb{R}^{n+1})$,
			\begin{equation}\label{eq_IC3}
				\begin{aligned}
					&\inf_{u\in\mathbb{U}}\sup_{v\in\mathbb V}\int_{Z}\big\{F(z,u,v)\cdot p(z)+\ell(z,u,v)\big\}d\mu(z)\\
					=&\sup_{v\in\mathbb V}\inf_{u\in\mathbb{U}}\int_{Z}\big\{F(z,u,v)\cdot p(z)+\ell(z,u,v)\big\}d\mu(z).
				\end{aligned}
			\end{equation}
	\end{enumerate}
We recall the following result from \cite{J&Q18} (Proposition 1):
\begin{mlem}
	The conditions below are equivalent to the Isaacs' condition \eqref{eq_IC3}:
	\begin{enumerate}[label=(IC\arabic*)]
		\item $\forall I\in{\N}_*$, $\forall \mu=\sum^I_{i=1}q_i\delta_{z_i}\in\Delta(Z)$, and $\forall p=(p_1,...,p_I)\in\mathbb{R}^{(n+1)I}$,
			\begin{equation}\label{eq_IC4}
				\begin{aligned}
					&\inf_{u\in\mathbb{U}}\sup_{v\in\mathbb V}\sum_{i=1}^I q_i \big[F(z_i,u,v)\cdot p_i+\ell(z_i,u,v)\big]\\
					=&\sup_{v\in\mathbb V}\inf_{u\in\mathbb{U}}\sum_{i=1}^I q_i \big[F(z_i,u,v)\cdot p_i+\ell(z_i,u,v)\big].
				\end{aligned}
			\end{equation}
		\item $\forall (\mu,\Phi,p)\in\Delta(Z)\times C(Z,Z)\times C(Z,\mathbb{R}^{n+1})$,
			\begin{equation}\label{eq_IC5}
				\begin{aligned}
					\mathcal{H}(\mu,\Phi,p):=&\inf_{u\in\mathbb{U}}\sup_{v\in\mathbb V}\int_{Z}\big\{F(\Phi(z),u,v)\cdot p(z)+\ell(\Phi(z),u,v)\big\}d\mu(z)\\
					=&\sup_{v\in\mathbb V}\inf_{u\in\mathbb{U}}\int_{Z}\big\{F(\Phi(z),u,v)\cdot p(z)+\ell(\Phi(z),u,v)\big\}d\mu(z).
				\end{aligned}
			\end{equation}
	\end{enumerate}
\end{mlem}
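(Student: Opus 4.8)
The plan is to prove the equivalences through the cycle of implications \eqref{eq_IC5}$\Rightarrow$\eqref{eq_IC3}$\Rightarrow$\eqref{eq_IC4}$\Rightarrow$\eqref{eq_IC5}; since \eqref{eq_IC3} appears in the cycle, this yields that each of \eqref{eq_IC4} and \eqref{eq_IC5} is equivalent to \eqref{eq_IC3}. The implication \eqref{eq_IC5}$\Rightarrow$\eqref{eq_IC3} is immediate: specializing \eqref{eq_IC5} to the identity map $\Phi=\mathrm{id}_Z\in C(Z,Z)$ turns every integrand $F(\Phi(z),u,v)\cdot p(z)+\ell(\Phi(z),u,v)$ into $F(z,u,v)\cdot p(z)+\ell(z,u,v)$, so the identity \eqref{eq_IC5} becomes precisely \eqref{eq_IC3}.

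For \eqref{eq_IC3}$\Rightarrow$\eqref{eq_IC4}, I would realize a discrete datum as an integral against a continuous vector field. Given $\mu=\sum_{i=1}^I q_i\delta_{z_i}\in\Delta(Z)$ and $p=(p_1,\dots,p_I)\in\mathbb{R}^{(n+1)I}$, I first merge coinciding support points: whenever $z_i=z_j$, I replace the two terms by a single one carrying weight $q_i+q_j$ and vector $(q_ip_i+q_jp_j)/(q_i+q_j)$, an operation that leaves the bracketed sum in \eqref{eq_IC4} unchanged for every $(u,v)\in\mathbb{U}\times\mathbb{V}$. Once the support points are distinct, Tietze's extension theorem (applied to each component) provides $\bar p\in C(Z,\mathbb{R}^{n+1})$ with $\bar p(z_i)=p_i$ for all $i$, so that $\int_Z\{F(z,u,v)\cdot\bar p(z)+\ell(z,u,v)\}\,d\mu(z)=\sum_{i}q_i[F(z_i,u,v)\cdot p_i+\ell(z_i,u,v)]$; applying \eqref{eq_IC3} to the pair $(\mu,\bar p)$ then gives \eqref{eq_IC4}.

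The substantive step is \eqref{eq_IC4}$\Rightarrow$\eqref{eq_IC5}, which I would obtain by discretization and a passage to the limit. Fixing $(\mu,\Phi,p)\in\Delta(Z)\times C(Z,Z)\times C(Z,\mathbb{R}^{n+1})$, I write $h(z,u,v)=F(\Phi(z),u,v)\cdot p(z)+\ell(\Phi(z),u,v)$ and choose finitely supported $\mu_N=\sum_i q_i^N\delta_{z_i^N}\in\Delta(Z)$ with $W_2(\mu_N,\mu)\to0$, which exists because $Z$ is compact. Applying \eqref{eq_IC4} to the discrete data given by the points $\Phi(z_i^N)\in Z$, the weights $q_i^N$, and the vectors $p(z_i^N)$ yields, for each $N$, the minimax identity $\inf_u\sup_v\int_Z h\,d\mu_N=\sup_v\inf_u\int_Z h\,d\mu_N$, since $\int_Z h\,d\mu_N=\sum_i q_i^N[F(\Phi(z_i^N),u,v)\cdot p(z_i^N)+\ell(\Phi(z_i^N),u,v)]$. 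It then remains to pass to the limit $N\to\infty$ on both sides.

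The key analytic point is that this passage must respect the inf--sup operations, for which I would establish that $\int_Z h\,d\mu_N\to\int_Z h\,d\mu$ uniformly in $(u,v)$. The family $\{h(\cdot,u,v)\}_{(u,v)}$ is uniformly bounded and equicontinuous on $Z$: $\Phi$ and $p$ are uniformly continuous on the compact set $Z$, $F$ and $\ell$ are bounded and Lipschitz in the state variable uniformly in $(u,v)$ by Assumptions \ref{Assum1}, and $p$ is bounded, so $z\mapsto h(z,u,v)$ admits a common modulus of continuity and a common bound. By Arzel\`a--Ascoli this family is relatively compact in $C(Z)$, and the weak convergence $\mu_N\to\mu$ is uniform over compact subsets of $C(Z)$; hence $\sup_{(u,v)}\big|\int_Z h\,d\mu_N-\int_Z h\,d\mu\big|\to0$. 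Since the maps $G\mapsto\inf_u\sup_v G$ and $G\mapsto\sup_v\inf_u G$ are $1$-Lipschitz for the supremum norm of the integrand, both sides of the identity for $\mu_N$ converge to the corresponding sides for $\mu$, yielding $\mathcal{H}(\mu,\Phi,p)=\inf_u\sup_v\int_Z h\,d\mu=\sup_v\inf_u\int_Z h\,d\mu$, that is \eqref{eq_IC5}. I expect the main obstacle to be exactly this interchange: weak convergence only gives convergence of the integrals for each fixed $(u,v)$, and upgrading it to convergence uniform in $(u,v)$ --- where the equicontinuity furnished by the regularity of $F$, $\ell$, $\Phi$ and $p$ is indispensable --- is what makes the limit commute with the inf--sup.
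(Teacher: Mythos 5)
Your proof is correct, but note that the paper does not prove this lemma at all: it is recalled verbatim from \cite{J&Q18} (Proposition 1), so there is no internal proof to compare against. Judged on its own, your cycle \eqref{eq_IC5}$\Rightarrow$\eqref{eq_IC3}$\Rightarrow$\eqref{eq_IC4}$\Rightarrow$\eqref{eq_IC5} is sound and handles the two genuinely delicate points. First, in \eqref{eq_IC3}$\Rightarrow$\eqref{eq_IC4} you correctly noticed that a continuous $\bar p$ cannot interpolate distinct vectors $p_i\ne p_j$ at coinciding points $z_i=z_j$, and your merging step is exactly right: since the sum depends on the data only through $\sum_i q_i\,F(z_i,u,v)\cdot p_i$, replacing coinciding points by the weight $q_i+q_j$ and the barycentric vector $(q_ip_i+q_jp_j)/(q_i+q_j)$ leaves it unchanged for every $(u,v)$, after which Tietze applies to the (closed) finite set of distinct points. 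Second, in \eqref{eq_IC4}$\Rightarrow$\eqref{eq_IC5} you correctly identified that pointwise convergence $\int_Z h(\cdot,u,v)\,d\mu_N\to\int_Z h(\cdot,u,v)\,d\mu$ for each fixed $(u,v)$ is not enough to commute with $\inf_u\sup_v$; your equicontinuity bound
\begin{equation*}
	\abs{h(z,u,v)-h(z',u,v)}\le L_F\|p\|_\infty\|\Phi(z)-\Phi(z')\|+\|F\|_\infty\|p(z)-p(z')\|+L_\ell\|\Phi(z)-\Phi(z')\|
\end{equation*}
is valid under Assumptions \ref{Assum1} with $\Phi,p$ uniformly continuous on the compact $Z$, and the Arzel\`{a}--Ascoli plus finite $\varepsilon$-net argument does upgrade weak convergence to convergence uniform in $(u,v)$, after which the $1$-Lipschitz property of $G\mapsto\inf_u\sup_v G$ and $G\mapsto\sup_v\inf_u G$ in the sup norm (the paper's Lemma \ref{lem_regtech}) finishes the limit passage. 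One cosmetic remark: when you apply \eqref{eq_IC4} to the data $(\Phi(z_i^N),q_i^N,p(z_i^N))$, the images $\Phi(z_i^N)$ may again coincide for distinct $i$, but this is harmless either under the natural reading of \eqref{eq_IC4} as quantifying over representations, or by reusing your own merging trick.
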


\section{Strategies and Value Functions}\label{sect_SV}
In this section, we introduce the definition of signal-dependent non-anticipative strategies with delay for game $\mathcal{G}(\mu_0)$. The notion of non-anticipative strategies with delay (in short, NAD strategies) for differential games was studied in \cite{C&Q08}, and signal-dependent NAD strategies for differential games with signal revelation were introduced in \cite{Wu_SICON18} and \cite{Wu_DGAA19} regarding different types of signal functions. By playing signal-dependent strategies, the players choose their actions according to the presence or the absence of signal revelation. In addition, the property of NAD strategies (see Lemma \ref{SNAD_lem2} of Section 2.1) allows us to formulate the game in normal form and thereby define its value functions.
\subsection{Strategies}
We first recall the notion of NAD strategy (cf. \cite{C&Q08}):
\begin{mdef}[NAD strategy]\label{Def_NAD}
	An NAD strategy for Player 1 in game $\mathcal{G}(\mu_0)$ is a Borel measurable map $\alpha:\mathcal V\to\mathcal U$ such that there eixtss $\tau>0$, for any $v_1,v_2\in\mathcal V$ and $t\ge 0$, if $v_1=v_2$ a.e. on $[0,t]$, then
	$$\alpha(v_1)=\alpha(v_2) \txt{ a.e. on }[0,t+\tau].$$
	NAD strategies for Player 2 are defined symmetrically.
\end{mdef}
We denote by $\mathcal A_d$ (resp. $\mathcal B_d$) the set of NAD strategies for Player 1 (resp. Player 2). An important property of NAD strategies is stated in the following:
\begin{mlem}[\cite{C&Q08}]\label{SNAD_lem1B}
	Given a pair of NAD strategies $(\alpha,\beta)\in\mathcal{A}_d\times\mathcal{B}_d$, there exists a unique pair of admissible controls $(u_{\alpha\beta},v_{\alpha\beta})\in\mathcal U\times\mathcal V$ such that
	$$\alpha(v_{\alpha\beta})=u_{\alpha\beta}\txt{ and }\beta(u_{\alpha\beta})=v_{\alpha\beta}.$$
\end{mlem}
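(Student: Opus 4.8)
The plan is to build the fixed point $(u_{\alpha\beta},v_{\alpha\beta})$ by a forward induction over time intervals of a common delay length, exploiting the fact that the delay forces each strategy's output on the next block to depend only on its input over the already-determined past.

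First I would set $\tau=\min(\tau_\alpha,\tau_\beta)>0$, where $\tau_\alpha,\tau_\beta$ are the delays associated with $\alpha$ and $\beta$ by Definition \ref{Def_NAD}; both strategies then have delay at least $\tau$. The base step uses the delay property with $t=0$: since any two controls agree a.e.\ on the null set $[0,0]$, the map $\alpha$ sends every $v\in\mathcal V$ to the same restriction on $[0,\tau]$, and symmetrically for $\beta$. Hence $u_{\alpha\beta}|_{[0,\tau]}:=\alpha(v)|_{[0,\tau]}$ and $v_{\alpha\beta}|_{[0,\tau]}:=\beta(u)|_{[0,\tau]}$ are well defined independently of the arbitrary $u,v$, and are in fact forced, giving both existence and uniqueness on $[0,\tau]$.

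For the inductive step, suppose $(u_{\alpha\beta},v_{\alpha\beta})$ has been uniquely determined on $[0,k\tau]$ and satisfies the two fixed-point identities there. Applying the delay property of $\alpha$ with $t=k\tau$, the restriction of $\alpha(w)$ to $[0,(k+1)\tau]$ depends only on $w|_{[0,k\tau]}$; evaluating at $w=v_{\alpha\beta}$, whose restriction to $[0,k\tau]$ is already fixed, determines $u_{\alpha\beta}$ on $[k\tau,(k+1)\tau]$ unambiguously as $\alpha(v_{\alpha\beta})$ there. The same argument applied to $\beta$ determines $v_{\alpha\beta}$ on the new block. Letting $k\to\infty$ yields a pair of admissible controls defined on all of $\R_+$.

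The point that needs care, and the main obstacle, is to verify that the controls assembled block by block really satisfy the \emph{global} identities $\alpha(v_{\alpha\beta})=u_{\alpha\beta}$ and $\beta(u_{\alpha\beta})=v_{\alpha\beta}$ on $\R_+$, rather than merely on each finite interval, together with the measurability of the constructed controls. Both follow from the delay property itself: since $v_{\alpha\beta}|_{[0,k\tau]}$ determines $\alpha(v_{\alpha\beta})|_{[0,(k+1)\tau]}$, the value of $\alpha(v_{\alpha\beta})$ on any bounded interval is already pinned down at a finite stage and coincides with $u_{\alpha\beta}$ there; since this holds for every $k$, the identity holds a.e.\ on $\R_+$, and measurability is inherited from the Borel measurability of $\alpha,\beta$ and the countable pasting. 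Uniqueness is built into the construction, as at each stage the new block is forced, so any other fixed point must agree with $(u_{\alpha\beta},v_{\alpha\beta})$ on every $[0,k\tau]$ and hence everywhere.
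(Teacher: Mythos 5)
Your proof is correct: the blockwise induction over intervals of length $\tau=\min(\tau_\alpha,\tau_\beta)$, with the base step from the vacuous agreement on $[0,0]$ and the delay property pinning down each new block, is precisely the standard argument for this fixed-point lemma, which the paper itself states without proof (citing \cite{C&Q08}, where essentially this construction is carried out). The only presentational gap is that at stage $k$ you should say explicitly that $v_{\alpha\beta}|_{[0,k\tau]}$ is first extended arbitrarily to an element of $\mathcal V$ before applying $\alpha$ (the delay property guaranteeing independence of the extension), but your remark that $\alpha(w)|_{[0,(k+1)\tau]}$ depends only on $w|_{[0,k\tau]}$ already contains this point.
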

In other words, if the players choose the pair of NAD strategies $(\alpha,\beta)$, then the associated pair of admissible controls $(u_{\alpha\beta},v_{\alpha\beta})$ will be played.
With the signal mechanism in game $\mathcal G(\mu_0)$, players should employ signal-dependent NAD strategies (SNAD strategies). The following notion of signal-dependent NAD strategies is adapted from \cite{C&J&Q14} and \cite{Wu_SICON18}.
\begin{mdef}[SNAD strategy]\label{Def_SNAD}
	An SNAD strategy for Player 1 in game $\mathcal{G}(\mu_0)$ is a Borel measurable map $A:\R_+\times{\R}^{n+1}\times\mathcal V\to\mathcal U$ such that:
	\begin{enumerate}[label=\roman*)]
		\item
			$\exists \tau>0$, $\forall T_1,T_2\in \R_+$, $z_1,z_2\in{\R}^{n+1}$ and $v\in\mathcal V$,
			$$A(T_1,z_1,v)=A(T_2,z_2,v) \txt{ a.e. on }[0,T_1\wedge T_2+\tau].$$
		\item
			$\forall (T,z)\in \R_+\times{\R}^{n+1}$, $ v_1,v_2\in\mathcal V$ and $t\ge 0$, if $v_1=v_2$ a.e. on $[0,t]$, then
			$$A(T,z,v_1)=A(T,z,v_2) \txt{ a.e. on }[0,t+\tau].$$
	\end{enumerate}	 
	SNAD strategies for Player 2 are defined symmetrically.
\end{mdef}
We denote by $\mathcal A_s$ the set of SNAD strategies for Player 1, and $\mathcal B_s$ denotes the set of SNAD strategies for Player 2.
\begin{mrmk}
	 It is clear that NAD strategies can be viewed as SNAD strategies. Thus,  we have $\mathcal{A}_d\subset\mathcal{A}_s$ and $\mathcal{B}_d\subset\mathcal{B}_s$.
\end{mrmk}
\begin{mrmk}
	 By playing an SNAD strategy $A\in\mathcal{A}_s$, Player 1 will employ the admissible control $u=A(T,z,v)$ in a non-anticipative manner if the data $z$ is publicly revealed at the moment $t=T$ and his/her adversary plays the control $v$. In this regard, Condition i) in Definition \ref{Def_SNAD} is essential. Indeed, players' actions should not depend on data that has not yet been revealed. In fact, by the lemma below, under SNAD strategies, the players follow a (unique) associated NAD strategy until after the current data is revealed.
\end{mrmk}

\begin{mlem}\label{SNAD_lem1}
	For all $A\in\mathcal{A}_s$, there exists a unique $\alpha_A\in \mathcal{A}_d$ such that, $\forall(T,z)\in\R_+\times {\R}^{n+1}$, $\forall v\in\mathcal V$, and $\forall \tau>0$ a delay of $A$,
\begin{equation}\label{eq_SNAD_lem1}
	\alpha_A(v)=A(T,z,v),\txt{ a.e. on }[0,T+\tau].
\end{equation}
	Symmetrically, for any $B\in\mathcal{B}_s$, there exists a unique $\beta_B\in \mathcal{B}_d$ such that, $\forall(T,z)\in\R_+\times {\R}^{n+1}$, $\forall u\in\mathcal U$, and $\forall \tau>0$ a delay of $B$,
\begin{equation}\label{eq_SNAD_lem2}
	\beta_B(u)=B(T,z,u),\txt{ a.e. on }[0,T+\tau].
\end{equation}
\end{mlem}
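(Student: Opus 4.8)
The plan is to build $\alpha_A$ by gluing together the restrictions of the controls $A(T,z,v)$ over the time intervals on which, by condition i) of Definition \ref{Def_SNAD}, they are forced to coincide. Fix a delay $\tau>0$ of $A$ and an arbitrary reference point $z_0\in\R^{n+1}$. For each $v\in\mathcal V$ and integers $m\ge n$, condition i) (applied with $T_1=m$, $T_2=n$ and both spatial arguments equal to $z_0$) gives $A(m,z_0,v)=A(n,z_0,v)$ a.e. on $[0,n+\tau]$. Thus the family $\{A(n,z_0,v)|_{[0,n+\tau]}\}_{n\in\N}$ is consistent, and since $n+\tau\to\infty$ it determines a unique measurable control $\alpha_A(v)\in\mathcal U$ characterized by $\alpha_A(v)=A(n,z_0,v)$ a.e. on $[0,n+\tau]$ for every $n$. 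Invoking condition i) once more while varying the spatial argument shows that this definition is independent of the choice of $z_0$, and comparing the gluings associated with two delays shows it is independent of $\tau$ as well.

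Next I would verify that $\alpha_A$ is a well-defined NAD strategy. For the measurability of $v\mapsto\alpha_A(v)$, I use that the $L^1_{loc}$-topology on $\mathcal U$ is generated by the restriction maps to bounded intervals $[0,S]$; on each such interval $\alpha_A(v)$ coincides with $A(n,z_0,v)$ for $n$ large, and $v\mapsto A(n,z_0,v)$ is the composition of the Borel map $A$ with the continuous embedding $v\mapsto(n,z_0,v)$, hence Borel. Therefore $\alpha_A$ is Borel measurable. For the delay property, if $v_1=v_2$ a.e. on $[0,t]$, then condition ii) gives $A(n,z_0,v_1)=A(n,z_0,v_2)$ a.e. on $[0,t+\tau]$ for any $n\ge t$, whence $\alpha_A(v_1)=\alpha_A(v_2)$ a.e. on $[0,t+\tau]$, so $\alpha_A\in\mathcal A_d$ with the same delay $\tau$.

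The agreement \eqref{eq_SNAD_lem1} then follows directly from condition i): for any $(T,z)\in\R_+\times\R^{n+1}$, any $v\in\mathcal V$, and any delay $\tau$, choosing $n\ge T$ yields $A(T,z,v)=A(n,z_0,v)=\alpha_A(v)$ a.e. on $[0,T+\tau]$, where the first equality is condition i) with $(T_1,z_1)=(T,z)$ and $(T_2,z_2)=(n,z_0)$ (so that $T_1\wedge T_2=T$), and the second holds by construction. Since condition i) is applied with the given delay, the identity holds for every admissible $\tau$. Uniqueness is immediate: if $\alpha,\alpha'\in\mathcal A_d$ both satisfy \eqref{eq_SNAD_lem1}, then $\alpha(v)=A(T,z,v)=\alpha'(v)$ a.e. on $[0,T+\tau]$ for every $T$, and letting $T\to\infty$ gives $\alpha(v)=\alpha'(v)$ for all $v$, i.e. $\alpha=\alpha'$. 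The dual statement \eqref{eq_SNAD_lem2} for Player 2 is obtained by the symmetric argument.

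The step I expect to require the most care is the measurability of the gluing, namely checking that the diagonal construction $v\mapsto\alpha_A(v)$ is genuinely Borel as a map into $(\mathcal U,L^1_{loc})$, together with confirming that the resulting object does not secretly depend on the auxiliary choices of $z_0$ and $\tau$; the remaining properties (consistency, delay, agreement, uniqueness) are then routine consequences of conditions i) and ii).
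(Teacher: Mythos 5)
Your proposal is correct and follows essentially the same route as the paper: the paper likewise glues the controls $A(n,\mathbf{0},v)$ along increasing horizons (truncating past time $n$ with a constant action), uses condition i) for consistency and the agreement \eqref{eq_SNAD_lem1}, condition ii) for the delay property, obtains Borel measurability of $v\mapsto\alpha_A(v)$ from the measurability of $v\mapsto A(n,\mathbf{0},v)$ on bounded intervals (phrased there as an $L^1_{loc}$ pointwise limit), and proves uniqueness by letting the horizon tend to infinity. The only differences are cosmetic: your explicit checks of independence from the auxiliary choices $z_0$ and $\tau$, which the paper leaves implicit.
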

\begin{proof}
	We only prove the first claim and the second can be verified in a symmetrical manner. Given any SNAD strategy $A\in\mathcal A_s$ and its delay $\tau>0$, we define a sequence of maps $\{\alpha_n:\mathcal V\to\mathcal U\}_{n\in\mathbb N_*}$, by setting for all $n\in\N$ and some constant action parameter $u_0\in\mathbb U$:
$$\alpha_n(v)(t)=\left\{
\begin{aligned}
	&A(n,\textbf{0},v)(t), &t\in[0,n],\\
	&u_0, &t>n,
\end{aligned}\right.$$
	It is clear that $\alpha_n(v)\in\mathcal U$, and as the composition of measurable maps, $\alpha_n:\mathcal V\to\mathcal U$ is Borel measurable. Furthermore, since $A(n,\textbf{0},\cdot)\in\mathcal A_d$, for any $v,v'\in\mathcal V$ verifying $v=v'$ a.e. on $[0,t]$ for some $t>0$, one has
$$\alpha_n(v)=\alpha_n(v')\text{ a.e. on }[0,(t+\tau)\wedge n]$$
and $a_n(v)=a_n(v')$ on $[n,+\infty)$. Thus $\alpha_n\in\mathcal A_d$, $\forall n\in\mathbb{N}$.

	Let us define for all $v\in\mathcal V$, a control $\alpha_A(v)$ by setting $\alpha_A(v)(t)=\alpha_n(v)(t)$, for all $n\in\N$ and $t\in[n,n+1)$. One can check clear that the control $\alpha_A(v)$ is well defined and admissible. Since $\alpha_n(v)$ and $\alpha_m(v)$ coincide a.e. on $[0,n\wedge m]$, and for any $N\in\mathbb N_*$
$$\lim_{n\to\infty}\int^N_0 d_\mathbb U\big(\alpha_n(v)(t),\alpha_A(v)(t)\big)dt=0.$$
	Thus, as the point-wise limit of measurable maps $\alpha_n$, $\alpha_A:\mathcal{V}\to\mathcal{U}$ is Borel measurable. In addition, $\forall v,v'\in\mathcal V$ and $t>0$, if $v=v'$ a.e. on $[0,t]$, one has, with $n(t)=\lceil t+\tau\rceil$,
$$\alpha_A(v)=\alpha_{n(t)}(v)=\alpha_{n(t)}(v')=\alpha_A(v'),\text{ a.e. on }[0,t+\tau].$$
	Consequently, $\alpha_A\in\mathcal A_d$, and $\forall(T,z,v)\in \R_+\times{\R}^{n+1}\times\mathcal{V}$,
$$\alpha_A(v)=\alpha_{n(T)}(v)=A(n(T),\textbf{0},v)=A(T,z,v)\text{ a.e. on }[0,T+\tau].$$
	It remains thus to check the uniqueness. If $\alpha_A$, $\alpha_A'\in\mathcal A_d$ are NAD strategies for Player 1 both verifying \eqref{eq_SNAD_lem1}, we have for all $N\in\N$ and for all $v\in\mathcal{V}$,
	$$\alpha_A(v)=A(N,\textbf{0},v)=\alpha_A'(v)\text{ a.e. on }[0,N].$$
	Therefore, for all $v\in\mathcal V$, $\alpha_A(v)=\alpha_A'(v)$ a.e. on $\R_+$ . The proof is complete.
\end{proof}
\begin{mrmk}
	An SNAD strategy $A\in\mathcal A_s$ for Player 1 in game $\mathcal G(\mu_0)$ can be viewed as being composed of the NAD strategy $\alpha_A$ and a collection of NAD strategies $(\alpha_{T,z})\in \mathcal A_d^{\R_+\times{\R}^{n+1}}$ with
	$$\alpha_{T,z}(v)=A(T,z,v)(\cdot+T),\ \forall v\in \mathcal{V}.$$
Heuristically, by choosing an SNAD strategy in game $\mathcal G(\mu_0)$, the player first plays the associated NAD strategy until the signal revelation, and he or she then chooses a new NAD strategy from the collection for the sub-game with complete information. 
\end{mrmk}
	As Lemma \ref{SNAD_lem2} below indicates, SNAD strategies possess a similar property to NAD strategies, which allows us to write game $\mathcal G(\mu_0)$ in normal form.
\begin{mlem}\label{SNAD_lem2}
	Given a pair of SNAD strategies $(A,B)\in\mathcal{A}_s\times\mathcal{B}_s$, for any $z_0=(x_0,y_0)\in\R^{n+1}$, there exists a unique pair of admissible controls $(u_{z_0},v_{z_0})\in\mathcal U\times\mathcal V$ such that
	$$A\Big(\mathcal{T}(z_0,u_{z_0},v_{z_0}),Z^{z_0,u_{z_0},v_{z_0}}_{\mathcal{T}(z_0,u_{z_0},v_{z_0})},v_{z_0}\Big)=u_{z_0}\txt{ and }B\Big(\mathcal{T}(z_0,u_{z_0},v_{z_0}),Z^{z_0,u_{z_0},v_{z_0}}_{\mathcal{T}(z_0,u_{z_0},v_{z_0})},u_{z_0}\Big)=v_{z_0}.$$
	Moreover, the map $z_0\mapsto (u_{z_0},v_{z_0})$ is Borel measurable.
\end{mlem}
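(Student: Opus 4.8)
The plan is to decompose the resolution into a pre-revelation phase and a post-revelation phase, reducing each to the fixed-point property of ordinary NAD strategies in Lemma \ref{SNAD_lem1B}. Let $\tau>0$ be a common delay of $A$ and $B$. By Lemma \ref{SNAD_lem1}, associate to $A$ and $B$ their NAD strategies $\alpha_A\in\mathcal A_d$ and $\beta_B\in\mathcal B_d$, and let $(\bar u,\bar v)\in\mathcal U\times\mathcal V$ be the unique pair given by Lemma \ref{SNAD_lem1B} with $\alpha_A(\bar v)=\bar u$ and $\beta_B(\bar u)=\bar v$; these pre-revelation controls do not depend on any revealed data. Set $T_0:=\mathcal T(z_0,\bar u,\bar v)$ and $\bar z:=Z^{z_0,\bar u,\bar v}_{T_0}$. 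For this fixed $(T_0,\bar z)$, condition ii) of Definition \ref{Def_SNAD} shows that $v\mapsto A(T_0,\bar z,v)$ and $u\mapsto B(T_0,\bar z,u)$ are NAD strategies with delay $\tau$, so Lemma \ref{SNAD_lem1B} applied to this pair produces a unique $(u_{z_0},v_{z_0})$ with $A(T_0,\bar z,v_{z_0})=u_{z_0}$ and $B(T_0,\bar z,u_{z_0})=v_{z_0}$. This pair is my candidate.

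The crucial verification is that $(u_{z_0},v_{z_0})$ is consistent with the data $(T_0,\bar z)$ used to define it, i.e.\ that $\mathcal T(z_0,u_{z_0},v_{z_0})=T_0$ and $Z^{z_0,u_{z_0},v_{z_0}}_{T_0}=\bar z$. By Lemma \ref{SNAD_lem1}, $u_{z_0}=A(T_0,\bar z,v_{z_0})=\alpha_A(v_{z_0})$ and $v_{z_0}=\beta_B(u_{z_0})$ a.e.\ on $[0,T_0+\tau]$. Comparing with $\bar u=\alpha_A(\bar v)$ and $\bar v=\beta_B(\bar u)$, I run the standard induction over the intervals $[0,k\tau]$: on $[0,\tau]$ the NAD delay property makes $\alpha_A,\beta_B$ insensitive to their inputs, and agreement on $[0,k\tau]$ propagates to $[0,(k+1)\tau]$, yielding $u_{z_0}=\bar u$ and $v_{z_0}=\bar v$ a.e.\ on $[0,T_0+\tau]$. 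Hence the trajectories $Z^{z_0,u_{z_0},v_{z_0}}_\cdot$ and $Z^{z_0,\bar u,\bar v}_\cdot$ coincide on $[0,T_0]$; since $g>0$ below $M_0$ forces the target to be crossed cleanly at the first hit, both controls share the hitting time $T_0$ and the hitting state $\bar z$. Substituting this back turns the defining equations of $(u_{z_0},v_{z_0})$ into exactly the two equations in the statement.

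For uniqueness, suppose $(u',v')$ solves the system and set $T':=\mathcal T(z_0,u',v')$, $z':=Z^{z_0,u',v'}_{T'}$. Then $u'=A(T',z',v')=\alpha_A(v')$ and $v'=\beta_B(u')$ a.e.\ on $[0,T'+\tau]$, so the same induction forces $(u',v')=(\bar u,\bar v)$ on $[0,T']$; the trajectory of $(u',v')$ then agrees with that of $(\bar u,\bar v)$ up to $T'$, and uniqueness of the first crossing gives $T'=T_0$ and $z'=\bar z$. The equations become $u'=A(T_0,\bar z,v')$ and $v'=B(T_0,\bar z,u')$, and the uniqueness clause of Lemma \ref{SNAD_lem1B} applied to $A(T_0,\bar z,\cdot),B(T_0,\bar z,\cdot)$ yields $(u',v')=(u_{z_0},v_{z_0})$.

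Finally, for the Borel measurability of $z_0\mapsto(u_{z_0},v_{z_0})$ I factor the construction through $z_0\mapsto(T_0,\bar z)\mapsto(u_{z_0},v_{z_0})$. Since $(\bar u,\bar v)$ is fixed, $z_0\mapsto T_0=\mathcal T(z_0,\bar u,\bar v)$ is continuous by the hitting-time lemma, and $z_0\mapsto\bar z=Z^{z_0,\bar u,\bar v}_{\mathcal T(z_0,\bar u,\bar v)}$ is Borel measurable by the measurability statement recorded just after that lemma. It then remains to show that the parametrized fixed-point map $(T,z)\mapsto(u^\ast_{T,z},v^\ast_{T,z})$, the fixed point of $A(T,z,\cdot),B(T,z,\cdot)$ provided by Lemma \ref{SNAD_lem1B}, is Borel measurable, and this is the main obstacle. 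I would obtain it by writing the fixed point as the $L^1_{loc}$-limit of the iteration $u^{(k)}_{T,z}=A(T,z,v^{(k-1)}_{T,z})$, $v^{(k)}_{T,z}=B(T,z,u^{(k-1)}_{T,z})$ started from constant controls: the delay property gives $u^{(k)}_{T,z}=u^\ast_{T,z}$ a.e.\ on $[0,k\tau]$, each iterate is Borel measurable in $(T,z)$ because $A,B$ are jointly Borel, and a pointwise limit of Borel maps into the Polish spaces $\mathcal U,\mathcal V$ is Borel. Composing the two measurable maps gives the claim.
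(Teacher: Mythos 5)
Your proposal is correct and follows essentially the same route as the paper: both reduce everything to the NAD fixed-point Lemma \ref{SNAD_lem1B} via the associated pair $(\alpha_A,\beta_B)$ from Lemma \ref{SNAD_lem1}, freeze the revelation data $(T_0,\bar z)$ computed from the pre-revelation controls, obtain uniqueness from Lemma \ref{SNAD_lem1B} applied to $\big(A(T_0,\bar z,\cdot),B(T_0,\bar z,\cdot)\big)$, and prove Borel measurability by writing the solution as a pointwise $L^1_{loc}$-limit of Borel iterates. The only organizational difference is that the paper constructs the post-revelation fixed point by the same delay-by-delay extension that you use for your consistency check and measurability iteration, so your direct appeal to Lemma \ref{SNAD_lem1B} for existence, followed by the explicit interval induction verifying that the fixed point's own hitting time and hitting state reproduce $(T_0,\bar z)$, is a clean repackaging of the paper's induction rather than a new argument.
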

\begin{proof}
	By Lemma \ref{SNAD_lem1}, there exists a unique pair of NAD strategies $(\alpha_A,\beta_B)\in\mathcal A_d\times \mathcal B_d$ such that for any $(T,z)\in{\R}_+\times\R^{n+1}$,
	$$\forall(u,v)\in\mathcal U\times\mathcal V,\ A(T,z,v)=\alpha_A(v), B(T,z,u)=\beta_B(u)\text{ a.e. on }[0,T+\tau].$$
	Here $\tau>0$ is the common delay of $A$ and $B$. Moreover, by Lemma \ref{SNAD_lem1B}, there exists a unique pair of admissible controls $(u_{AB},v_{AB})$ such that
	$$\alpha(v_{AB})=u_{AB}\text{ and }\beta(u_{AB})=v_{AB}\text{ a.e. on }\mathbb R_+$$	
	Let us denote by $\mathcal T(z_0)$ the hitting time $\mathcal T(z_0,u_{AB},v_{AB})$ when the pair of NAD strategies $(\alpha_A,\beta_B)$ is employed by the players in game $\mathcal{G}(\mu_0)$. It follows that for any $z_0,z\in\mathbb R^{n+1}$,
	$$A(\mathcal T(z_0),z,v_{AB})=u_{AB},\ B(\mathcal T(z_0),z,u_{AB})=v_{AB},\text{ a.e. on }[0,\mathcal T(z_0)+\tau].$$
	It follows again from Lemma \ref{SNAD_lem1B} (applied to $(A(T,z,\cdot),B(T,z,\cdot))\in\mathcal{A}_d\times\mathcal{B}_d$) that $(u_{AB},v_{AB})$ is the unique pair of admissible controls with this property. In other words, if the pair of SNAD strategy $(A,B)$ is played in game $\mathcal G(\mu_0)$, the revelation time $\mathcal T$ and the revealed signal are determined by the continuous map:
	\begin{align*}
		z_0\mapsto(\mathcal T(z_0),Z^{z_0,u_{AB},v_{AB}}_{\mathcal T(z_0)}).
	\end{align*}
	For simplicity, we write $Z^{z_0}_{\mathcal{T}}:=Z^{z_0,u_{AB},v_{AB}}_{\mathcal T(z_0)}$ for the rest of the proof.
	\paragraph*{} 
	To prove the lemma, we further extend the pair $(u_{AB},v_{AB})\big|_{[0,\mathcal T(z_0)+\tau]}$ to $\mathbb{R}_+$. More precisely, we prove by induction that, for all $n\in\N$, there exists a pair of admissible controls $(u_{z_0}^n,v^n_{z_0})$ such that the following equations hold almost everywhere on $[0,\mathcal T(z_0)+n\tau]$:
	\begin{equation}\label{eq_SNAD_lem2A}
		A(\mathcal T(z_0),Z^{z_0}_{\mathcal{T}},v^n_{z_0})=u^n_{z_0}\txt{ and }B(\mathcal T(z_0),Z^{z_0}_{\mathcal{T}},u^n_{z_0})=v^n_{z_0}
	\end{equation}
	Let us fix an arbitrary pair of control parameters $(u_0,v_0)\in\mathbb{U}\times \mathbb{V}$. We define $(u^1_{z_0},v^1_{z_0})\in\mathcal U\times \mathcal V$ by setting
	\begin{align*}
		&u^1_{z_0}(t)=\left\{\begin{aligned}
			&u_{AB}(t),&t\in[0,\mathcal T(z_0)+\tau],\\
			&u_0,&\text{else.}
		\end{aligned}\right.\\
		&v^1_{z_0}(t)=\left\{\begin{aligned}
			&v_{AB}(t),&t\in[0,\mathcal T(z_0)+\tau],\\
			&v_0,&\text{else.}
		\end{aligned}\right.
	\end{align*}
	By the NAD property of $A$ and $B$, it follows that 
	$$\begin{aligned}
		A(\mathcal T(z_0),Z^{z_0}_{\mathcal{T}},v^1_{z_0})=A(\mathcal 				T(z_0),Z^{z_0}_{\mathcal{T}},v_{AB})=u_{AB}=u^1_{z_0},\text{ a.e. on }[0,\mathcal T(z_0)+\tau],\\ B(\mathcal T(z_0),Z^{z_0}_{\mathcal{T}},u^1_{z_0})=B(\mathcal T(z_0),Z^{z_0}_{\mathcal{T}},u_{AB})=v_{AB}=v^1_{z_0},\text{ a.e. on }[0,\mathcal T(z_0)+\tau].
	\end{aligned}$$
	\paragraph*{}
	Assume that for some $k\ge 1$, there exists a pair of admissible controls $(u_{z_0}^k,v^k_{z_0})$ such that
	\begin{align*}
		A(\mathcal T(z_0),Z^{z_0}_{\mathcal{T}},v^k_{z_0})=u^k_{z_0}, B(\mathcal T(z_0),Z^{z_0}_{\mathcal{T}},u^k_{z_0})=v^k_{z_0},\text{ a.e. on }[0,\mathcal T(z_0)+k\tau].
	\end{align*}
	Let us construct the pair of controls $(u^{k+1}_{z_0},v^{k+1}_{z_0})\in\mathcal U\times \mathcal V$ by setting
	\begin{align*}
		&u^{k+1}_{z_0}(t)=\left\{\begin{aligned}
			&A(\mathcal T(z_0),Z^{z_0}_{\mathcal{T}},v^k_{z_0})(t),&t\in[0,\mathcal T(z_0)+(k+1)\tau],\\
			&u_0,&\text{else,}
		\end{aligned}\right.\\
		&v^{k+1}_{z_0}(t)=\left\{\begin{aligned}
			&B(\mathcal T(z_0),Z^{z_0}_{\mathcal{T}},u^k_{z_0})(t),&t\in[0,\mathcal T(z_0)+(k+1)\tau],\\
			&v_0,&\text{else.}
		\end{aligned}\right.
	\end{align*}
	By the above construction of $(u^{k+1}_{z_0},v^{k+1}_{z_0})$ and the NAD property of $A$ and $B$, one has $(u^{k+1}_{z_0},v^{k+1}_{z_0})=(u^{k}_{z_0},v^{k}_{z_0})$ a.e. on $[0,\mathcal T(z_0)+k\tau]$, and in addition,
	\begin{align*}
		A(\mathcal T(z_0),Z^{z_0}_{\mathcal{T}},v^{k+1}_{z_0})&=A(\mathcal T(z_0),Z^{z_0}_{\mathcal{T}},v^{k}_{z_0})=u^{k+1}_{z_0},\txt{ a.e. on }[0,\mathcal T(z_0)+(k+1)\tau];\\ 
		B(\mathcal T(z_0),Z^{z_0}_{\mathcal{T}},u^{k+1}_{z_0})&=B(\mathcal T(z_0),Z^{z_0}_{\mathcal{T}},u^{k}_{z_0})=v^{k+1}_{z_0},\txt{ a.e. on }[0,\mathcal T(z_0)+(k+1)\tau].
	\end{align*}
	Therefore equations \eqref{eq_SNAD_lem2A} hold for all $n\in{\N}_*$. Let us define
	\begin{align*}
		u_{z_0}(t)&=\left\{\begin{aligned}
			&u_{AB}(t),&t\in[0,\mathcal T(z_0)),\\
			&u^{n}_{z_0}(t),&t\in[T(z_0)+(n-1)\tau,\mathcal T(z_0)+n\tau),
		\end{aligned}\right.\\
		v_{z_0}(t)&=\left\{\begin{aligned}
			&v_{AB}(t),&t\in[0,\mathcal T(z_0)),\\
			&v^{n}_{z_0}(t),&t\in[T(z_0)+(n-1)\tau,\mathcal T(z_0)+n\tau).
		\end{aligned}\right.
	\end{align*}
	By our construction, $(u_{z_0},v_{z_0})=(u^n_{z_0},v^n_{z_0})$ a.e. on $[0,\mathcal T(z_0)+n\tau]$, for all $n\in \N$, and one can check that $(u_{z_0},v_{z_0})\in\mathcal U\times\mathcal V$.

	The uniqueness of $(u_{z_0},v_{z_0})\in\mathcal U\times\mathcal V$ verifying \eqref{eq_SNAD_lem2A} for all $n\in\mathbb N^*$ follows from Lemma \ref{SNAD_lem1B} and from the fact that $(A(\mathcal T(z_0),Z^{z_0}_{\mathcal{T}},\cdot),B(\mathcal T(z_0),Z^{z_0}_{\mathcal{T}},\cdot))\in\mathcal A_d\times\mathcal B_d$.

	To prove the Borel measurability of $z_0\mapsto(u_{z_0},v_{z_0})$, we notice that the map 
	$$z_0\mapsto (\mathcal{T}(z_0),Z^{z_0,u_{AB},v_{AB}}_{\mathcal T(z_0)},u_{AB},v_{AB})$$
	is continuous. Then, one can show by induction on $n\in\mathbb N^*$ that, as finite composition of continuous maps and measurable maps, $z_0\mapsto (u^{n}_{z_0},v^n_{z_0})$ is measurable for all $n\in{\N}_*$. But by the definition of $(u_{z_0},v_{z_0})$, we have
	$$(u^{n}_{z_0},v^n_{z_0})\xrightarrow{L^1_{loc}}(u_{z_0},v_{z_0}).$$ 
	Thus, as the pointwise limit of measurable maps, the map $z_0\mapsto (u_{z_0},v_{z_0})$ is also measurable. The proof is complete.
\end{proof}

\subsection{Value Functions}
	In view of Lemma \ref{SNAD_lem2} from the previous subsection, for any pair of SNAD strategies $(A,B)\in\mathcal A_s\times\mathcal B_s$, we denote by $(A,B)$ the Borel measurable map 
\begin{align*}
	(A,B):\mathbb R^{n+1}&\to\mathcal U\times\mathcal V\\
	z_0&\mapsto (u_{z_0},v_{z_0})
\end{align*}
	We are able to associate to each pair of SNAD strategies $(A,B)$ and initial state $z_0\in\mathbb R^{n+1}$ the payoff:
\begin{equation*}
		J(z_0,A,B)=J(z_0,u_{z_0},v_{z_0}).
\end{equation*}
	Moreover, for any $\mu_0\in\mathcal{P}(\mathbb{R}^{n+1})$, we can write the game $\mathcal G(\mu_0)$ in normal form by associating to each pair of SNAD strategies $(A,B)$ the expectation of cost:
\begin{equation*}
		J(\mu_0,A,B)=\int_{{\R}^{n+1}}J(z,(A,B)(z))d\mu_0(z),
\end{equation*}
	The upper and lower values of game $\mathcal G(\mu_0)$ are thus given by
\begin{equation}\label{eq_NF_VF}
	V^+(\mu_0)=\inf_{A\in\mathcal A_s}\sup_{B\in\mathcal B_s}J(\mu_0,A,B)\text{ and }V^-(\mu_0)=\sup_{B\in\mathcal B_s}\inf_{A\in\mathcal A_s}J(\mu_0,A,B).
\end{equation}
	It follows immediately from the above definition of $V^\pm$ that $V^+\ge V^-$. In particular, if $\mu_0=\delta_{z_0}$ is the Dirac mass at $z_0\in\mathbb R^{n+1}$, we write $V^\pm(z_0):=V^\pm(\delta_{z_0})$. Let us show that in this case, the value functions of $\mathcal{G}(\delta_{z_0})$ coincide with those of the corresponding differential game with complete information.
\begin{mlem}\label{lem_value1}
	For any $z_0\in\mathbb{R}^{n+1}$, 
	\begin{align}
		V^+(z_0)&=\inf_{\alpha\in\mathcal{A}_d}\sup_{v\in\mathcal{V}}J(z_0,\alpha(v),v);\\ \label{eq_lem_value1}
		V^-(z_0)&=\sup_{\beta\in\mathcal{B}_d}\inf_{u\in\mathcal{U}}J(z_0,u,\beta(v)).
	\end{align}
\end{mlem}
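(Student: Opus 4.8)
The plan is to prove the identity for $V^+$; the one for $V^-$ then follows by an entirely symmetric argument (exchanging the roles of the two players and of $\inf/\sup$), so I would only carry out the $V^+$ case in detail. Write $W^+(z_0):=\inf_{\alpha\in\mathcal A_d}\sup_{v\in\mathcal V}J(z_0,\alpha(v),v)$ for the upper value of the associated complete-information game. I would establish the two inequalities $V^+(z_0)\le W^+(z_0)$ and $V^+(z_0)\ge W^+(z_0)$ separately, exploiting throughout the fact that, since $\mu_0=\delta_{z_0}$ is a Dirac mass, the revealed signal $(\mathcal T(z_0),Z^{z_0}_{\mathcal T})$ is a deterministic function of the controls actually played, so that signal-dependence confers no genuine informational advantage.

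For $V^+(z_0)\le W^+(z_0)$, I would embed each NAD strategy into the SNAD framework. Given $\alpha\in\mathcal A_d$, define $A_\alpha\in\mathcal A_s$ by $A_\alpha(T,z,v):=\alpha(v)$, independent of the signal variables $(T,z)$; this lies in $\mathcal A_s$ because conditions i) and ii) of Definition \ref{Def_SNAD} reduce to the NAD property of $\alpha$. For any $B\in\mathcal B_s$, the fixed-point equation of Lemma \ref{SNAD_lem2} together with the signal-independence of $A_\alpha$ forces $u_{z_0}=\alpha(v_{z_0})$, whence $J(z_0,(A_\alpha,B)(z_0))=J(z_0,\alpha(v_{z_0}),v_{z_0})\le\sup_{v\in\mathcal V}J(z_0,\alpha(v),v)$. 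Taking the supremum over $B\in\mathcal B_s$ and then the infimum over $\alpha\in\mathcal A_d$ yields $V^+(z_0)\le W^+(z_0)$.

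For the reverse inequality, I would fix an arbitrary $A\in\mathcal A_s$ and, for each $v\in\mathcal V$, test it against the constant SNAD strategy $B_v\in\mathcal B_s$ defined by $B_v(T,z,u):=v$. Its associated NAD strategy is $\beta_{B_v}\equiv v$, so the construction in Lemma \ref{SNAD_lem2} gives $v_{z_0}=v$ throughout, while Lemma \ref{SNAD_lem1} makes the pre- and post-revelation pieces consistent, so that $u_{z_0}=A(\mathcal T_v,Z^{z_0}_{\mathcal T_v},v)$ on all of $\R_+$, where $\mathcal T_v:=\mathcal T(z_0,\alpha_A(v),v)$. Setting $\alpha'(v):=A(\mathcal T_v,Z^{z_0}_{\mathcal T_v},v)$, one gets $\sup_{B\in\mathcal B_s}J(z_0,(A,B)(z_0))\ge J(z_0,(A,B_v)(z_0))=J(z_0,\alpha'(v),v)$ for every $v$, hence $\ge\sup_{v\in\mathcal V}J(z_0,\alpha'(v),v)$. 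Provided $\alpha'\in\mathcal A_d$, the right-hand side is $\ge W^+(z_0)$, and taking the infimum over $A$ finishes the proof.

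The main obstacle is precisely verifying that $\alpha'$ is a legitimate NAD strategy. Measurability of $v\mapsto\alpha'(v)$ would follow by writing it as the composition of the Borel map $A$ with the Borel map $v\mapsto(\mathcal T_v,Z^{z_0}_{\mathcal T_v},v)$, using the continuity of $(u,v)\mapsto\mathcal T(z_0,u,v)$, the Borel measurability of $(z_0,u,v)\mapsto Z^{z_0,u,v}_{\mathcal T(z_0,u,v)}$ recorded in Section \ref{sect_PR}, and the measurability of $\alpha_A$. For the non-anticipativity with delay I would fix $v_1=v_2$ a.e.\ on $[0,t]$ and argue by cases on the revelation time: if $\mathcal T_{v_1}\le t$, then since $\alpha_A(v_1)=\alpha_A(v_2)$ on $[0,t+\tau]$ the two pre-revelation trajectories coincide on $[0,t]$, forcing $\mathcal T_{v_1}=\mathcal T_{v_2}$ and $Z^{z_0}_{\mathcal T_{v_1}}=Z^{z_0}_{\mathcal T_{v_2}}$, after which the NAD property of $A(\mathcal T_{v_1},Z^{z_0}_{\mathcal T_{v_1}},\cdot)$ gives $\alpha'(v_1)=\alpha'(v_2)$ on $[0,t+\tau]$; if instead $\mathcal T_{v_1}>t$ (which, by the same trajectory-coincidence argument, forces $\mathcal T_{v_2}>t$ as well), then Lemma \ref{SNAD_lem1} gives $\alpha'(v_i)=\alpha_A(v_i)$ on $[0,t+\tau]$ and the NAD property of $\alpha_A$ closes the case. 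Thus $\alpha'\in\mathcal A_d$ with the same delay $\tau$ as $A$, completing the harder inequality; the lower value $V^-$ is then handled by the mirror-image argument, testing an arbitrary $B\in\mathcal B_s$ against the constant strategies $A_u(T,z,v):=u$ for Player~1.
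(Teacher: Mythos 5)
Your proposal is correct and takes essentially the same route as the paper's own proof: the easy inequality via the embedding of NAD strategies into SNAD strategies (your $A_\alpha$ is exactly the inclusion $\mathcal{A}_d\subset\mathcal{A}_s$ used in the paper, combined with the fixed-point property of Lemma \ref{SNAD_lem2}), and the harder inequality by testing an SNAD strategy against constant controls $\mathcal{V}\subset\mathcal{B}_d\subset\mathcal{B}_s$ and verifying that $v\mapsto A\big(\mathcal{T}(z_0,\alpha_A(v),v),Z^{z_0,\alpha_A(v),v}_{\mathcal{T}(z_0,\alpha_A(v),v)},v\big)$ is a legitimate NAD strategy, which is precisely the paper's $\hat{\alpha}$, established there too by measurability-as-composition and the same two-case analysis on whether the revelation occurs before or after time $t$. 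No gaps.
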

\begin{proof}
	We only prove the first equation \eqref{eq_lem_value1}. Let $\bar\alpha\in\mathcal{A}_d\subset\mathcal{A}_s$ be an $\varepsilon$-optimal strategy for the right-hand side of \eqref{eq_lem_value1}, i.e.
	\begin{equation}
		\inf_{\alpha\in\mathcal{A}_d}\sup_{v\in\mathcal{V}}J(z_0,\alpha(v),v)+\varepsilon\ge \sup_{v\in\mathcal{V}}J(z_0,\bar\alpha(v),v).
	\end{equation}
	By Lemma \ref{SNAD_lem2}, for any $B\in\mathcal{B}_s$, we have 
	\begin{equation}
		J(z_0,(\bar\alpha,B)(z_0))=J(z_0,\bar{\alpha}(v_{z_0}),v_{z_0})
		\le \sup_{v\in\mathcal{V}}J(z_0,\bar{\alpha}(v),v)\le \inf_{\alpha\in\mathcal{A}_d}\sup_{v\in\mathcal{V}}J(z_0,\alpha(v),v)+\varepsilon.
	\end{equation}
	where $v_{z_0}$ are defined as in Lemma \ref{SNAD_lem2}. Taking the supremum over $B\in\mathcal{B}_s$ on both sides of the last inequality above yields
	\begin{equation}
		V^+(z_0)\le \sup_{B\in\mathcal{B}_s}J(z_0,(\bar\alpha,B)(z_0))\le \inf_{\alpha\in\mathcal{A}_d}\sup_{v\in\mathcal{V}}J(z_0,\alpha(v),v)+\varepsilon.
	\end{equation}
	Since $\varepsilon>0$ is arbitrary, let $\varepsilon\to 0+$, and we obtain 
	\begin{equation}
		V^+(z_0)\le\inf_{\alpha\in\mathcal{A}_d}\sup_{v\in\mathcal{V}}J(z_0,\alpha(v),v).
	\end{equation}
	To check the opposite inequality, let $\bar{A}\in\mathcal{A}_s$ be an $\varepsilon$-optimal strategy for $V^+(z_0)$. Since $\mathcal{V}\subset\mathcal{B}_d\subset\mathcal{B}_s$, let $\alpha_{\bar A}$ and $u_{z_0}$ be defined respectively as in Lemma \ref{SNAD_lem1} and Lemma \ref{SNAD_lem2}, and we have, 
	\begin{equation}\label{eq_lem_value2}
		\begin{aligned}
			V^+(z_0)+\varepsilon\ge &\sup_{B\in\mathcal{B}_s}J(z_0,(\bar{A},B)(z_0))\\
			\ge &\sup_{v\in\mathcal{V}}J\big(z_0,\bar{A}\big(\mathcal{T}(z_0,\alpha_{\bar A}(v),v),Z^{z_0,\alpha_{\bar A}(v),v}_{\mathcal{T}(z_0,\alpha_{\bar A}(v),v)},v\big),v\big).
		\end{aligned}
	\end{equation}
	It suffices thus to check that the map 
	$$v\mapsto \hat\alpha(v):=\bar{A}\big(\mathcal{T}(z_0,\alpha_{\bar A}(v),v),Z^{z_0,\alpha_{\bar A}(v),v}_{\mathcal{T}(z_0,\alpha_{\bar A}(v),v)},v\big)$$
	belongs to $\mathcal{A}_d$. As the composition of the SNAD strategy $\bar A$ and the measurable map 
	$$v\mapsto \Big(\mathcal{T}(z_0,\alpha_{\bar A}(v),v),Z^{z_0,\alpha_{\bar A}(v),v}_{\mathcal{T}(z_0,\alpha_{\bar A}(v),v)},v\Big),$$ 
	it is clear $\hat{\alpha}:\mathcal{V}\to\mathcal{U}$ is measurable. Fix any $v,v'\in\mathcal{V}$. Let $\tau>0$ be a delay of $\bar A$, and let us assume that $v=v'$ a.e. on $[0,t]$ for some $t\ge 0$.\\
	If $t<T:=\min(\mathcal{T}(z_0,\alpha_{\bar A}(v),v),\mathcal{T}(z_0,\alpha_{\bar A}(v'),v'))$, one has
	\begin{equation}
		\hat{\alpha}(v)=\alpha_{\bar A}(v)=\alpha_{\bar A}(v')=\hat{\alpha}(v),\txt{ a.e. on }[0,t+\tau].
	\end{equation}
	Otherwise, if $t\ge T$, then $\mathcal{T}(z_0,\alpha_{\bar A}(v),v)=\mathcal{T}(z_0,\alpha_{\bar A}(v'),v')$ and the equation below holds a.e. on $[0,t+\tau]$:
	\begin{equation}
		\begin{aligned}
			\hat{\alpha}(v)&=\bar{A}\big(\mathcal{T}(z_0,\alpha_{\bar A}(v),v),Z^{z_0,\alpha_{\bar A}(v),v}_{\mathcal{T}(z_0,\alpha_{\bar A}(v),v)},v\big)\\
			&=\bar{A}\big(\mathcal{T}(z_0,\alpha_{\bar A}(v),v),Z^{z_0,\alpha_{\bar A}(v),v'}_{\mathcal{T}(z_0,\alpha_{\bar A}(v),v)},v\big)=\hat{\alpha}(v').
		\end{aligned}
	\end{equation}
	Consequently, $\hat{\alpha}\in\mathcal{A}_d$ and \eqref{eq_lem_value2} implies 
	\begin{equation}
		V^+(z_0)+\varepsilon\ge \inf_{\alpha\in\mathcal{A}_d}\sup_{v\in\mathcal{V}}J(z_0,\alpha(v),v).
	\end{equation}
	The desired inequality follows by passing $\varepsilon\to 0+$ on both sides of the last inequality above. The proof is complete.
\end{proof}
	Let us recall the following result regarding the existence of value for
infinite horizon two-person zero-sum differential games of complete information. Such games have been well-studied, see for example \cite{book_B&CD,E&S}.
\begin{mlem}[\cite{book_B&CD}]\label{lem_comp_info}
	Under Isaacs' condition \eqref{eq_IC1}, $V^+(z_0)=V^-(z_0)$, $\forall z_0\in\mathbb R^{n+1}$.	In addition, the value function $z\mapsto V(z)=V^\pm(z)$ is H\"{o}lder continuous independent of $(u,v)$ with exponent $\gamma$:
	$$\gamma=\left\{\begin{aligned}
	&1,&\text{if }L<\lambda,\\
	&\text{any }\gamma<1,&\text{if }L=\lambda,\\
	&\lambda/L,&\text{if }L>\lambda,
\end{aligned}\right.$$
where $L=\max(L_F,L_\ell)$.
\end{mlem}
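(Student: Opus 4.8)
The plan is to reduce the assertion to the classical theory of two-player zero-sum differential games with \emph{complete} information, the bridge being Lemma \ref{lem_value1}. By that lemma, for a Dirac initial distribution the values of $\mathcal{G}(\delta_{z_0})$ coincide with the Elliott--Kalton values
$$V^+(z_0)=\inf_{\alpha\in\mathcal{A}_d}\sup_{v\in\mathcal{V}}J(z_0,\alpha(v),v),\qquad V^-(z_0)=\sup_{\beta\in\mathcal{B}_d}\inf_{u\in\mathcal{U}}J(z_0,u,\beta(u)),$$
so the signal mechanism plays no role and only the underlying complete-information game intervenes. The inequality $V^-\le V^+$ is already recorded after \eqref{eq_NF_VF}; the two substantive claims are therefore the H\"older regularity and the reverse inequality $V^+\le V^-$ under Isaacs' condition \eqref{eq_IC1}.

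I would settle the regularity first, as it is elementary given Lemma \ref{lem_costcont}. That lemma furnishes a constant $C>0$, independent of $(u,v)$, such that $|J(z,u,v)-J(z',u,v)|\le C\|z-z'\|^\gamma$ with $\gamma$ as stated. Since this modulus is uniform in the controls, it passes through the inf--sup: for every $\alpha\in\mathcal{A}_d$ and $v\in\mathcal{V}$ one has $J(z,\alpha(v),v)\le J(z',\alpha(v),v)+C\|z-z'\|^\gamma$, whence, taking $\sup_v$ and then $\inf_\alpha$, $V^+(z)\le V^+(z')+C\|z-z'\|^\gamma$; exchanging $z$ and $z'$ gives $|V^+(z)-V^+(z')|\le C\|z-z'\|^\gamma$. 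The same argument with the sup--inf applies verbatim to $V^-$, so both functions inherit exactly the exponent $\gamma$ and the constant $C$ of Lemma \ref{lem_costcont}.

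For the equality of the values I would follow the viscosity-solution route. One first establishes a dynamic programming principle for $V^+$ and $V^-$ over short horizons; here the delay of the NAD strategies together with Lemma \ref{SNAD_lem1B} makes the concatenated play admissible on each subinterval, while the estimates \eqref{eq_assum_reg1}--\eqref{eq_assum_reg2} and the continuity just established control the dependence on the intermediate state. From the dynamic programming principle one deduces in the usual way that $V^+$ and $V^-$ are bounded, uniformly continuous viscosity solutions of the stationary Hamilton--Jacobi--Isaacs equations $\lambda V=H^+(z,DV)$ and $\lambda V=H^-(z,DV)$ respectively, with $H^\pm$ as defined in Section \ref{sect_PR}; the delay is precisely what forces the player employing a strategy to commit first at the infinitesimal level, so that $V^+$ solves the equation with $H^+$ and $V^-$ the one with $H^-$. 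Assumptions \ref{Assum1} make $F$ bounded and Lipschitz in $z$ uniformly in $(u,v)$ and endow $\ell$ with the same regularity, so each Hamiltonian is Lipschitz in $\zeta$ and continuous in $z$ with the modulus needed for the comparison principle for bounded uniformly continuous solutions on $\mathbb{R}^{n+1}$, the discount $\lambda>0$ being exactly what makes comparison hold with no terminal or boundary data. Under Isaacs' condition \eqref{eq_IC1} one has $H^+\equiv H^-$, so $V^+$ and $V^-$ solve one and the same equation, and comparison forces $V^+=V^-$.

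The main obstacle is not the H\"older estimate but the step from the dynamic programming principle to the viscosity sub/supersolution identification, and the verification of comparison on the whole space. The delicate points are checking that the NAD-strategy formulation of $V^\pm$ obeys the same dynamic programming principle as the Elliott--Kalton values---this is where the delay $\tau$ and Lemma \ref{SNAD_lem1B} are indispensable in order to splice play across subintervals into admissible controls---and confirming the hypotheses of the comparison principle on an unbounded domain. Both are classical under Assumptions \ref{Assum1} and are exactly the content imported from \cite{book_B&CD,E&S}; accordingly I would carry out the reduction via Lemma \ref{lem_value1} and the regularity estimate in full detail, and invoke \cite{book_B&CD,E&S} for the sub/supersolution property and the comparison argument.
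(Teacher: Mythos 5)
Your proposal is correct and takes essentially the same route as the paper, which does not prove this lemma itself but imports it from \cite{book_B&CD}: your reduction via Lemma \ref{lem_value1} to the Elliott--Kalton values, the H\"older bound obtained by pushing the uniform modulus of Lemma \ref{lem_costcont} through the inf/sup (the mechanism of Lemma \ref{lem_regtech}), and the equality $V^+=V^-$ via dynamic programming, the stationary equations $\lambda V=H^{\pm}(z,DV)$, and comparison under Isaacs' condition \eqref{eq_IC1} is precisely the classical argument the citation refers to. Invoking \cite{book_B&CD,E&S} for the sub/supersolution identification and the comparison principle on $\mathbb{R}^{n+1}$ is exactly what the paper does, so no gap remains.
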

\begin{mrmk}\label{rmk_comp_info}
	Fix $z\in{\R}^{n+1}$. For all $\varepsilon>0$, let $\alpha_z\in\mathcal{A}_d$ (resp. $\beta_z\in\mathcal{B}_d$) be an $\varepsilon$-optimal strategy for $V^+(z_0)$ (resp. $V^-(z_0)$). As a direct consequence of the above Lemma \ref{lem_comp_info}, there exists $\delta_z>0$ such that $\alpha_z$ (resp. $\beta_z$) is still a $2\varepsilon$-optimal strategy for $V^+(\zeta)$ (resp. $V^-(\zeta)$) for any $\zeta\in B(z;\delta_z)$.
\end{mrmk}

\section{Properties of the value functions $V^\pm$}
In this section, we aim to establish our first main result, i.e. the existence of value for game $\mathcal G(\mu_0)$ under Isaacs' condition \eqref{eq_IC2}. It has already been established in \cite{Wu_DGAA19} that, for the case $\mu_0$ is of finite support, game $\mathcal G(\mu_0)$ has a value under Isaacs' condition \eqref{eq_IC2}.
\begin{mprps}[\cite{Wu_DGAA19}]\label{prps_mainfin}
	Assuming Isaacs' condition \eqref{eq_IC2} and that $\mu_0$ is the finite combination of Dirac masses, namely $\mu_0=\sum_{i=1}^I q_i\delta_{z^i_0}\in\mathcal{P}({\R}^{n+1})$ for some $I\in{\N}_*$, then game $\mathcal G(\mu_0)$ has a value: $$V^+(\mu_0)=V^-(\mu_0).$$
\end{mprps}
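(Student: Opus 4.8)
The plan is to reduce $\mathcal{G}(\mu_0)$ to a symmetric-information game carrying a terminal payoff at the controlled revelation time, and then to establish existence of value for that reduced game by a time-discretization argument, the integrated Isaacs condition being precisely what equalizes the upper and lower infinitesimal games.

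First I would exploit the fact that, once the current state is publicly revealed, the continuation is an ordinary two-player zero-sum differential game of complete information, which by Lemma \ref{lem_comp_info} has a value $V$ that is H\"older continuous. Fixing $\varepsilon$-optimal continuation NAD strategies in these sub-games (as provided by Remark \ref{rmk_comp_info}) and grafting them onto the pre-revelation play through the correspondence of Lemma \ref{SNAD_lem1}, I would show that both $V^+(\mu_0)$ and $V^-(\mu_0)$ coincide, up to $\varepsilon$, with the upper and lower values of a single \emph{pre-revelation} game in which each trajectory issued from $z_0^i$ runs under the common controls until its hitting time $\mathcal{T}(z_0^i)$ and incurs
\[
G(z_0^i,\alpha,\beta)=\int_0^{\mathcal{T}(z_0^i)} e^{-\lambda t}\,\ell\big(Z^{z_0^i,u_{\alpha\beta},v_{\alpha\beta}}_t,u_{\alpha\beta},v_{\alpha\beta}\big)\,dt+e^{-\lambda \mathcal{T}(z_0^i)}\,V\big(Z^{z_0^i}_{\mathcal{T}}\big).
\]
The point that makes the reduction work identically for the upper and the lower value is that the continuation game \emph{already has} a value, so both players can secure the same terminal cost $V$. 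By Lemma \ref{SNAD_lem1} the pre-revelation behavior of any SNAD pair is governed by a pair of ordinary NAD strategies $(\alpha,\beta)$, common across the support, which by Lemma \ref{SNAD_lem1B} determine a single pair of controls $(u_{\alpha\beta},v_{\alpha\beta})$; I would thus write the reduced upper and lower values as $W^{+}(\mu_0)=\inf_{\alpha}\sup_{\beta}\sum_{i=1}^I q_i\,G(z_0^i,\alpha,\beta)$ and $W^{-}(\mu_0)=\sup_{\beta}\inf_{\alpha}\sum_{i=1}^I q_i\,G(z_0^i,\alpha,\beta)$.

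To prove $W^{+}(\mu_0)=W^{-}(\mu_0)$ I would run the classical discretization scheme for symmetric-information games. After a discount-truncation of the horizon, I would partition $[0,T]$ into steps of size $\sigma$ and define discretized upper and lower values in which the players commit to their controls block by block with a small delay, consistent with the NAD structure. Because neither player knows the realized index $i$ before its revelation, on each block the players choose a single pair $(u,v)$ applied simultaneously to all still-unrevealed trajectories, so the infinitesimal game on a block is governed by the aggregate Hamiltonian $\sum_{i} q_i\big[F(z_i,u,v)\cdot p_i+\ell(z_i,u,v)\big]$. Specializing the Isaacs condition \eqref{eq_IC2} to $\mu_0=\sum_i q_i\delta_{z_0^i}$ and to continuous $p$ with prescribed values $p_i=p(z_0^i)$ yields exactly the finite-support equality \eqref{eq_IC4}, so the upper and lower infinitesimal games coincide. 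Propagating this through the dynamic programming recursion shows the discretized values agree for every $\sigma$, and passing to the limit $\sigma\to 0$ with the trajectory estimates \eqref{eq_assum_reg1} and \eqref{eq_assum_reg2}, the H\"older regularity of $V$, and the continuity of $\mathcal{T}$ gives $W^{+}(\mu_0)=W^{-}(\mu_0)$, hence $V^+(\mu_0)=V^-(\mu_0)$.

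The main obstacle is the interplay between the discretization and the staggered, control-dependent revelation times $\mathcal{T}(z_0^i)$: the target surface is reached at different instants for different $i$, so in $G$ the switch from the running cost to the terminal value $V$ occurs at $i$-dependent times that themselves move with the common controls. I would need to control how a revelation falling in the interior of a block perturbs the aggregate Hamiltonian, and to check that after each revelation the $\varepsilon$-optimal continuation strategies paste onto the pre-revelation play measurably and non-anticipatively. The local Lipschitz bound \eqref{lip_hittime} on $\mathcal{T}$ and the H\"older continuity of $V$ from Lemma \ref{lem_comp_info} are exactly the estimates that keep the discretization error uniform and legitimize the passage $\sigma\to 0$.
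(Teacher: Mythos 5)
Your proposal is sound in outline, but note that the paper itself does not prove this proposition: it is imported wholesale from \cite{Wu_DGAA19}, so the honest comparison is between your argument and the machinery the paper builds around that citation. Your first stage --- rewriting $V^\pm(\mu_0)$ as the upper/lower values of a pre-revelation game with terminal cost $e^{-\lambda\mathcal T}V\big(Z_{\mathcal T}\big)$ --- is exactly Proposition \ref{prps_AF} (and Corollary \ref{cor_AF}), which the paper proves for arbitrary $\mu_0\in\mathcal P(\mathbb R^{n+1})$ by the same grafting construction you sketch: a finite cover of the reachable set $Z_{\mu_0,\mathcal T}$ by balls on which a single $\varepsilon$-optimal continuation strategy stays near-optimal (via the H\"older continuity of $V$ in Lemma \ref{lem_comp_info}), a Borel partition, and a switch at grid times $(m+1)\tau$; observe also that this stage needs \eqref{eq_IC1}, which does follow from your hypothesis \eqref{eq_IC2} by taking $\mu=\delta_z$. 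Where you genuinely diverge is the second stage: the paper never discretizes --- it invokes the finite-support existence result of \cite{Wu_DGAA19} and uses Proposition \ref{prps_AF} only to obtain the $W_2$-continuity of $V^\pm$ (Proposition \ref{prps_regV1}) and then passes to general $\mu_0$ by approximation (Theorem \ref{thm_main1}) --- whereas you supply an actual proof of the finite-support case via time discretization of the reduced game. That plan is workable, but it is thinnest exactly where you flag it: a single global time grid with staggered, control-dependent exit times $\mathcal T(z_0^i,u,v)$ forces you to track, inside each block, which components have frozen, and the aggregate Hamiltonian changes accordingly. A cleaner organization is an induction on the set $S$ of not-yet-revealed indices: on the region where the components of $S$ are active, the reduced game is a classical exit-time game on the product space $\mathbb R^{(n+1)I}$ whose exit cost is the value of the game with one fewer active component (continuous by induction, the transversality hypothesis $g>0$ and Lemma 1.2 guaranteeing continuity of $\mathcal T$), and the Isaacs condition needed at each stage --- equality of $\inf\sup$ and $\sup\inf$ for $\sum_{i\in S}q_i\big[F(z_i,u,v)\cdot p_i+\ell(z_i,u,v)\big]$ plus frozen constants --- follows from \eqref{eq_IC2}/\eqref{eq_IC4} after a positive rescaling of the weights, which leaves the minimax equality intact. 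With that restructuring your argument is complete and self-contained, which is something the paper's citation-based treatment of this proposition does not itself provide.
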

\begin{mrmk}
	While the definition of signal-dependent NAD strategies slightly varies between this paper and \cite{Wu_DGAA19}, it follows from Proposition \ref{prps_AF} in the next subsection and the corresponding Lemma 3.2 from \cite{Wu_DGAA19} that the value functions in both papers are the same.
\end{mrmk}
\begin{mthm}\label{thm_main1}
	Under Isaacs' condition \eqref{eq_IC2}, for all $\mu_0\in\mathcal{P}({\R}^{n+1})$, game $\mathcal{G}(\mu_0)$ has a value $V(\mu_0)=V^\pm(\mu_0)$
\end{mthm}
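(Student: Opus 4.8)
The plan is to reduce the general case to the finite-support case of Proposition \ref{prps_mainfin} by a density-plus-stability argument in the Wasserstein space $\pr{\mathcal P(\mathbb R^{n+1}),W_2}$. The two ingredients are: the continuity of the maps $\mu\mapsto V^\pm(\mu)$ with respect to $W_2$ (on measures supported in a fixed compact set), and the density of finitely supported measures. Granting the continuity, the argument is short. Given $\mu_0$ with compact support, choose finitely supported $\mu_n=\sum_i q_i^n\delta_{z_i^n}$ with $W_2(\mu_n,\mu_0)\to 0$ — partition the support of $\mu_0$ into cells of vanishing diameter and place an atom of the appropriate mass in each cell, so that $W_2(\mu_n,\mu_0)$ is bounded by the maximal cell diameter. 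By Proposition \ref{prps_mainfin}, $V^+(\mu_n)=V^-(\mu_n)$ for every $n$, and letting $n\to\infty$ and invoking continuity yields
$$V^+(\mu_0)=\lim_n V^+(\mu_n)=\lim_n V^-(\mu_n)=V^-(\mu_0),$$
which is the asserted existence of value.

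The heart of the matter is therefore the continuity, which I would establish through the terminal-cost reformulation announced in the introduction. The difficulty is that SNAD strategies are signal-dependent: after revelation the players react to $Z^{z_0}_{\mathcal T}$, which itself depends on $z_0$, so $J(\mu,A,B)$ is \emph{not} the integral against $\mu$ of a fixed, control-independent function, and one cannot transport a strategy from the $\mu$-game to the $\nu$-game by a naive coupling. To circumvent this, I would show (this is the content of the reformulation proposition of the next subsection) that $V^\pm(\mu)$ coincides with the upper/lower value $\hat V^\pm(\mu)$ of the symmetric-information game without in-play signal, stopped at $\mathcal T$, whose payoff is
\begin{equation*}
	\hat J(z,u,v)=\int_0^{\mathcal T(z,u,v)} e^{-\lambda t}\ell\pr{Z^{z,u,v}_t,u(t),v(t)}\,dt+e^{-\lambda \mathcal T(z,u,v)}\,V\pr{Z^{z,u,v}_{\mathcal T(z,u,v)}},
\end{equation*}
where $V=V^\pm$ is the complete-information value of Lemma \ref{lem_comp_info}, well defined since \eqref{eq_IC2} implies the pointwise Isaacs condition \eqref{eq_IC1}. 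Here the post-revelation optimization (identified with the complete-information game via Lemma \ref{lem_value1}) has been folded into the terminal cost, so the pre-revelation strategies are again $z$-independent NAD strategies $\alpha,\beta$, and $\hat J(\mu,\alpha,\beta)=\int \hat J(z,u_{\alpha\beta},v_{\alpha\beta})\,d\mu(z)$ is genuinely an integral against $\mu$ of a function of $z$ alone.

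With this reformulation in hand, continuity in $W_2$ reduces to a uniform regularity estimate: I would show that $z\mapsto \hat J(z,u,v)$ admits a modulus of continuity $\omega$ independent of $(u,v)$. For the integral term this follows from the boundedness and Lipschitz continuity of $\ell$ together with the trajectory estimates \eqref{eq_assum_reg1}--\eqref{eq_assum_reg2} and the locally uniform Lipschitz continuity of the hitting time recalled in Section \ref{sect_PR}. For the terminal term one combines the H\"older continuity of $V$ (Lemma \ref{lem_comp_info}) with the fact that $z\mapsto Z^{z,u,v}_{\mathcal T(z,u,v)}$ is locally Lipschitz uniformly in $(u,v)$, via $\|Z^{z}_{\mathcal T(z)}-Z^{z'}_{\mathcal T(z')}\|\le \|F\|_\infty\abs{\mathcal T(z)-\mathcal T(z')}+e^{L_F T}\|z-z'\|$, and with the local Lipschitz control of $e^{-\lambda\mathcal T}$. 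A common modulus then gives $\abs{\hat J(\mu,\alpha,\beta)-\hat J(\nu,\alpha,\beta)}\le\omega(W_2(\mu,\nu))$ uniformly in $(\alpha,\beta)$, and taking $\inf\sup$ and $\sup\inf$ propagates this to $\abs{V^\pm(\mu)-V^\pm(\nu)}\le\omega(W_2(\mu,\nu))$.

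I expect the main obstacle to be the reformulation step itself, i.e. proving rigorously that the signal-revealing value $V^\pm(\mu)$ equals the terminal-cost value $\hat V^\pm(\mu)$; this is where the entire signal structure and the SNAD machinery of Section \ref{sect_SV} (Lemmas \ref{SNAD_lem1B}--\ref{SNAD_lem2} and the decomposition of play at the revelation time) must be put to work. A secondary technical point, needed for $\hat J$ to be well defined and the estimates uniform, is to ensure $\mathcal T(z,u,v)$ is finite and bounded on the compact support: since $g>0$ on $(-\infty,M_0]$ and $g$ is continuous, it is bounded below by some $\delta>0$ on the relevant compact range of $y$, so $Y^{y_0,u,v}_\cdot$ increases at rate at least $\delta$ and reaches $M_0$ in uniformly bounded time.
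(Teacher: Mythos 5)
Your proposal follows essentially the same route as the paper: approximate $\mu_0$ in $W_2$ by finitely supported measures, invoke Proposition \ref{prps_mainfin}, and pass to the limit using uniform continuity of $V^\pm$, which the paper establishes exactly as you sketch --- via the terminal-cost reformulation (your $\hat J$ is the paper's $\bar J$ in Proposition \ref{prps_AF}), the uniform modulus for $z\mapsto \bar J(z,\alpha,\beta)$ (Lemma \ref{lem_reg1}), and the $\inf\sup$/$\sup\inf$ stability lemma (Lemma \ref{lem_regtech}, yielding Proposition \ref{prps_regV1}). You also correctly identify the reformulation as the technical heart, where the paper deploys the SNAD machinery of Section \ref{sect_SV}, and correctly note that \eqref{eq_IC2} implies \eqref{eq_IC1} so that the terminal value $V$ is well defined.
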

\begin{proof}
	It is well-known that under Wasserstein distance $W_2$, any probability measure $\mu_0\in\mathcal P(\mathbb R^{n+1})$ can be approximated by a sequence of probability measures of finite support. Hence, the existence of value for game $\mathcal G(\mu_0)$ with $\mu_0\in\mathcal P(\mathbb R^{n+1})$ follows from Proposition \ref{prps_mainfin} and Proposition \ref{prps_regV1}. The proof is complete.
\end{proof}

\subsection{Alternative forms of $V^\pm$}
	To prove Theorem \ref{thm_main1}, we will need to obtain the continuity of $V^\pm$ with respect to $\mu_0\in\mathcal P(\mathbb R^{n+1})$. However, given SNAD strategies $(A,B)\in\mathcal A_s\times\mathcal B_s$, the map $z\mapsto (u_z,v_z)$ defined in Lemma \ref{SNAD_lem2} is not necessarily continuous. Therefore we can not deduce the regularity of $V^\pm$ directly from the uniform continuity of the cost.\par
	In order to overcome this obstacle, we write the value functions of game $\mathcal G(\mu_0)$ in alternative forms (cf. Proposition \ref{prps_AF}) which are in turn value functions of another differential game of symmetric incomplete information with both a running cost and a terminal cost at the controlled stopping time $\mathcal T(z_0,\cdot)$.
\begin{mprps}\label{prps_AF} Under Isaacs' condition \eqref{eq_IC1}, one has $\forall \mu_0\in\mathcal P(\mathbb R^{n+1})$,
	\begin{align*}
		V^+(\mu_0)&=\inf_{\alpha\in\mathcal A_d}\sup_{\beta\in\mathcal B_d}\int_{{\R}^{n+1}}\bar J(z,\alpha,\beta)d\mu_0(z),\\
		V^-(\mu_0)&=\sup_{\beta\in\mathcal B_d}\inf_{\alpha\in\mathcal A_d}\int_{{\R}^{n+1}}\bar J(z,\alpha,\beta)d\mu_0(z),
	\end{align*}
where $\bar J(z,\alpha,\beta)=\int^{\mathcal T(z,\alpha,\beta)}_0e^{-\lambda t} \ell(Z^{z,\alpha,\beta}_t,\alpha,\beta)dt
+e^{-\lambda {\mathcal T(z,\alpha,\beta)}}V^+(Z^{z,\alpha,\beta}_{\mathcal T(z,\alpha,\beta)}).$
\end{mprps}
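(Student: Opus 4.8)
The plan is to exploit the decomposition of an SNAD strategy recorded in the remark following Lemma \ref{SNAD_lem1}: each $A\in\mathcal A_s$ splits into a pre-revelation NAD strategy $\alpha_A\in\mathcal A_d$ together with a collection $(\alpha_{T,z})$ of post-revelation NAD strategies, and likewise for $B\in\mathcal B_s$. The guiding idea is that, once the current state is publicly revealed, the residual game is a complete-information differential game whose value is governed by Lemma \ref{lem_comp_info}. Concretely, for a fixed pair $(A,B)$ with associated controls $(u_z,v_z)$ from Lemma \ref{SNAD_lem2}, I would first split the discounted payoff at the hitting time,
\begin{equation*}
  J(z,u_z,v_z)=\int_0^{\mathcal T(z,\alpha_A,\beta_B)}e^{-\lambda t}\ell\pr{Z^{z,\alpha_A,\beta_B}_t,u_z,v_z}dt+e^{-\lambda\mathcal T(z,\alpha_A,\beta_B)}J\pr{Z^{z,\alpha_A,\beta_B}_{\mathcal T(z,\alpha_A,\beta_B)},\tilde u,\tilde v},
\end{equation*}
where the hitting time and the revealed state $\zeta:=Z^{z,\alpha_A,\beta_B}_{\mathcal T(z,\alpha_A,\beta_B)}$ depend only on $(\alpha_A,\beta_B)$, while the time-shifted tail controls $(\tilde u,\tilde v)=(u_z,v_z)(\cdot+\mathcal T)$ are precisely those produced by the post-revelation pair $(\alpha_{\mathcal T,\zeta},\beta_{\mathcal T,\zeta})$ at the realized $(\mathcal T,\zeta)$ (this is what the construction in the proof of Lemma \ref{SNAD_lem2} yields). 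By Lemma \ref{lem_comp_info}, which requires Isaacs' condition \eqref{eq_IC1}, the residual game from $\zeta$ has value $V^+(\zeta)=V^-(\zeta)$, and Lemma \ref{lem_value1} expresses it in exactly the NAD form the tail takes.

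For the bound $V^+(\mu_0)\le \inf_{\alpha}\sup_{\beta}\int\bar J\,d\mu_0$, I would fix any $\alpha\in\mathcal A_d$ and, for each attainable revealed state $\zeta$, an $\varepsilon$-optimal NAD strategy $\alpha_\zeta$ for $V^+(\zeta)$, and assemble an SNAD strategy $A$ with pre-revelation part $\alpha$ and post-revelation collection $(\alpha_\zeta)$. For any opponent $B\in\mathcal B_s$, the tail payoff at the realized $\zeta(z)$ is at most $V^+(\zeta(z))+\varepsilon$ regardless of $B$'s post-revelation choices, since $\sup_{v}J(\zeta,\alpha_\zeta(v),v)\le V^+(\zeta)+\varepsilon$ by Lemma \ref{lem_value1}, whereas the head and the pair $(\mathcal T,\zeta)$ are controlled through $\beta_B\in\mathcal B_d$ alone. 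Hence $\sup_B J(\mu_0,A,B)\le \sup_{\beta}\int\bar J(z,\alpha,\beta)\,d\mu_0+O(\varepsilon)$; taking $\inf$ over $\alpha$ and then $\varepsilon\to0$ gives the inequality.

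For the reverse inequality I would start from an $\varepsilon$-optimal $\bar A\in\mathcal A_s$ for $V^+(\mu_0)$ with pre-revelation part $\alpha_{\bar A}$, fix an arbitrary $\beta\in\mathcal B_d$, and against each revealed $\zeta$ insert an $\varepsilon$-optimal maximizer's NAD strategy $\beta_\zeta$ for $V^-(\zeta)$, assembling $B\in\mathcal B_s$. By the symmetric form of Lemma \ref{lem_value1}, the tail payoff is then at least $V^-(\zeta)-\varepsilon=V^+(\zeta)-\varepsilon$ irrespective of $\bar A$'s post-revelation collection, so that $\int\bar J(z,\alpha_{\bar A},\beta)\,d\mu_0\le V^+(\mu_0)+O(\varepsilon)$ for every $\beta$. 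Taking $\sup_\beta$, then passing to $\inf$ over the pre-revelation strategy and $\varepsilon\to0$, closes the argument, and combining the two inequalities proves the formula for $V^+$. The representation of $V^-$ follows by the entirely symmetric construction.

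The main obstacle I expect lies in the two strategy-assembly steps. First, the selections $\zeta\mapsto\alpha_\zeta$ and $\zeta\mapsto\beta_\zeta$ must be Borel measurable so that the assembled maps are genuine SNAD strategies in the sense of Definition \ref{Def_SNAD}; here I would use the local $2\varepsilon$-optimality of Remark \ref{rmk_comp_info} together with compactness of $\supp{\mu_0}$ (and hence of the set of attainable revealed states, the hitting times being uniformly bounded since $g$ is bounded below on $(-\infty,M_0]$) to choose $\alpha_\zeta$ constant on each cell of a finite Borel partition. Second, one must verify carefully that the controls generated via Lemma \ref{SNAD_lem2} by the assembled strategies decompose as head-plus-shifted-tail with the tail driven exactly by the inserted post-revelation NAD pair; this is where the delay bookkeeping of Definition \ref{Def_SNAD} and the fixed-point uniqueness of Lemma \ref{SNAD_lem1B} (applied to $(A(\mathcal T,\zeta,\cdot),B(\mathcal T,\zeta,\cdot))\in\mathcal A_d\times\mathcal B_d$) must be invoked.
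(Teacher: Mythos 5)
Your proposal is correct and follows essentially the same route as the paper's proof: both directions split the payoff at the hitting time, insert locally constant $\varepsilon$-optimal post-revelation NAD strategies (for $V^+$ in one direction, and for $V^-=V^+$ via Isaacs' condition \eqref{eq_IC1} in the other) chosen on a finite Borel partition of the compact set of attainable revealed states via Remark \ref{rmk_comp_info}, and identify the head-plus-shifted-tail decomposition of the controls through the uniqueness in Lemma \ref{SNAD_lem1B}. The paper implements your ``delay bookkeeping'' by additionally discretizing the revelation time into a $\tau$-grid of intervals $[(m-1)\tau,m\tau)$ and estimating the resulting $O(\tau)$ window (cost, state drift, discount shift, and H\"older continuity of the complete-information value), which is a technical realization of exactly the obstacles you flag.
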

\begin{proof}
	We only prove the first eqaution, since the second can be established symmetrically. Let us denote by $W^+(\mu_0):=\inf_{\alpha\in\mathcal A_d}\sup_{\beta\in\mathcal B_d}\int_{{\R}^{n+1}}\bar J(z,\alpha,\beta)d\mu_0(z)$ the right-hand side of the first equation.
	\paragraph*{Step 1: $V^+\le W^+$.}
	Let $\alpha_0$ be an $\varepsilon$-optimal strategy for $W^+(\mu_0)$, namely,
	\begin{align*}
		W^+(\mu)+\varepsilon\\
		\ge\sup_{\beta\in\mathcal B_d}\int_{{\R}^{n+1}}\Big[&\int^{\mathcal T(z,\alpha_0,\beta)}_0e^{-\lambda t} \ell(Z^{z,\alpha_0,\beta}_t,\alpha_0,\beta)dt
		+e^{-\lambda {\mathcal T(z,\alpha_0,\beta)}}V^+(Z^{z,\alpha_0,\beta}_{\mathcal T(z,\alpha_0,\beta)})\Big]d\mu_0(z).
	\end{align*}
	Since $\supp{\mu_0}$ is compact, it follows from \eqref{lip_hittime} and Assumptions \ref{Assum1} that there exists $\bar{\mathcal T}>0$ such that
	\begin{equation}
		0\le\mathcal T(z,u,v)\le \bar{\mathcal T},\ \forall z=(x,y)\in\supp{\mu_0}\txt{ and }(u,v)\in\mathcal U\times\mathcal V.
	\end{equation}
	Hence, for all $z\in\supp{\mu_0}$ and $(u,v)\in\mathcal U\times\mathcal V$, $\big|Z^{z,u,v}_{\mathcal T(z,u,v)}-z\big|\le \bar{\mathcal T}\|F\|_{\infty}$, and the set
	$Z_{\mu_0,\mathcal T}:=\big\{Z^{z,u,v}_{\mathcal T(z,u,v)}\ |\ z\in\supp{\mu_0},\ (u,v)\in\mathcal U\times\mathcal V\big\}$
	 is bounded. Let $M>0$ be sufficiently large so that $Z_{\mu,\mathcal T}\subset \bar B(\textbf{0};M)$. Let us choose for all $z\in\mathbb R^{n+1}$, $\alpha_z\in\mathcal A_d$ an $\varepsilon/2$-optimal strategy for $V^+(z)$, namely,
	 $$V^+(z)\ge \sup_{\beta\in\mathcal B_d}\int^\infty_0 e^{-\lambda t}\ell\big(Z^{z,\alpha_z,\beta}_t,\alpha_z,\beta\big)dt-\frac{\varepsilon}{2}.$$
	 In view of Remark \ref{rmk_comp_info}, there exists $\delta_z>0$ such that for all $z'\in B(z;\delta_z)$, $\alpha_z$ remains an $\varepsilon$-optimal strategy for $V^+(z')$. The family $\{B(z;\frac{\delta_z}{2})\}_{z\in \bar B(\textbf{0};M)}$ forms an open cover of $\bar B(\textbf{0};M)$, and thus there exists a finite cover $\bar B(\textbf{0};M)\subset \cup_{k=1}^N B\Big(z_k;\frac{\delta_{z_k}}{2}\Big)$.
	 Furthermore, from $\Big\{B\Big(z_k;\frac{\delta_{z_k}}{2}\Big)\Big\}_{1\le k\le N}$ we can construct a Borel partition of $\mathbb{R}^{n+1}$ by setting
	$$E_0=\emptyset,\ E_k=B\Big(z_k;\frac{\delta_{z_k}}{2}\Big)\backslash(\cup_{i=0}^{k-1}E_i), \forall 1\le k\le N\txt{ and }E_{N+1}=\mathbb{R}^{n+1}\backslash(\cup_{i=0}^{N}E_i).$$
	 Let $\delta:=\min_{1\le k\le N}\delta_{z_k}/2$. We denote by $\alpha_k$ the strategy $\alpha_{z_k}$ which is an $\varepsilon$-optimal strategy for $V^+(z)$ for all $z\in B(z_k;\delta_{z_k})\supset B\Big(z_k;\frac{\delta_{z_k}}{2}\Big)\supset E_k$. Let $\tau>0$ be a sufficiently small common delay of $\alpha_0,\alpha_1,...,\alpha_N$ such that $\tau\le \tau(\varepsilon,\delta)$ with
	$$\tau(\varepsilon,\delta)=\min\Big(\frac{\delta}{2(1+\|F\|_\infty)},\frac{\varepsilon}{2\|\ell\|_{\infty}},\frac{1}{2\lambda}\ln\Big(1-\frac{\varepsilon\lambda}{\|\ell\|_\infty+1}\Big),\frac{\varepsilon^{\frac{1}{\gamma}}}{2(\|F\|_{\infty}+1)C^{\frac{1}{\gamma}}}\Big).$$ 
	Here $\gamma>0$ and $C>0$ are defined as in Lemma \ref{lem_comp_info} such that $|V(z)-V(z)'|\le C\|z-z'\|^\gamma$ for all $z,z'\in\mathbb R^{n+1}$.	 
	 We construct a family of NAD strategies $\alpha_{m,k}\in\mathcal A_d$, for all $1\le k\le N$ and $1\le m\le \lceil \bar{\mathcal T}/\tau\rceil:=\inf\{m\in\mathbb{N}\ |\ \mathcal{T}/\tau\in (m-1,m]\}$ as follows. 
	\begin{equation*}
		\alpha_{m,k}(v)(t)=\left\{\begin{aligned}
			&\alpha_0(v)(t),&t\in[0,(m+1)\tau),\\
			&\alpha_k\big(v(\cdot+(m+1)\tau)\big)(t-(m+1)\tau),&t\ge (m+1)\tau.
		\end{aligned}\right.
	\end{equation*}
	One can check that $\alpha_{m,k}:\mathcal V\to\mathcal U$ is indeed an NAD strategy with delay $\tau>0$. We construct $\bar{A}\in\mathcal A_s$ a SNAD strategy for player 1 as follows: for all $(T,z,v)\in\R_+\times{\R}^{n+1}\times\mathcal{V}$,
	\begin{equation}
		\bar{A}(T,z,v)=\left\{\begin{aligned}
			&\alpha_{m,k}(v),&\text{if }z\in E_k\text{ and }T\in[(m-1)\tau,m\tau),\\
			&\alpha_0(v),&\text{else.}
		\end{aligned}\right.
	\end{equation}
	Since, for all $1\le k\le N$ and $1\le m\le \lceil \bar{\mathcal T}/\tau\rceil$, the maps $v\mapsto \alpha_{m,k}(v)$ and $v\mapsto \alpha_0(v)$ are Borel measurable, one deduce that $\bar{A}:\mathbb R_+\times\mathbb{R}^{n+1}\times\mathcal{V}\to\mathcal{U}$ is Borel measurable. In addition, the NAD property of $\bar{A}$ follows from the NAD property of $\alpha_0$ and $\{\alpha_{m,k}\}_{1\le k\le N,1\le m\le \lceil \bar{\mathcal T}/\tau\rceil}$, and we deduce that $\bar{A}\in\mathcal{A}_s$. 
	
	Fix arbitrary $z_0\in\supp{\mu_0}$ and $B\in\mathcal{B}_s$, and let $(u_{z_0},v_{z_0})\in\mathcal{U}\times\mathcal{V}$ denote the unique pair of controls associated to $(\bar{A},B)$ as defined in Lemma \ref{SNAD_lem2}. By the definition of $J(z_0,A,B)$,
	\begin{equation}\label{eq_prps_TSP1}
		\begin{aligned}J(z_0,A,B)=&\int^{\mathcal T(z_0,u_{z_0},v_{z_0})}_0e^{-\lambda t} \ell(Z^{z_0,u_{z_0},v_{z_0}}_t,u_{z_0},v_{z_0})dt\\
		&+\int_{\mathcal T(z_0,u_{z_0},v_{z_0})}^{+\infty}e^{-\lambda t} 	\ell(Z^{z_0,u_{z_0},v_{z_0}}_t,u_{z_0},v_{z_0})dt.\end{aligned}
	\end{equation}
	Let $\beta_B\in\mathcal B_d$ be the NAD strategy associated to $B$ defined as in Lemma \ref{SNAD_lem1}. Then by the above construction of $\bar{A}\in\mathcal{A}_s$, for almost every $t\in [0,\mathcal T(z_0,u_{z_0},v_{z_0})+\tau]$,
	\begin{align*}
		u_{z_0}(t)&=\bar{A}\big(\mathcal T(z_0,u_{z_0},v_{z_0}),Z^{z_0,u_{z_0},v_{z_0}}_{\mathcal T(z_0,u_{z_0},v_{z_0})},v_{z_0}\big)(t)=\alpha_0(v_{z_0})(t);\\
		v_{z_0}(t)&=B\big(\mathcal T(z_0,u_{z_0},v_{z_0}),Z^{z_0,u_{z_0},v_{z_0}}_{\mathcal T(z_0,u_{z_0},v_{z_0})},u_{z_0}\big)(t)=\beta_B(u_{z_0})(t).
	\end{align*}	
	Consequently, $\mathcal T(z_0,u_{z_0},v_{z_0})=\mathcal T(z_0,\alpha_0,\beta_B)$ by Lemma \ref{SNAD_lem1B}, and
	\begin{equation}\label{eq_prps_TSP2}
		\int^{\mathcal T(z_0,u_{z_0},v_{z_0})}_0e^{-\lambda t} \ell(Z^{z_0,u_{z_0},v_{z_0}}_t,u_{z_0},v_{z_0})dt=\int^{\mathcal T(z_0,\alpha_0,\beta_B)}_0e^{-\lambda t} \ell(Z^{z_0,\alpha_0,\beta_B}_t,\alpha_0,\beta_B)dt.
	\end{equation}
	Let us denote by $m_{z_0}$ the integer $m$ verifying $\mathcal T(z_0,u_{z_0},v_{z_0})\in [(m-1)\tau,m\tau)$, and we write
	\begin{equation}\label{eq_prps_TSP3}
		\begin{aligned}&\int_{\mathcal T(z_0,u_{z_0},v_{z_0})}^{+\infty}e^{-\lambda t} 	\ell(Z^{z_0,u_{z_0},v_{z_0}}_t,u_{z_0},v_{z_0})dt\\
		=&\int_{\mathcal T(z_0,u_{z_0},v_{z_0})}^{\tau (m_{z_0}+1)}e^{-\lambda t} 	\ell(Z^{z_0,u_{z_0},v_{z_0}}_t,u_{z_0},v_{z_0})dt+\int_{\tau (m_{z_0}+1)}^{+\infty}e^{-\lambda t} 	\ell(Z^{z_0,u_{z_0},v_{z_0}}_t,u_{z_0},v_{z_0})dt.\end{aligned}
	\end{equation}
	We have the following estimation for the first term on the right-hand side of \eqref{eq_prps_TSP3},
	\begin{equation}\label{eq_prps_TSP4}
		\Big|\int_{\mathcal T(z_0,u_{z_0},v_{z_0})}^{\tau (m_{z_0}+1)}e^{-\lambda t} 	\ell(Z^{z_0,u_{z_0},v_{z_0}}_t,u_{z_0},v_{z_0})dt\Big|\le \|\ell\|_\infty \big|\tau (m_{z_0}+1)-\mathcal T(z_0,u_{z_0},v_{z_0})\big|\le \varepsilon.
	\end{equation}
	It remains to estimate the other term. Assume that 
	$$Z^{z_0,\alpha_0,\beta_B}_{\mathcal T(z_0,\alpha_0,\beta_B)}\in E_k\txt{ for some }1\le k\le N.$$
	By the construction of the strategy $\bar{A}$, $u_{z_0}=a_{m_{z_0},k}(v_{z_0})$. Let us truncate the control $v_{z_0}$ by defining for all $m\ge 1$, $v^{m}_{z_0}=v_{z_0}(\cdot+m\tau)$. Hence
	$$u_{z_0}\big(\cdot+\tau (m_{z_0}+1)\big)=\alpha_{m_{z_0},k}(v_0)\big(\cdot+\tau (m_{z_0}+1)\big)=\alpha_k\Big(v^{m_{z_0}+1}_{z_0}\Big)\txt{ a.e. on }\mathbb{R}_+.$$
	Let us denote $Z^{z_0}_{B,\tau}:=Z^{z_0,u_{z_0},v_{z_0}}_{\tau(m_{z_0}+1)}$ for simplicity. It follows that, with $Z^{z_0,\alpha_0,\beta_B}_{\mathcal T(z_0,\alpha_0,\beta_B)}\in E_k$ and $\mathcal T(z_0,\alpha_0,\beta_B)\in [\tau(m_{z_0}-1),\tau m_{z_0})$,
	\begin{equation}\label{eq_prps_TSP5}
		\begin{aligned}
			&\int_{\tau (m_{z_0}+1)}^{+\infty}e^{-\lambda t} 	\ell(Z^{z_0,u_{z_0},v_{z_0}}_t,u_{z_0},v_{z_0})dt\\
			=&e^{-\lambda \tau (m_{z_0}+1)}\int_{0}^{+\infty}e^{-\lambda t} 	\ell(Z^{Z^{z_0}_{B,\tau},\alpha_k\Big(v^{m_{z_0}+1}_{z_0}\Big),v^{m_{z_0}+1}_{z_0}}_t,\alpha_k\Big(v^{m_{z_0}+1}_{z_0}\Big),v^{m_{z_0}+1}_{z_0})dt\\
			\le& e^{-\lambda \tau (m_{z_0}+1)}\sup_{v\in\mathcal V}\int_{0}^{+\infty}e^{-\lambda t} 	\ell(Z^{Z^{z_0}_{B,\tau},\alpha_k(v),v}_t,\alpha_k(v),v)dt.
		\end{aligned}
	\end{equation}		
	Recall that $\alpha_k$ is $\varepsilon$-optimal for $V^+(z)$, $\forall z\in B(z_k;\delta_{z_k})$. Since $Z^{z_0,\alpha_0,\beta_B}_{\mathcal T(z_0,\alpha_0,\beta_B)}\in E_k\subset B\big(z_k;\frac{\delta_{z_k}}{2}\big)$ and 
	$\|Z^{z_0}_{B,\tau}-Z^{z_0,\alpha_0,\beta_B}_{\mathcal T(z_0,\alpha_0,\beta_B)}\|\le 2\tau \|F\|_\infty\|\le \delta\le \frac{\delta_{z_k}}{2}$, the current state $Z^{z_0}_{B,\tau}$ at $t=\tau (m_{z_0}+1)$ stays in $B(z_k;\delta_{z_k})$ and consequently $\alpha_k$ remains $\varepsilon$-optimal for $V^+\big(Z^{z_0}_{B,\tau}\big)$. 	
	It follows from the above analysis and \eqref{eq_prps_TSP5} that one has
	\begin{equation}\label{eq_prps_TSP6}
		\begin{aligned}
			&\int_{\tau (m_{z_0}+1)}^{+\infty}e^{-\lambda t} 	\ell(Z^{z_0,u_{z_0},v_{z_0}}_t,u_{z_0},v_{z_0})dt
			\le e^{-\lambda \tau (m_{z_0}+1)} V^+(Z^{z_0}_{B,\tau})+\varepsilon\\
			\le& e^{-\lambda \mathcal T(z_0,\alpha_0,\beta_B)} V^+(Z^{z_0,\alpha_0,\beta_B}_{\mathcal T(z_0,\alpha_0,\beta_B)})+e^{-\lambda \mathcal T(z_0,\alpha_0,\beta_B)}|V^+(Z^{z_0,\alpha_0,\beta_B}_{\mathcal T(z_0,\alpha_0,\beta_B)})-V^+(Z^{z_0}_{B,\tau})|\\
			&+|V^+(Z^{z_0}_{B,\tau})|\big|e^{-\lambda \mathcal T(z_0,\alpha_0,\beta_B)}-e^{-\lambda \tau (m_{z_0}+1)}\big|+\varepsilon\\
			\le& e^{-\lambda \mathcal T(z_0,\alpha_0,\beta_B)} V^+(Z^{z_0,\alpha_0,\beta_B}_{\mathcal T(z_0,\alpha_0,\beta_B)})+C\big\|Z^{z_0,\alpha_0,\beta_B}_{\mathcal T(z_0,\alpha_0,\beta_B)}-Z^{z_0}_{B,\tau}\big\|^\gamma+\frac{\|\ell\|_{\infty}}{\lambda}|1-e^{-2\lambda \tau}|+\varepsilon\\
			\le& e^{-\lambda \mathcal T(z_0,\alpha_0,\beta_B)} V^+(Z^{z_0,\alpha_0,\beta_B}_{\mathcal T(z_0,\alpha_0,\beta_B)})+C(2\tau)^\gamma \|F\|_{\infty}^\gamma+\frac{\|\ell\|_{\infty}}{\lambda}|1-e^{-2\lambda \tau}|+\varepsilon\\
			\le& e^{-\lambda \mathcal T(z_0,\alpha_0,\beta_B)} V^+(Z^{z_0,\alpha_0,\beta_B}_{\mathcal T(y_0,\alpha_0,\beta_B)})+3\varepsilon.
		\end{aligned}
	\end{equation}
	In combining \eqref{eq_prps_TSP1}-\eqref{eq_prps_TSP6}, we have, for any $B\in\mathcal{B}_S$, $\mu_0$-almost surely
	\begin{equation}\label{eq_prps_TSP7}
		\begin{aligned}		
			&J(z_0,\bar{A},B)-4\varepsilon\\
			\le&\int^{\mathcal T(z_0,\alpha_0,\beta_B)}_0e^{-\lambda t} \ell(Z^{z_0,\alpha_0,\beta_B}_t,\alpha_0,\beta_B)dt+e^{-\lambda \mathcal T(z_0,\alpha_0,\beta_B)} V^+(Z^{z_0,\alpha_0,\beta_B}_{\mathcal T(z_0,\alpha_0,\beta_B)}).
		\end{aligned}
	\end{equation}
	It follows that for all $B\in\mathcal{B}_s$,
	\begin{equation}
		\begin{aligned}
			\int_{{\R}^{n+1}}J(z,\bar{A},B)d\mu_0(z)-5\varepsilon&\le \int_{{\R}^{n+1}}\bar J(z,\alpha_0,\beta_B)d\mu_0(z)-\varepsilon\\
			&\le \sup_{\beta\in\mathcal B_d}\int_{{\R}^{n+1}}\bar J(z,\alpha_0,\beta)d\mu_0(z)-\varepsilon\le W^+(\mu_0).
		\end{aligned}
	\end{equation}
	Consequently, taking the supremum over $B\in\mathcal{B}_s$ on both sides of the last inequality above, we obtain
	\begin{equation}\label{eq_prps_TSP8}
		V^+(\mu_0)-5\varepsilon\le \sup_{B\in\mathcal B_s}\int_{{\R}^{n+1}}J(z,\bar{A},B)d\mu_0(z)-5\varepsilon\le W^+(\mu_0).
	\end{equation}
	Passing $\varepsilon\to 0+$ on both sides of \eqref{eq_prps_TSP8} leads to the desired inequality $V^+(\mu_0)\le W^+(\mu_0)$.
	\paragraph*{Step 2: $V^+\ge W^+$.} To prove the opposite inequality, let us recall that by Lemma \ref{lem_comp_info}, under Isaacs' condition \eqref{eq_IC1}, $V^+(z)=V^-(z)$ for all $z\in\mathbb R^{n+1}$. Let $A^*\in\mathcal A_s$ be an $\varepsilon$-optimal strategy for $V^+(\mu_0)$, namely,
	\begin{equation}\label{eq_prps_TSP9}
		V^+(\mu_0)\ge \sup_{B\in\mathcal B_s}\int_{{\R}^{n+1}}J(z,A^*,B)d\mu_0(z)-\varepsilon,
	\end{equation}
	and we denote by $\alpha^*\in\mathcal A_d$ the NAD strategy associated to $A^*$ as defined in Lemma \ref{SNAD_lem1}.\par
	As in Step 1, there exists a finite collection of open balls $\big(B\big(z'_k;\frac{\delta'_k}{2}\big)\big)_{1\le k\le N'}$ and a finite family of NAD strategies for player 2: $(\beta_k)_{1\le k\le N'}$ such that:
	\begin{itemize}
		\item
			 $\big(B\big(z'_k;\frac{\delta'_k}{2}\big)\big)_{1\le k\le N'}$ forms a finite open cover of $$\bar B(\textbf{0};M)\supset Z_{\mu_0,\mathcal T}=\big\{Z^{z_0,u,v}_{\mathcal T(z_0,u,v)}\ |\ z_0\in\supp{\mu_0},\ (u,v)\in\mathcal{U}\times\mathcal{V}\big\};$$
		\item
			 for all $z\in B(z'_k;\delta'_k)$, $\beta_k$ is an $\varepsilon$-optimal strategy for $V^-(z)$, i.e.
			 $$V^-(z)-\varepsilon\le \inf_{u\in\mathcal{U}} \int^\infty_0 e^{-\lambda t}\ell\big(Z^{z,u,\beta_k(u)}_t,u,\beta_k(u)\big)dt.$$
	\end{itemize}
	We construct a Borel partition of $\mathbb{R}^{n+1}$ by setting
	$$E'_0=\emptyset,\ E'_k=B\Big(z'_k;\frac{\delta'_{k}}{2}\Big)\backslash(\cup_{i=0}^{k-1}E'_i), \forall 1\le k\le N'\txt{ and }E'_{N'+1}=\mathbb R^{n+1}\backslash(\cup_{i=0}^{N'}E'_i).$$
	and we denote $\delta':=\min_{1\le k\le N'}\delta'_{k}/2$. Similar to the process in Step 1, let $\beta_0\in\mathcal B_d$ be an arbitrary NAD strategy of player 2. We mimic the process in Step 1 to construct a SNAD strategy $\bar{B}\in\mathcal B_s$ from $\beta_0$ and the collection of $\varepsilon$-optimal strategies $\{\beta_k\}_{1\le k\le N'}$.
	
	Let $\tau'>0$ be a common delay of $\{\beta_k\}_{0\le k\le N'}$ such that $\tau'\le \tau(\varepsilon,\delta')$. We define, for all $1\le k\le N'$ and $1\le m\le \lceil \bar{\mathcal T}/\tau'\rceil$,  the NAD strategy $\beta_{m,k}\in\mathcal B_d$ as follows. 
	\begin{equation*}
		\forall u\in\mathcal U,\ \beta_{m,k}(u)(t)=\left\{\begin{aligned}
			&\beta_0(u)(t),&t\in[0,(m+1)\tau'),\\
			&\beta_k\big(u(\cdot+(m+1)\tau')\big)(t-(m+1)\tau'),&t\ge (m+1)\tau'.
		\end{aligned}\right.
	\end{equation*}
	One can verify that $\beta_{m,k}:\mathcal U\to\mathcal V$ is an NAD strategy with delay $\tau'>0$. We proceed to construct $\bar{B}\in\mathcal{B}_s$ follows: for all $(T,z,u)\in\R_+\times{\R}^{n+1}\times\mathcal{U}$,
	\begin{equation}
		\bar{B}(T,z,u)=\left\{\begin{aligned}
			&\beta_{m,k}(u),&\text{if }z\in E'_k\text{ and }T\in[(m-1)\tau',m\tau'),\\
			&\beta_0(u),&\text{else.}
		\end{aligned}\right.
	\end{equation}
	Similar to the strategy $\bar{A}\in\mathcal A_s$ constructed in Step 1, one can check that $\bar{B}\in\mathcal B_s$.
	
	Fix $z_0\in\supp{\mu_0}$. Let $(u'_{z_0},v'_{z_0})$ denote the unique pair of admissible controls associated to $(A^*,\bar{B})$ defined as in Lemma \ref{SNAD_lem2}. By Lemma \ref{SNAD_lem1} and the above construction of $\bar{B}$, for almost every $t\in [0,\mathcal T(z_0,u'_{z_0},v'_{z_0})+\tau']$,
	\begin{align*}
		u'_{z_0}(t)&=A^*\big(\mathcal T(z_0,u'_{z_0},v'_{z_0}),Z^{z_0,u'_{z_0},v'_{z_0}}_{\mathcal T(z_0,u'_{z_0},v'_{z_0})},v'_{z_0}\big)(t)=\alpha^*(v'_{z_0})(t);\\
		v'_{z_0}(t)&=\bar{B}\big(\mathcal T(z_0,u'_{z_0},v'_{z_0}),Z^{z_0,u'_{z_0},v'_{z_0}}_{\mathcal T(z_0,u'_{z_0},v'_{z_0})},u'_{z_0}\big)(t)=\beta_0(u'_{z_0})(t).
	\end{align*}	
	Hence $\mathcal T(z_0,u'_{z_0},v'_{z_0})=\mathcal T(z_0,\alpha^*,\beta_0)$ by Lemma \ref{SNAD_lem1B}, and it follows that
	\begin{equation}\label{eq_prps_TSP10}
		\begin{aligned}J(z_0,A^*,\bar{B})=&\int^{\mathcal T(z_0,\alpha^*,\beta_0)}_0e^{-\lambda t} \ell(Z^{z_0,\alpha^*,\beta_0}_t,\alpha^*,\beta_0)dt\\
		&+\int_{\mathcal T(z_0,\alpha^*,\beta_0)}^{+\infty}e^{-\lambda t} 	\ell(Z^{z_0,u'_{z_0},v'_{z_0}}_t,u'_{z_0},v'_{z_0})dt.\end{aligned}
	\end{equation}
	By denoting $m'_{z_0}$ the integer $m$ such that $\mathcal T(z_0,u'_{z_0},v'_{z_0})\in [(m-1)\tau',m\tau')$, the last term on the right-hand side of \eqref{eq_prps_TSP10} can be written as:
	\begin{equation}\label{eq_prps_TSP11}
		\begin{aligned}&\int_{\mathcal T(z_0,\alpha^*,\beta_0)}^{+\infty}e^{-\lambda t} 	\ell(Z^{z_0,u'_{z_0},v'_{z_0}}_t,u'_{z_0},v'_{z_0})dt\\
		=&\int_{\mathcal T(z_0,\alpha^*,\beta_0)}^{\tau' (m'_{z_0}+1)}e^{-\lambda t} 	\ell(Z^{z_0,u'_{z_0},v'_{z_0}}_t,u'_{z_0},v'_{z_0})dt+\int_{\tau' (m'_{z_0}+1)}^{+\infty}e^{-\lambda t} 	\ell(Z^{z_0,u'_{z_0},v'_{z_0}}_t,u'_{z_0},v'_{z_0})dt.\end{aligned}
	\end{equation}
	Similar to \eqref{eq_prps_TSP4}, we have
	\begin{equation}\label{eq_prps_TSP12}
		\Big|\int_{\mathcal T(z_0,\alpha^*,\beta_0)}^{\tau' (m'_{z_0}+1)}e^{-\lambda t} 	\ell(Z^{z_0,u'_{z_0},v'_{z_0}}_t,u'_{z_0},v'_{z_0})dt\Big|\le \|\ell\|_\infty \big|\tau' (m'_{z_0}+1)-\mathcal T(z_0,\alpha^*,\beta_0)\big|\le \varepsilon.
	\end{equation}
	It remains thus to estimate the second term on the right-hand side of \eqref{eq_prps_TSP11}. Assume that $$Z^{z_0,\alpha^*,\beta_0}_{\mathcal T(z_0,\alpha^*,\beta_0)}\in E'_k\txt{ for some }1\le k\le N'.$$
	By the construction of strategy $\bar{B}$, $v'_{z_0}=\beta_{m'_{z_0},k}(u'_{z_0})$. In view of the definition of $\beta_{m'_{z_0},k}$, let us further truncate the control $u'_{z_0}$ by defining for all $m\ge 1$, ${u'}^{m}_{z_0}=u'_{z_0}(\cdot+m\tau')$. Hence
	$$v'_{z_0}\big(\cdot+\tau' (m'_{z_0}+1)\big)=\beta_{m'_{z_0},k}(u'_{z_0})\big(\cdot+\tau' (m'_{z_0}+1)\big)=\beta_k\Big({u'}^{m'_{z_0}+1}_{z_0}\Big)\txt{ a.e. on }\mathbb{R}_+.$$
	Let us write $Z^{z_0}_{\beta_0,\tau'}:=Z^{z_0,u'_{z_0},v'_{z_0}}_{\tau'(m'_{z_0}+1)}$ for simplicity. It follows that, with $Z^{z_0,\alpha_0,\beta_B}_{\mathcal T(z_0,\alpha^*,\beta_0)}\in E'_k$ and $\mathcal T(z_0,\alpha^*,\beta_0)\in [\tau'(m'_{z_0}-1),\tau' m'_{z_0})$,
	\begin{equation}\label{eq_prps_TSP13}
		\begin{aligned}
			&\int_{\tau' (m'_{z_0}+1)}^{+\infty}e^{-\lambda t} 	\ell(Z^{z_0,u'_{z_0},v'_{z_0}}_t,u'_{z_0},v'_{z_0})dt\\
			=&e^{-\lambda \tau' (m'_{z_0}+1)}\int_{0}^{+\infty}e^{-\lambda t} 	\ell(Z^{Z^{z_0}_{\beta_0,\tau'},{u'}^{m'_{z_0}+1}_{z_0},\beta_k\Big({u'}^{m'_{z_0}+1}_{z_0}\Big)}_t,{u'}^{m'_{z_0}+1}_{z_0},\beta_k\Big({u'}^{m'_{z_0}+1}_{z_0}\Big))dt\\
			\ge& e^{-\lambda \tau' (m'_{z_0}+1)}\inf_{u\in\mathcal U}\int_{0}^{+\infty}e^{-\lambda t} 	\ell(Z^{Z^{z_0}_{\beta_0,\tau'},u,\beta_k(u)}_t,u,\beta_k(u))dt.
		\end{aligned}
	\end{equation}		
	But $\beta_k$ is $\varepsilon$-optimal for $V^-(z)$, $\forall z\in B(z'_k;\delta'_{k})$. Since $Z^{z_0,\alpha^*,\beta_0}_{\mathcal T(z_0,\alpha^*,\beta_0)}\in E_k\subset B\big(z'_k;\frac{\delta'_{k}}{2}\big)$ and 
	$$\|Z^{z_0}_{\beta_0,\tau'}-Z^{z_0,\alpha^*,\beta_0}_{\mathcal T(z_0,\alpha^*,\beta_0)}\|\le 2\tau' \|F\|_\infty\le \delta'\le \frac{\delta'_{k}}{2},$$ 
	the state $Z^{z_0}_{\beta_0,\tau'}$ stays in $B(z'_k;\delta'_{k})$ and thus $\beta_k$ is $\varepsilon$-optimal for $V^-\big(Z^{z_0}_{\beta_0,\tau'}\big)$. 	
	Arguing as in \eqref{eq_prps_TSP6}, we obtain the following estimation:
	\begin{equation}\label{eq_prps_TSP14}
		\begin{aligned}
			&\int_{\tau' (m'_{z_0}+1)}^{+\infty}e^{-\lambda t} 	\ell(Z^{z_0,u'_{z_0},v'_{z_0}}_t,u'_{z_0},v'_{z_0})dt
			\ge e^{-\lambda \tau' (m'_{z_0}+1)} V^-(Z^{z_0}_{\beta_0,\tau'})-\varepsilon\\
			\ge &e^{-\lambda \mathcal T(z_0,\alpha^*,\beta_0)} V^-(Z^{z_0,\alpha^*,\beta_0}_{\mathcal T(z_0,\alpha^*,\beta_0)})-3\varepsilon
			=e^{-\lambda \mathcal T(z_0,\alpha^*,\beta_0)} V^+(Z^{z_0,\alpha^*,\beta_0}_{\mathcal T(z_0,\alpha^*,\beta_0)})-3\varepsilon.
		\end{aligned}
	\end{equation}
	Since $z_0\in\supp{\mu_0}$ is arbitrary, in combining \eqref{eq_prps_TSP10}-\eqref{eq_prps_TSP14}, we have $\mu_0$-almost surely
	\begin{equation}\label{eq_prps_TSP15}
		\begin{aligned}		
			&J(z_0,A^*,\bar{B})+4\varepsilon\\
			\ge&\int^{\mathcal T(z_0,\alpha^*,\beta_0)}_0e^{-\lambda t} \ell(Z^{z_0,\alpha^*,\beta_0}_t,\alpha^*,\beta_0)dt+e^{-\lambda \mathcal T(z_0,\alpha^*,\beta_0)} V^+(Z^{z_0,\alpha^*,\beta_0}_{\mathcal T(z_0,\alpha^*,\beta_0)})\\
			=&\bar J(z_0,\alpha^*,\beta_0),
		\end{aligned}
	\end{equation}
	The above inequality further implies
	\begin{equation}\label{eq_prps_TSP16}
		\begin{aligned}
			V^+(\mu_0)\ge\sup_{B\in\mathcal B_s}\int_{{\R}^{n+1}}J(z,A^*,B)d\mu_0(z)-\varepsilon\ge& \int_{{\R}^{n+1}}J(z,A^*,\bar{B})d\mu_0(z)-\varepsilon\\
			\ge& \int_{{\R}^{n+1}}\bar J(z,\alpha^*,\beta_0)d\mu_0(z)-5\varepsilon.
		\end{aligned}
	\end{equation}
	However, \eqref{eq_prps_TSP16} holds for all $\beta_0\in\mathcal B_d$, therefore
	\begin{equation}\label{eq_prps_TSP17}
		V^+(\mu_0)\ge \sup_{\beta\in\mathcal B_d}\int_{{\R}^{n+1}}\bar J(z,\alpha^*,\beta)d\mu_0(z)-5\varepsilon\ge W^+(\mu_0)-5\varepsilon.
	\end{equation}
	The desired inequality follows by passing $\varepsilon\to 0+$ on both sides of the above inequality. The proof is complete.
\end{proof}
Before proceeding to prove the regularity of the value functions, let us state the following corollary which can be proved following similar arguments as in the proof of Lemma \ref{lem_value1}.
\begin{mcor}\label{cor_AF}
	Under Isaacs' condition \eqref{eq_IC1}, one has for all $ \mu_0\in\mathcal P(\mathbb R^{n+1})$,
	\begin{align*}
		V^+(\mu_0)&=\inf_{\alpha\in\mathcal A_d}\sup_{v\in\mathcal V}\int_{{\R}^{n+1}}\bar J(z,\alpha(v),v)d\mu_0(z);\\
		V^-(\mu_0)&=\sup_{\beta\in\mathcal B_d}\inf_{u\in\mathcal U}\int_{{\R}^{n+1}}\bar J(z,u,\beta(u))d\mu_0(z).
	\end{align*}
\end{mcor}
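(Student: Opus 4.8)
The plan is to deduce the corollary from Proposition \ref{prps_AF} by showing that, once one player's NAD strategy is frozen, letting the opponent range over all NAD strategies produces the same extremal value as letting the opponent range over open-loop controls. Since Proposition \ref{prps_AF} already gives $V^+(\mu_0)=\inf_{\alpha\in\mathcal A_d}\sup_{\beta\in\mathcal B_d}\int_{\R^{n+1}}\bar J(z,\alpha,\beta)\,d\mu_0(z)$, it suffices to prove, for each fixed $\alpha\in\mathcal A_d$, the identity
\[
\sup_{\beta\in\mathcal B_d}\int_{\R^{n+1}}\bar J(z,\alpha,\beta)\,d\mu_0(z)=\sup_{v\in\mathcal V}\int_{\R^{n+1}}\bar J(z,\alpha(v),v)\,d\mu_0(z),
\]
and then take the infimum over $\alpha\in\mathcal A_d$; the formula for $V^-$ follows symmetrically.

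Both inequalities rest on the canonical pairing of Lemma \ref{SNAD_lem1B} together with the inclusion $\mathcal V\subset\mathcal B_d$, where an open-loop control $v$ is identified with the constant NAD strategy $\beta_v\equiv v$ (Borel measurable and trivially non-anticipative with any delay). First I would observe that, for each $v\in\mathcal V$, Lemma \ref{SNAD_lem1B} applied to $(\alpha,\beta_v)$ returns the control pair $(u_{\alpha\beta_v},v_{\alpha\beta_v})=(\alpha(v),v)$, so that $\bar J(z,\alpha,\beta_v)=\bar J(z,\alpha(v),v)$ for every $z\in\R^{n+1}$; integrating and taking the supremum over $v$ gives the inequality $\ge$. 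Conversely, for each $\beta\in\mathcal B_d$, Lemma \ref{SNAD_lem1B} furnishes a pair $(u_{\alpha\beta},v_{\alpha\beta})$ with $\alpha(v_{\alpha\beta})=u_{\alpha\beta}$; choosing the open-loop control $v:=v_{\alpha\beta}$ yields $\bar J(z,\alpha(v),v)=\bar J(z,u_{\alpha\beta},v_{\alpha\beta})=\bar J(z,\alpha,\beta)$ for every $z$, which after integration and supremum over $\beta$ gives the reverse inequality. Hence the inner suprema coincide, and applying $\inf_{\alpha\in\mathcal A_d}$ produces the first formula.

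For $V^-$ I would argue symmetrically: freezing $\beta\in\mathcal B_d$ and using $\mathcal U\subset\mathcal A_d$ (a control $u$ viewed as the constant NAD strategy $\alpha_u\equiv u$), the same two applications of Lemma \ref{SNAD_lem1B}---now exploiting $\beta(u_{\alpha\beta})=v_{\alpha\beta}$ and $u_{\alpha_u\beta}=u$---give $\inf_{\alpha\in\mathcal A_d}\int_{\R^{n+1}}\bar J(z,\alpha,\beta)\,d\mu_0(z)=\inf_{u\in\mathcal U}\int_{\R^{n+1}}\bar J(z,u,\beta(u))\,d\mu_0(z)$, and one concludes by taking $\sup_{\beta\in\mathcal B_d}$.

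I expect this to be bookkeeping rather than hard analysis. The only points needing care are that constant maps are admissible (Borel measurable and non-anticipative with delay) NAD strategies, and that $z\mapsto\bar J(z,\alpha,\beta)$ is $\mu_0$-measurable so that every integral above is well defined; both are already available from the framework behind Proposition \ref{prps_AF}, namely the measurability of $z\mapsto Z^{z,u,v}_{\mathcal T(z,u,v)}$ and the continuity of $V^{\pm}$ from Lemma \ref{lem_comp_info}. Unlike the proof of Lemma \ref{lem_value1}, no reconstruction of a post-revelation strategy is required here, because $\bar J$ already encapsulates the complete-information value $V^+$ at the revelation time; consequently the fixed-point interplay between the revealed data and the controls that complicated the SNAD setting does not reappear.
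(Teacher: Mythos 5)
Your proof is correct and matches the paper's intended route: the paper dispatches this corollary with a pointer to ``similar arguments as in the proof of Lemma \ref{lem_value1}'', and your argument uses exactly those ingredients transposed to the NAD level --- the embedding $\mathcal{V}\subset\mathcal{B}_d$ via constant strategies, and the fixed-point pairing of Lemma \ref{SNAD_lem1B}, which (by uniqueness, giving $(u_{\alpha\beta_v},v_{\alpha\beta_v})=(\alpha(v),v)$ and $u_{\alpha\beta}=\alpha(v_{\alpha\beta})$ for general $\beta$) shows that for each fixed $\alpha$ the inner supremum over $\beta\in\mathcal{B}_d$ in Proposition \ref{prps_AF} coincides with the supremum over $v\in\mathcal{V}$. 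You also rightly observe that the strategy-reconstruction step of Lemma \ref{lem_value1} is unnecessary here: since Proposition \ref{prps_AF} already places both sides over $\mathcal{A}_d$ and the realized controls $(u_{\alpha\beta},v_{\alpha\beta})$ do not depend on $z$, the identification of outcomes is exact pointwise in $z$, so no $\varepsilon$-optimal selections are needed.
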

\subsection{Regularity of $V^\pm$}
The main benefit of writing the value functions in their alternative forms from Proposition \ref{prps_AF} is that the cost $z\mapsto\bar J(z,\alpha,\beta)$ is uniformly continuous independently of $(\alpha,\beta)\in\mathcal A_d\times\mathcal B_d$, as stated in the following lemma which can be derived from standard estimates.
\begin{mlem}\label{lem_reg1}
	$\bar J(\cdot,\alpha,\beta)$ is uniformly continuous independently of $(\alpha,\beta)$. More precisly, there exists $C>0$ such that for all $(\alpha,\beta)\in\mathcal A_d\times\mathcal B_d$ and $z,z'\in\mathbb{R}^{n+1}$, one has
	\begin{equation}\label{eq_lem_reg1}
		\big|\bar J(z,\alpha,\beta)-\bar J(z',\alpha,\beta)\big|\le C(\|z-z'\|+\|z-z'\|^\gamma)
	\end{equation}
	where $0<\gamma\le 1$ is given by Lemma \ref{lem_costcont}.
\end{mlem}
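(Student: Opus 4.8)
The plan is to fix $(\alpha,\beta)\in\mathcal A_d\times\mathcal B_d$, abbreviate by $(u,v):=(u_{\alpha\beta},v_{\alpha\beta})$ the unique pair of controls generated by $(\alpha,\beta)$ via Lemma \ref{SNAD_lem1B}, and set $T:=\mathcal T(z,\alpha,\beta)$, $T':=\mathcal T(z',\alpha,\beta)$ with, say, $T\le T'$. The crucial preliminary observation is that $(u,v)$ depends only on $(\alpha,\beta)$ and not on the initial position, so that $Z^{z,\alpha,\beta}_\cdot=Z^{z,u,v}_\cdot$ and $Z^{z',\alpha,\beta}_\cdot=Z^{z',u,v}_\cdot$ are driven by the \emph{same} controls; hence the trajectory estimates \eqref{eq_assum_reg1}--\eqref{eq_assum_reg2} and the hitting-time bound \eqref{lip_hittime} all apply with common controls. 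I would then decompose
$$\bar J(z,\alpha,\beta)-\bar J(z',\alpha,\beta)=I+II,$$
where $I$ collects the two running-cost integrals and $II:=e^{-\lambda T}V^+(Z^{z,\alpha,\beta}_{T})-e^{-\lambda T'}V^+(Z^{z',\alpha,\beta}_{T'})$ is the terminal part, and estimate each separately.

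For the terminal part I would write $II=e^{-\lambda T}\big[V^+(Z^{z,\alpha,\beta}_{T})-V^+(Z^{z',\alpha,\beta}_{T'})\big]+\big[e^{-\lambda T}-e^{-\lambda T'}\big]V^+(Z^{z',\alpha,\beta}_{T'})$. The second summand is $\le\|\ell\|_\infty\,|T-T'|$, using $\|V^+\|_\infty\le\|\ell\|_\infty/\lambda$ and that $t\mapsto e^{-\lambda t}$ is $\lambda$-Lipschitz, hence $\le C\|z-z'\|$ by \eqref{lip_hittime}. For the first summand I would invoke the H\"older continuity of $V^+$ from Lemma \ref{lem_comp_info} together with $\|Z^{z,\alpha,\beta}_T-Z^{z',\alpha,\beta}_{T'}\|\le\|Z^{z,\alpha,\beta}_T-Z^{z',\alpha,\beta}_T\|+\|Z^{z',\alpha,\beta}_T-Z^{z',\alpha,\beta}_{T'}\|$, controlled by \eqref{eq_assum_reg1}--\eqref{eq_assum_reg2} and \eqref{lip_hittime}. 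The decisive point is that after taking the $\gamma$-th power the dangerous Gronwall factor becomes $e^{-\lambda T}e^{\gamma L_F T}=e^{(\gamma L_F-\lambda)T}\le 1$, because the definition of $\gamma$ in Lemma \ref{lem_costcont} together with $L_F\le L$ forces $\gamma L_F\le\lambda$ in all three regimes; this yields the $\|z-z'\|^\gamma$ contribution.

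For the running-cost part $I=\int_0^T e^{-\lambda t}\big[\ell(Z^{z,\alpha,\beta}_t,u,v)-\ell(Z^{z',\alpha,\beta}_t,u,v)\big]dt-\int_T^{T'}e^{-\lambda t}\ell(Z^{z',\alpha,\beta}_t,u,v)dt$, the overhang integral over $[T,T']$ is $\le\|\ell\|_\infty|T-T'|\le C\|z-z'\|$ by \eqref{lip_hittime}. For the remaining integral I would reproduce the standard estimate behind Lemma \ref{lem_costcont}: cut at a free time $\sigma$, bound the integrand on $[0,\sigma]$ by $L_\ell e^{(L_F-\lambda)t}\|z-z'\|$ via \eqref{eq_assum_reg1}, bound the integral on $[\sigma,T]$ by $2\|\ell\|_\infty e^{-\lambda\sigma}/\lambda$ using boundedness of $\ell$ and the discount, and then optimize in $\sigma$. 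This gives a Lipschitz bound when $L_F<\lambda$ and a $\|z-z'\|^\gamma$ bound (indeed with the possibly larger exponent $\lambda/L_F\ge\gamma$) when $L_F\ge\lambda$. Summing the three contributions produces \eqref{eq_lem_reg1}.

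The main obstacle I anticipate is the uniform-in-$T$ control of the trajectory sensitivity, whose factor $e^{L_F T}$ grows without bound while $T$ is a priori unbounded: it is exactly the interplay between this growth and the discount $e^{-\lambda t}$, mediated by the sharp choice of $\gamma$ in the terminal term and by the cutoff optimization in the running-cost term, that must be handled carefully. A secondary point is the behaviour near the target boundary $\{y=M_0\}$, where $T$ degenerates to $0$ and \eqref{lip_hittime} must be extended by continuity of $\mathcal T$; since the compact support of $\mu_0$ keeps the relevant initial ordinates bounded below and $g>0$ on $(-\infty,M_0]$, the hitting times stay finite and the Lipschitz constant $C_S$ remains uniform there.
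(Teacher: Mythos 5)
Your proof is correct and is exactly the ``standard estimates'' derivation that the paper invokes for this lemma without writing it out: the same decomposition into running-cost and terminal parts, the common pair of controls $(u_{\alpha\beta},v_{\alpha\beta})$ from Lemma \ref{SNAD_lem1B} making \eqref{eq_assum_reg1}--\eqref{eq_assum_reg2} and \eqref{lip_hittime} applicable with identical controls, the H\"older continuity of $V^+$ from Lemma \ref{lem_comp_info}, and the key observation $\gamma L_F\le\lambda$ (valid in all three regimes of Lemma \ref{lem_costcont}) that neutralizes the Gronwall factor $e^{L_FT}$ against the discount. The one caveat --- that $C_S$ in \eqref{lip_hittime} is not uniform as the initial ordinate tends to $-\infty$, so the claimed uniformity over all of $\mathbb{R}^{n+1}$ really requires the restriction to the compactly supported initial data you point out --- is inherited from the paper's own statement and is not a defect of your argument.
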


To obtain the continuity of $V^\pm$ with respect to $\mu_0\in\mathcal P(\mathbb R^{n+1})$, we recall the useful technical result below.
\begin{mlem}\label{lem_regtech}
	Let $\mathbb A$, $\mathbb B$ be arbitrary sets and $f_1,\ f_2$ real valued maps defined on $\mathbb{A}\times \mathbb{B}$ such that for some constant $C>0$, one has
	$$\sup_{(a,b)\in\mathbb A\times\mathbb B}|f_1(a,b)-f_2(a,b)|\le C.$$
	Then $|\inf_{a\in\mathbb A}\sup_{b\in\mathbb B}f_1(a,b)-\inf_{a\in\mathbb A}\sup_{b\in\mathbb B}f_2(a,b)|\le C$.
\end{mlem}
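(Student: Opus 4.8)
The plan is to exploit the monotonicity of the $\sup$ and $\inf$ operations together with the uniform bound $|f_1-f_2|\le C$. Since the hypothesis gives $f_2(a,b)-C\le f_1(a,b)\le f_2(a,b)+C$ for every $(a,b)\in\mathbb A\times\mathbb B$, the strategy is to push these pointwise inequalities through $\sup_b$ and then $\inf_a$, obtaining one inequality, and then to repeat the same argument with the roles of $f_1$ and $f_2$ interchanged to obtain the reverse inequality. Combining the two yields the claimed bound on the absolute value.

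Concretely, I would first fix $a\in\mathbb A$ and $b\in\mathbb B$ and write $f_1(a,b)\le f_2(a,b)+C\le \sup_{b'\in\mathbb B}f_2(a,b')+C$. Since the right-hand side no longer depends on $b$, taking the supremum over $b\in\mathbb B$ on the left gives $\sup_{b\in\mathbb B}f_1(a,b)\le \sup_{b\in\mathbb B}f_2(a,b)+C$. This inequality holds for every $a\in\mathbb A$, so taking the infimum over $a$ on both sides (and using that adding the constant $C$ commutes with $\inf_a$) yields
$$\inf_{a\in\mathbb A}\sup_{b\in\mathbb B}f_1(a,b)\le \inf_{a\in\mathbb A}\sup_{b\in\mathbb B}f_2(a,b)+C.$$

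The hypothesis is symmetric in $f_1$ and $f_2$, so the identical chain of estimates with $f_1$ and $f_2$ swapped produces
$$\inf_{a\in\mathbb A}\sup_{b\in\mathbb B}f_2(a,b)\le \inf_{a\in\mathbb A}\sup_{b\in\mathbb B}f_1(a,b)+C.$$
Reading the two displayed inequalities together gives precisely $\bigl|\inf_a\sup_b f_1-\inf_a\sup_b f_2\bigr|\le C$, which is the assertion.

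I do not expect any genuine obstacle here: the only points requiring a little care are that the supremum of a sum with a constant splits as $\sup_b f_2(a,\cdot)+C$, and likewise that $\inf_a\bigl(g(a)+C\bigr)=\inf_a g(a)+C$, both of which are immediate. One should also note that the argument makes no use of any topological or measurability structure on $\mathbb A$ and $\mathbb B$ and imposes no finiteness on the $\inf$/$\sup$ beyond what the uniform bound already guarantees, so the statement holds for arbitrary sets exactly as claimed.
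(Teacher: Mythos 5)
Your proof is correct: the pointwise bound pushed through $\sup_b$ and then $\inf_a$, followed by exchanging the roles of $f_1$ and $f_2$, is exactly the standard argument, and your side remarks (constants commuting with $\sup$ and $\inf$, no structure needed on $\mathbb A$, $\mathbb B$) are accurate. The paper itself merely recalls this lemma without proof, so there is nothing to diverge from; your argument is the one the paper implicitly relies on.
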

\begin{mprps}\label{prps_regV1}
	Both $V^\pm$ are bounded, and uniformly continuous with respect to the Wasserstein distance $W_2$. More precisely, there exists a constant $C>0$ such that, for any $\mu_1,\mu_2\in\mathcal P(\mathbb R^{n+1})$,
	\begin{equation}
		\big|V^\pm(\mu_1)-V^\pm(\mu_2)\big|\le C(W_2(\mu_1,\mu_2)+W_2(\mu_1,\mu_2)^\gamma)
	\end{equation}
	where $0<\gamma\le 1$ is given by Lemma \ref{lem_costcont}.
\end{mprps}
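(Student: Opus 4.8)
The plan is to exploit the alternative representations of $V^\pm$ from Proposition \ref{prps_AF}, in which the relevant cost $z\mapsto \bar J(z,\alpha,\beta)$ is continuous uniformly in $(\alpha,\beta)$ thanks to Lemma \ref{lem_reg1}. Boundedness is immediate: since $\|\ell\|_\infty<\infty$ and, by Lemma \ref{lem_comp_info}, $V^+$ is bounded, one has $|\bar J(z,\alpha,\beta)|\le \|\ell\|_\infty/\lambda + \|V^+\|_\infty$ uniformly in $(z,\alpha,\beta)$, so that integrating against a probability measure and taking the inf-sup (resp. sup-inf) keeps $V^\pm$ bounded.

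For the continuity estimate, I would fix $\mu_1,\mu_2\in\mathcal P(\mathbb R^{n+1})$ and let $\pi\in\Pi(\mu_1,\mu_2)$ be an optimal coupling realizing $W_2(\mu_1,\mu_2)$. For any fixed $(\alpha,\beta)\in\mathcal A_d\times\mathcal B_d$, the difference of the two integrals can be written as a single integral against $\pi$:
$$\int_{\mathbb R^{n+1}}\bar J(z,\alpha,\beta)d\mu_1(z)-\int_{\mathbb R^{n+1}}\bar J(z',\alpha,\beta)d\mu_2(z')=\int\big[\bar J(z,\alpha,\beta)-\bar J(z',\alpha,\beta)\big]d\pi(z,z').$$
Applying the pointwise estimate \eqref{eq_lem_reg1} of Lemma \ref{lem_reg1} under the integral sign bounds this by $C\int\big(\|z-z'\|+\|z-z'\|^\gamma\big)d\pi$. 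The two moments are then controlled by $W_2$: by Cauchy--Schwarz $\int\|z-z'\|d\pi\le W_2(\mu_1,\mu_2)$, and by Jensen's inequality applied to the concave map $t\mapsto t^{\gamma/2}$ one gets $\int\|z-z'\|^\gamma d\pi\le W_2(\mu_1,\mu_2)^\gamma$. Hence, uniformly in $(\alpha,\beta)$,
$$\Big|\int\bar J(z,\alpha,\beta)d\mu_1(z)-\int\bar J(z,\alpha,\beta)d\mu_2(z)\Big|\le C\big(W_2(\mu_1,\mu_2)+W_2(\mu_1,\mu_2)^\gamma\big).$$

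Finally, I would conclude with Lemma \ref{lem_regtech}: taking $\mathbb A=\mathcal A_d$, $\mathbb B=\mathcal B_d$, and $f_i(\alpha,\beta)=\int\bar J(z,\alpha,\beta)d\mu_i(z)$, the uniform bound above is exactly its hypothesis with constant $C(W_2(\mu_1,\mu_2)+W_2(\mu_1,\mu_2)^\gamma)$, which yields the claim for $V^+$. For $V^-$ the same argument applies after exchanging the roles of the inf and the sup (equivalently, applying the lemma to $-f_i$), since $V^-$ admits the sup-inf representation. I expect the only genuinely delicate point to be the passage from the pointwise H\"older bound on $\bar J$ to a $W_2$ bound, namely the Jensen step turning $\int\|z-z'\|^\gamma d\pi$ into $W_2(\mu_1,\mu_2)^\gamma$ via concavity of $t\mapsto t^{\gamma/2}$; everything else is a routine use of the optimal coupling together with the two cited lemmas.
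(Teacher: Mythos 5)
Your proposal is correct and follows essentially the same route as the paper's proof: both rewrite $V^\pm$ via Proposition \ref{prps_AF}, use an optimal coupling $\pi\in\Pi(\mu_1,\mu_2)$ together with the uniform estimate \eqref{eq_lem_reg1} of Lemma \ref{lem_reg1}, and conclude with Lemma \ref{lem_regtech}. The only cosmetic difference is that you bound $\int\|z-z'\|^\gamma d\pi$ by $W_2(\mu_1,\mu_2)^\gamma$ via Jensen's inequality for the concave map $t\mapsto t^{\gamma/2}$, whereas the paper invokes H\"older's inequality; the two steps are equivalent, and your Jensen argument is valid since $0<\gamma\le 1$.
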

\begin{proof}
	The boundedness of $V^\pm$ follows from that of the cost function. 
	
	Fix $\mu_1,\mu_2\in\mathcal P(\mathbb R^{n+1})$ and let $\pi\in\Pi(\mu_1,\mu_2)$ be an optimal transport plan for $W_2(\mu_1,\mu_2)$. For any $(\alpha,\beta)\in\mathcal A_d\times\mathcal B_d$, one has
	\begin{align*}
		&\Big|\int_{{\R}^{n+1}}\bar J(z,\alpha,\beta)d\mu_1(z)-\int_{{\R}^{n+1}}\bar J(z',\alpha,\beta)d\mu_2(z')\Big|\\
		\le &\int_{{\R}^{n+1}\times{\R}^{n+1}}\big|\bar J(z,\alpha,\beta)-\bar J(z',\alpha,\beta)\big|d\pi(z,z').
	\end{align*}
	It follows further from \eqref{eq_lem_reg1} and H\"{o}lder's inequality that,
	\begin{align*}
		&\Big|\int_{{\R}^{n+1}}\bar J(z,\alpha,\beta)d\mu_1(z)-\int_{{\R}^{n+1}}\bar J(z',\alpha,\beta)d\mu_2(z')\Big|\\
		\le& C\Big(\int_{{\R}^{2n+2}}\|z-z'\|d\pi(z,z') + \int_{{\R}^{2n+2}}\|z-z'\|^{\gamma}d\pi(z,z')\Big)\\
		\le& C\big(W_2(\mu_1,\mu_2)+W_2(\mu_1,\mu_2)^\gamma\big)
	\end{align*}
	Since $(\alpha,\beta)\in\mathcal A_d\times\mathcal B_d$ is arbitrary, we deduce from the above inequality that,
	\begin{equation}\label{eq_prps_regV1A}
		\begin{aligned}
			&\sup_{(\alpha,\beta)\in\mathcal A_d\times\mathcal B_d}\Big|\int_{{\R}^{n+1}}\bar J(z,\alpha,\beta)d\mu_1(z)-\int_{{\R}^{n+1}}\bar J(z',\alpha,\beta)d\mu_2(z')\Big|\\
			\le &C\big(W_2(\mu_1,\mu_2)+W_2(\mu_1,\mu_2)^\gamma\big).
		\end{aligned}
	\end{equation}
	Applying Lemma \ref{lem_regtech} and Proposition \ref{prps_AF}, we obtain both
	\begin{equation}
		\big|V^+(\mu_1)-V^+(\mu_2)\big|\le C(W_2(\mu_1,\mu_2)+W_2(\mu_1,\mu_2)^\gamma),
	\end{equation}
	and 
	\begin{equation}
		\big|V^-(\mu_1)-V^-(\mu_2)\big|\le C(W_2(\mu_1,\mu_2)+W_2(\mu_1,\mu_2)^\gamma).
	\end{equation}
	The proof is complete.
\end{proof}

\section{Extended Value Functions}
In this section we define, for our game $\mathcal{G}(\mu_0)$, the extended value functions similar to those investigated in \cite{J&Q18}, and we study their properties. Such an extension allows us to ``separate" the initial probability measure $\mu_0$ and the evolution of its support as the game progresses, which in turn enables us to obtain dynamic programming principles for the extended values and to write the appropriate Hamilton-Jacobi-Isaacs equation associated with our problem. Recall that $Z=X\times [0,M_0+\eta]\subset{\R}^{n+1}$ where $X$ is a compact subset of ${\R}^n$ and $\eta>0$, and the set of probability measures on $Z$ is denoted by $\Delta(Z)$. Throughout the rest of this paper, in addition to Assumptions \ref{Assum1} and Isaacs' condition \eqref{eq_IC3}, we assume furthermore the following:
\begin{massum}\label{Assum2}
	\begin{enumerate}[label=\roman*)]
		\item $Z$ is an invariant set for \eqref{dym_sys2}, namely 
			$$\forall(t,z,u,v)\in {\R}_+\times Z\times \mathcal{U}\times\mathcal{V},\ Z^{z,u,v}_t\in Z;$$
		\item
			The map $F:{\R}^{n+1}\times\U\times\V\to{\R}^{n+1}$ is Lipschitz continuous in all variables with Lipschitz constant $L_F>0$;
		\item
			$\lambda>L:=\max(L_F,L_\ell)$.
	\end{enumerate}
\end{massum}
	As a direct consequence of Assumptions \ref{Assum2} and Lemma \ref{lem_reg1}, we have the following
\begin{mcor}\label{cor_reg1}
	Under Assumptions \ref{Assum2}, $z\mapsto \bar J(z,\alpha,\beta)$ is Lipschitz continuous uniformly with repect to $(\alpha,\beta)$. More precisly, there exists $C>0$ such that for all $(\alpha,\beta)\in\mathcal A_d\times\mathcal B_d$ and $z,z'\in\mathbb{R}^{n+1}$, one has
	\begin{equation}\label{eq_cor_reg1}
		\big|\bar J(z,\alpha,\beta)-\bar J(z',\alpha,\beta)\big|\le C\|z-z'\|.
	\end{equation}
\end{mcor}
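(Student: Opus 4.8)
The plan is to deduce Corollary \ref{cor_reg1} as an immediate specialization of Lemma \ref{lem_reg1}; the only new ingredient supplied by Assumptions \ref{Assum2} is that the H\"older exponent $\gamma$ can now be taken equal to $1$. Recall that Lemma \ref{lem_reg1} already provides, uniformly over $(\alpha,\beta)\in\mathcal A_d\times\mathcal B_d$, the bound $\big|\bar J(z,\alpha,\beta)-\bar J(z',\alpha,\beta)\big|\le C\big(\|z-z'\|+\|z-z'\|^\gamma\big)$, where $\gamma$ is the exponent furnished by Lemma \ref{lem_costcont}. Thus the entire task reduces to identifying $\gamma$ under the standing hypotheses of this section.

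First I would invoke part iii) of Assumptions \ref{Assum2}, namely $\lambda>L=\max(L_F,L_\ell)$. This is exactly the regime $L<\lambda$ in the trichotomy of Lemma \ref{lem_costcont}, which forces $\gamma=1$. The same value also governs the regularity of the terminal data $V^+$ via Lemma \ref{lem_comp_info}, so that the exponent appearing inside the estimate of Lemma \ref{lem_reg1} is indeed $\gamma=1$ under the current assumptions.

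Substituting $\gamma=1$ into the estimate of Lemma \ref{lem_reg1} collapses its two terms into $\big|\bar J(z,\alpha,\beta)-\bar J(z',\alpha,\beta)\big|\le C\big(\|z-z'\|+\|z-z'\|\big)=2C\|z-z'\|$, and relabelling $2C$ as a new constant yields exactly \eqref{eq_cor_reg1}; since the constant in Lemma \ref{lem_reg1} is already uniform in $(\alpha,\beta)$, so is the resulting Lipschitz bound. I do not expect any genuine obstacle here: the substance of the statement is carried entirely by Lemma \ref{lem_reg1}, and the role of Assumptions \ref{Assum2} is solely to rule out the degenerate exponents $\gamma<1$ that occur when $L\ge\lambda$. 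The only point worth flagging is that parts i) and ii) of Assumptions \ref{Assum2} (invariance of $Z$ and full Lipschitz continuity of $F$), although in force throughout this section, are not actually needed for this particular estimate, which rests only on part iii).
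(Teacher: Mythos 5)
Your proof is correct and takes essentially the same route as the paper, which states Corollary \ref{cor_reg1} without further argument as a direct consequence of Assumptions \ref{Assum2} and Lemma \ref{lem_reg1}; your spelling-out of that step --- part iii) gives $\lambda>L$, hence $\gamma=1$ in the trichotomy of Lemma \ref{lem_costcont}, so the bound of Lemma \ref{lem_reg1} collapses to $2C\|z-z'\|$ --- is exactly what the paper intends. Your side remark that parts i) and ii) of Assumptions \ref{Assum2} play no role in this particular estimate is also accurate.
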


Recall that for any $(A,B)\in\mathcal A_s\times\mathcal B_s$ and any $\Phi\in L^2_{\mu_0}(Z,Z)$, the map 
	$$(A,B):\mathbb R^{n+1}\to\mathcal U\times\mathcal V:z\mapsto(u_{\Phi(z)},v_{\Phi(z)})$$ 
	is measurable with $(u_{z},v_{z})$ defined as in Lemma \ref{SNAD_lem2}. Let us introduce the following notion of extended value functions.
	
\begin{mdef}
	Let $\mu_0 \in\Delta(Z)$, $\Phi\in L^2_{\mu_0}(Z,Z)$, we define:
	\begin{align*}
		\mathcal V^+(\Phi,\mu_0)=\inf_{A\in\mathcal A_s}\sup_{B\in\mathcal B_s}\int_{Z}d\mu_0(z)\Big[\int_0^\infty e^{-\lambda t}\ell\big(Z^{\Phi(z),(A,B)(z)}_t,(A,B)(z)(t)\big)dt\Big],\\
		\mathcal V^-(\Phi,\mu_0)=\sup_{B\in\mathcal B_s}\inf_{A\in\mathcal A_s}\int_{Z}d\mu_0(z)\Big[\int_0^\infty e^{-\lambda t}\ell\big(Z^{\Phi(z),(A,B)(z)}_t,(A,B)(z)(t)\big)dt\Big].
	\end{align*}
\end{mdef}

	To simplify notations, for any $(A,B)\in\mathcal A_s\times\mathcal B_s$ and for any $(\alpha,\beta)\in\mathcal A_d\times\mathcal B_d$, we write
	\begin{align*}
		\mathcal J(\Phi,\mu_0,A,B)&:=\int_{Z}J(\Phi(z),u_{\Phi(z)},v_{\Phi(z)})d\mu_0(z),\\
		\bar{\mathcal J}(\Phi,\mu_0,\alpha,\beta)&:=\int_{Z}\bar J(\Phi(z),u_{\alpha\beta},\beta_{\alpha\beta})d\mu_0(z),
	\end{align*}
	where the map $z\mapsto(u_z,v_z)$ and the pair $(u_{\alpha\beta},v_{\alpha\beta})$ are respectively defined as in Lemma \ref{SNAD_lem2} and Lemma \ref{SNAD_lem1B}. In this regard, the extended value functions can be rewritten as
	\begin{align*}
		\mathcal V^+(\Phi,\mu_0)=\inf_{A\in\mathcal A_s}\sup_{B\in\mathcal B_s}\mathcal J(\Phi,\mu_0,A,B),\ 
		\mathcal V^-(\Phi,\mu_0)=\sup_{B\in\mathcal B_s}\inf_{A\in\mathcal A_s}\mathcal J(\Phi,\mu_0,A,B).
	\end{align*}

\begin{mrmk}
	The definitions of the extended values in this paper are slightly different from those in \cite{J&Q18}. In our case, the controls generated by SNAD strategies depend on the unknown initial position through signal revelation during game play, while in \cite{J&Q18}, the controls depend on private signals communicated before the game commences.
\end{mrmk}

\begin{mlem}\label{lem_extvalue}
For all $\mu_0\in\Delta(Z)$ and $\Phi\in L^2_{\mu_0}(Z,Z)$, one has
\begin{enumerate}[label=\roman*)]
	\item
		$\mathcal V^\pm(Id,\mu_0)=V^\pm(\mu_0)$;
	\item
		$\mathcal V^\pm(\Phi,\mu_0)= V^\pm(\Phi\sharp\mu_0)$;
	\item
		the reformation of $\mathcal V^\pm$:
		\begin{align*}
			\mathcal V^+	(\Phi,\mu_0)=&\inf_{\alpha\in\mathcal A_d}	\sup_{\beta\in\mathcal{B}_d}\bar{\mathcal J}(\Phi,\mu_0,\alpha,\beta)=\inf_{\alpha\in\mathcal A_d}	\sup_{v\in\mathcal V}\bar{\mathcal J}(\Phi,\mu_0,\alpha(v),v),\\
			\mathcal V^-(\Phi,\mu_0)=&\sup_{\beta\in\mathcal B_d}\inf_{\alpha\in\mathcal{A}_d}\bar{\mathcal J}(\Phi,\mu_0,\alpha,\beta)=\sup_{\beta\in\mathcal B_d}\inf_{u\in\mathcal U}\bar{\mathcal J}(\Phi,\mu_0,u,\beta(u)).
	\end{align*}
\end{enumerate}
\end{mlem}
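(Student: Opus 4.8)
The plan is to reduce all three statements to the change-of-variables (push-forward) formula, combined with the alternative representations already obtained in Proposition \ref{prps_AF} and Corollary \ref{cor_AF}. The crucial observation is that, for each \emph{fixed} pair of SNAD strategies $(A,B)$, the integrand appearing in $\mathcal{V}^\pm(\Phi,\mu_0)$ is exactly the integrand defining $V^\pm$ composed with $\Phi$; hence each term transforms by push-forward, and since the strategy spaces $\mathcal{A}_s,\mathcal{B}_s$ are the same in both problems, the transformation commutes with the outer $\inf\sup$ and $\sup\inf$. No $\varepsilon$-optimality or approximation is required.

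First I would establish ii). Fix $(A,B)\in\mathcal{A}_s\times\mathcal{B}_s$. By Lemma \ref{SNAD_lem2} the map $\zeta\mapsto(u_\zeta,v_\zeta)$ is Borel measurable, so by the continuity of the cost (Lemma \ref{lem_costcont}) the function $\zeta\mapsto J(\zeta,u_\zeta,v_\zeta)$ is Borel measurable on $\mathbb{R}^{n+1}$. Choosing a Borel representative of $\Phi$ and applying the change-of-variables formula for the push-forward measure,
\begin{align*}
\mathcal{J}(\Phi,\mu_0,A,B)&=\int_Z J\big(\Phi(z),u_{\Phi(z)},v_{\Phi(z)}\big)\,d\mu_0(z)\\
&=\int_{\mathbb{R}^{n+1}} J(\zeta,u_\zeta,v_\zeta)\,d(\Phi\sharp\mu_0)(\zeta)=J(\Phi\sharp\mu_0,A,B).
\end{align*}
Since $\Phi$ takes values in the compact set $Z$, the measure $\Phi\sharp\mu_0$ lies in $\mathcal{P}(\mathbb{R}^{n+1})$, so the right-hand side is well defined. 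Taking $\inf_A\sup_B$ (resp. $\sup_B\inf_A$) on both sides yields $\mathcal{V}^\pm(\Phi,\mu_0)=V^\pm(\Phi\sharp\mu_0)$, which is ii). Statement i) is then immediate on taking $\Phi=Id$, because $Id\sharp\mu_0=\mu_0$ and $\mu_0$ is supported in $Z$.

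Then I would deduce iii) by feeding ii) into the alternative representations. By Proposition \ref{prps_AF} applied to $\Phi\sharp\mu_0\in\mathcal{P}(\mathbb{R}^{n+1})$, one has $V^+(\Phi\sharp\mu_0)=\inf_{\alpha}\sup_{\beta}\int_{\mathbb{R}^{n+1}}\bar J(\zeta,\alpha,\beta)\,d(\Phi\sharp\mu_0)(\zeta)$. For each fixed $(\alpha,\beta)\in\mathcal{A}_d\times\mathcal{B}_d$, the map $\zeta\mapsto\bar J(\zeta,\alpha,\beta)$ is continuous (hence Borel) by Lemma \ref{lem_reg1}, so the push-forward formula again gives $\int_{\mathbb{R}^{n+1}}\bar J(\zeta,\alpha,\beta)\,d(\Phi\sharp\mu_0)(\zeta)=\int_Z\bar J(\Phi(z),\alpha,\beta)\,d\mu_0(z)=\bar{\mathcal{J}}(\Phi,\mu_0,\alpha,\beta)$. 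Combining with ii) gives the first identity $\mathcal{V}^+(\Phi,\mu_0)=\inf_\alpha\sup_\beta\bar{\mathcal{J}}(\Phi,\mu_0,\alpha,\beta)$; substituting Corollary \ref{cor_AF} in place of Proposition \ref{prps_AF} gives the second identity with $\sup_{v\in\mathcal{V}}$. The formulas for $\mathcal{V}^-$ follow symmetrically.

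The argument is essentially bookkeeping, and the only point demanding genuine care is measurability: one must check that the composite integrands $\zeta\mapsto J(\zeta,u_\zeta,v_\zeta)$ and $\zeta\mapsto\bar J(\zeta,\alpha,\beta)$ are Borel measurable so that the change-of-variables formula is legitimate, and that $\Phi\sharp\mu_0$ has compact support so that $V^\pm(\Phi\sharp\mu_0)$ is defined. Both are secured by the measurability assertion of Lemma \ref{SNAD_lem2}, the continuity in Lemmas \ref{lem_costcont} and \ref{lem_reg1}, and the fact that $\Phi$ maps into the compact set $Z$. Because the transformation holds exactly for every fixed strategy pair, it commutes with the outer optimizations, and no limiting procedure is needed.
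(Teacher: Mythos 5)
Your proposal is correct and follows essentially the same route as the paper: part ii) via the change-of-variables formula for $\Phi\sharp\mu_0$ applied strategy-pair by strategy-pair (so the identity commutes with the outer $\inf\sup$ and $\sup\inf$), part i) as the special case $\Phi=Id$, and part iii) by combining ii) with Proposition \ref{prps_AF} and Corollary \ref{cor_AF}. Your added attention to the Borel measurability of $\zeta\mapsto(u_\zeta,v_\zeta)$ (from Lemma \ref{SNAD_lem2}) and to the compact support of $\Phi\sharp\mu_0$ merely makes explicit what the paper leaves implicit.
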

\begin{proof}
	i) is a direct consequence of the definition of $\mathcal V^\pm$, and ii) follows from the change of variable formula since, given any pair of SNAD strategies $(A,B)$,
	\begin{align*}
		\mathcal J(\Phi,\mu_0,A,B)=\int_{Z}J(\Phi(z),u_{\Phi(z)},v_{\Phi(z)})d\mu_0(z)=\int_{Z}J(z,u_{z},v_{z})d\Phi\sharp\mu_0(z).
	\end{align*}
	Therefore 
	\begin{align*}
		\mathcal V^+(\Phi,\mu_0)&=\inf_{A\in\mathcal A_s}\sup_{B\in\mathcal B_s}\mathcal J(\Phi,\mu_0,A,B)\\
		&=\inf_{A\in\mathcal A_s}\sup_{B\in\mathcal B_s}\int_{Z}J(z,A,B)d\Phi\sharp\mu_0(z)=V^+(\Phi\sharp\mu_0).
	\end{align*}
	Similarly, we can prove that $\mathcal V^-(\Phi,\mu_0)=V^-(\Phi\sharp\mu_0)$. Finally, iii) follows from ii) and Corollary \ref{cor_AF}. The proof is complete.
\end{proof}

\subsection{Regularity of the Extended Value Functions}
In this subsection, we state and prove several lemmas which establish the continuity of the extended value functions $\mathcal V^\pm$ and provide us some useful estimates.
\begin{mlem}\label{lem_reg2}
	Under Assumptions \ref{Assum2}, there exists $C>0$ such that $\forall \mu_0\in \Delta(Z)$ and $\forall\Phi,\Psi\in L^2_{\mu_0}(Z,Z)$,
	\begin{equation*}
		|\mathcal V^\pm(\Phi,\mu_0)-\mathcal V^\pm(\Psi,\mu_0)|\le C\int_{z}\big\|\Phi(z)-\Psi(z)\big\|d\mu_0(z).
	\end{equation*}
\end{mlem}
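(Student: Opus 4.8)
The plan is to reduce the estimate to the pointwise Lipschitz bound on $\bar J$ furnished by Corollary \ref{cor_reg1}, and then to transport that bound through the $\inf$--$\sup$ operations by means of the elementary Lemma \ref{lem_regtech}. First I would invoke the reformulation in part iii) of Lemma \ref{lem_extvalue}, which expresses
$$\mathcal V^+(\Phi,\mu_0)=\inf_{\alpha\in\mathcal A_d}\sup_{\beta\in\mathcal B_d}\bar{\mathcal J}(\Phi,\mu_0,\alpha,\beta),\qquad \mathcal V^-(\Phi,\mu_0)=\sup_{\beta\in\mathcal B_d}\inf_{\alpha\in\mathcal A_d}\bar{\mathcal J}(\Phi,\mu_0,\alpha,\beta),$$
with $\bar{\mathcal J}(\Phi,\mu_0,\alpha,\beta)=\int_Z \bar J(\Phi(z),\alpha,\beta)\,d\mu_0(z)$ in the notation of Proposition \ref{prps_AF}. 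This is precisely the point of introducing the alternative form: it replaces the SNAD-strategy formulation (for which $z\mapsto(u_z,v_z)$ need not be continuous) by an integral of the uniformly Lipschitz functional $\bar J$.

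The second and central step is the pointwise estimate. For fixed $(\alpha,\beta)\in\mathcal A_d\times\mathcal B_d$ I would write
$$\big|\bar{\mathcal J}(\Phi,\mu_0,\alpha,\beta)-\bar{\mathcal J}(\Psi,\mu_0,\alpha,\beta)\big|\le \int_Z \big|\bar J(\Phi(z),\alpha,\beta)-\bar J(\Psi(z),\alpha,\beta)\big|\,d\mu_0(z),$$
and apply Corollary \ref{cor_reg1}, valid under Assumptions \ref{Assum2}, to the integrand, obtaining $C\int_Z\|\Phi(z)-\Psi(z)\|\,d\mu_0(z)$ with $C$ independent of $(\alpha,\beta)$. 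One checks that the integrals are finite since $\bar J$ is bounded (boundedness of $\ell$ together with that of $V^+$ from Lemma \ref{lem_comp_info}) and $\Phi,\Psi\in L^2_{\mu_0}(Z,Z)\subset L^1_{\mu_0}(Z,Z)$ as $Z$ is compact. Taking the supremum over $(\alpha,\beta)$ then gives
$$\sup_{(\alpha,\beta)\in\mathcal A_d\times\mathcal B_d}\big|\bar{\mathcal J}(\Phi,\mu_0,\alpha,\beta)-\bar{\mathcal J}(\Psi,\mu_0,\alpha,\beta)\big|\le C\int_Z\|\Phi(z)-\Psi(z)\|\,d\mu_0(z).$$

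Finally I would feed this uniform bound into Lemma \ref{lem_regtech}. For $\mathcal V^+$ take $\mathbb A=\mathcal A_d$, $\mathbb B=\mathcal B_d$, $f_1=\bar{\mathcal J}(\Phi,\mu_0,\cdot,\cdot)$ and $f_2=\bar{\mathcal J}(\Psi,\mu_0,\cdot,\cdot)$ to conclude the asserted inequality for $\mathcal V^+$. For $\mathcal V^-$, which is a $\sup$--$\inf$, I would apply the same lemma to $-f_1,-f_2$ with the roles of $\mathbb A$ and $\mathbb B$ interchanged, using the identity $\sup_\beta\inf_\alpha g=-\inf_\beta\sup_\alpha(-g)$ and the fact that the uniform bound is invariant under negation; this yields the identical estimate for $\mathcal V^-$. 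There is essentially no hard step: all the substance is packaged in Corollary \ref{cor_reg1}, and the only points requiring care are the uniformity of $C$ over strategies and the routine integrability check, after which Lemma \ref{lem_regtech} transfers the bound through the game operators mechanically.
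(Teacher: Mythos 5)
Your proposal is correct and follows essentially the same route as the paper's proof: rewrite $\mathcal V^\pm$ via part iii) of Lemma \ref{lem_extvalue}, apply the uniform Lipschitz bound of Corollary \ref{cor_reg1} inside the integral, and transfer the resulting uniform estimate through the $\inf$--$\sup$ (and $\sup$--$\inf$) operators via Lemma \ref{lem_regtech}. Your added remarks on integrability and on handling $\mathcal V^-$ by negation are routine points the paper leaves implicit.
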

\begin{proof}
	By Lemma \ref{lem_regtech} and Lemma \ref{lem_extvalue}, $\forall \mu_0\in \Delta(Z)$ and $\forall\Phi,\Psi\in L^2_{\mu_0}(Z,Z)$,
	\begin{equation}
		|\mathcal V^\pm(\Phi,\mu_0)-\mathcal V^\pm(\Psi,\mu_0)|
		\le \sup_{\alpha\in\mathcal{A}_d,\beta\in\mathcal{B}_d}\int_{Z}\big|\bar J(\Phi(z),\alpha,\beta)-\bar J(\Psi(z),\alpha,\beta)\big|d\mu_0(z).
	\end{equation}
	It follows from Corollary \ref{cor_reg1} that there exists $C>0$ such that
	\begin{equation}
		\begin{aligned}
			|\mathcal V^\pm(\Phi,\mu_0)-\mathcal V^\pm(\Psi,\mu_0)|
			\le &C\int_{Z}\|\Phi(z)-\Psi(z)\|d\mu_0(z).
		\end{aligned}
	\end{equation}
	The proof is complete.
\end{proof}
An immediate consequence of the above lemma is the following:
\begin{mcor}\label{cor_reg2}
	Under Assumptions \ref{Assum2}, for all $\mu_0\in\Delta(Z)$, both maps $\Phi\mapsto \mathcal V^{\pm}(\Phi,\mu_0)$ are bounded and Lipschitz continuous from $C(Z,Z)$ to $\R$ with respect to the infinity norm $\|\cdot\|_{\infty}$, i.e. there exists some constant $C>0$ such that for all $\Phi,\Psi\in C(Z,Z)$,
	\begin{equation*}
		|\mathcal V^\pm(\Phi,\mu_0)-\mathcal V^\pm(\Psi,\mu_0)|\le C\big\|\Phi-\Psi\big\|_\infty.
	\end{equation*}
\end{mcor}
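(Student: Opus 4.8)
The plan is to derive the corollary directly from Lemma \ref{lem_reg2}, the only additional ingredients being the compactness of $Z$ and the fact that $\mu_0$ is a probability measure. I would treat boundedness and Lipschitz continuity separately.

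For boundedness, I would observe that the running payoff is uniformly controlled: for every admissible pair $(u,v)$ and every initial state one has $|J(\cdot,u,v)|\le \|\ell\|_\infty/\lambda$, since $\ell$ is bounded and $\lambda>0$. Taking infima and suprema over strategies and integrating against the probability measure $\mu_0$ preserves this bound, so $|\mathcal V^\pm(\Phi,\mu_0)|\le \|\ell\|_\infty/\lambda$ for every $\Phi\in C(Z,Z)$. Equivalently, one may invoke Lemma \ref{lem_extvalue} ii) together with the boundedness of $V^\pm$ from Proposition \ref{prps_regV1}. Either way, the map $\Phi\mapsto\mathcal V^\pm(\Phi,\mu_0)$ is bounded on $C(Z,Z)$.

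For the Lipschitz estimate, I would first note that every $\Phi\in C(Z,Z)$ is a bounded measurable map, as $Z$ is compact, and hence belongs to $L^2_{\mu_0}(Z,Z)$; thus Lemma \ref{lem_reg2} applies to any pair $\Phi,\Psi\in C(Z,Z)$ and yields, with the constant $C>0$ furnished there,
\begin{equation*}
	|\mathcal V^\pm(\Phi,\mu_0)-\mathcal V^\pm(\Psi,\mu_0)|\le C\int_{Z}\|\Phi(z)-\Psi(z)\|\,d\mu_0(z).
\end{equation*}
It then remains only to dominate the $L^1_{\mu_0}$-norm on the right by the uniform norm: since $\|\Phi(z)-\Psi(z)\|\le \|\Phi-\Psi\|_\infty$ for every $z\in Z$ and $\mu_0(Z)=1$, one has $\int_{Z}\|\Phi(z)-\Psi(z)\|\,d\mu_0(z)\le \|\Phi-\Psi\|_\infty$, whence $|\mathcal V^\pm(\Phi,\mu_0)-\mathcal V^\pm(\Psi,\mu_0)|\le C\|\Phi-\Psi\|_\infty$ with the same constant $C$.

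I do not expect any genuine obstacle: the statement is a soft consequence of Lemma \ref{lem_reg2}, and the only points to verify are the inclusion $C(Z,Z)\subset L^2_{\mu_0}(Z,Z)$ and the normalization $\mu_0(Z)=1$, both of which are immediate from the compactness of $Z$ and from $\mu_0$ being a probability measure. In particular, the constant $C$ is inherited unchanged from Lemma \ref{lem_reg2} and is therefore independent of $\mu_0$, which is precisely what is needed for the uniform statement.
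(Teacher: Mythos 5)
Your proposal is correct and matches the paper's route: the paper states Corollary \ref{cor_reg2} as an immediate consequence of Lemma \ref{lem_reg2}, and your argument supplies exactly the two missing observations, namely $C(Z,Z)\subset L^2_{\mu_0}(Z,Z)$ by compactness of $Z$ and the bound $\int_{Z}\|\Phi(z)-\Psi(z)\|\,d\mu_0(z)\le\|\Phi-\Psi\|_\infty$ since $\mu_0$ is a probability measure, with boundedness following from $|J|\le\|\ell\|_\infty/\lambda$. Your remark that the constant $C$ is inherited from Lemma \ref{lem_reg2} uniformly in $\mu_0$ is also consistent with the lemma's statement.
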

\begin{mlem}\label{lem_reg3}
	Under Assumptions \ref{Assum2}, there exists $C>0$ such that for all Lipschitz continuous map $\Phi:Z\to Z$, and for any $\mu_1,\mu_2\in \Delta(Z)$ 
	\begin{equation*}
		|\mathcal V^\pm(\Phi,\mu_1)-\mathcal V^\pm(\Phi,\mu_2)|\le Lip(\Phi)C W_2(\mu_1,\mu_2),
	\end{equation*}
	where $Lip(\Phi)>0$ denotes the Lipschitz constant of $\Phi$.
\end{mlem}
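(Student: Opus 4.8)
The plan is to exploit the reformulation in Lemma \ref{lem_extvalue} iii), which expresses $\mathcal V^\pm(\Phi,\mu_0)$ as an inf-sup (resp. sup-inf) over NAD strategies of the quantity $\bar{\mathcal J}(\Phi,\mu_0,\alpha,\beta)=\int_{Z}\bar J(\Phi(z),\alpha,\beta)d\mu_0(z)$, and then to invoke the elementary inf-sup stability result of Lemma \ref{lem_regtech}. Concretely, the whole problem reduces to producing a single constant bound, uniform over $(\alpha,\beta)\in\mathcal A_d\times\mathcal B_d$, on the difference $\big|\bar{\mathcal J}(\Phi,\mu_1,\alpha,\beta)-\bar{\mathcal J}(\Phi,\mu_2,\alpha,\beta)\big|$. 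Once such a uniform bound of the form $Lip(\Phi)\,C\,W_2(\mu_1,\mu_2)$ is in hand, Lemma \ref{lem_regtech} transfers it verbatim to $\mathcal V^+$, and its symmetric analogue (writing $\sup_\beta\inf_\alpha f=-\inf_\beta\sup_\alpha(-f)$) transfers it to $\mathcal V^-$.

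To obtain the uniform bound, I would first fix $(\alpha,\beta)$ and recall from Corollary \ref{cor_reg1} that, under Assumptions \ref{Assum2}, the map $z\mapsto\bar J(z,\alpha,\beta)$ is $C$-Lipschitz with $C$ independent of $(\alpha,\beta)$. Since $\Phi$ is Lipschitz, the composition $z\mapsto\bar J(\Phi(z),\alpha,\beta)$ is then Lipschitz with constant $C\,Lip(\Phi)$, again uniformly in $(\alpha,\beta)$, because $\big|\bar J(\Phi(z),\alpha,\beta)-\bar J(\Phi(z'),\alpha,\beta)\big|\le C\|\Phi(z)-\Phi(z')\|\le C\,Lip(\Phi)\|z-z'\|$. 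I would then pick an optimal transport plan $\pi\in\Pi(\mu_1,\mu_2)$ for $W_2(\mu_1,\mu_2)$ and estimate
\begin{equation*}
	\big|\bar{\mathcal J}(\Phi,\mu_1,\alpha,\beta)-\bar{\mathcal J}(\Phi,\mu_2,\alpha,\beta)\big|\le \int_{Z\times Z}\big|\bar J(\Phi(z),\alpha,\beta)-\bar J(\Phi(z'),\alpha,\beta)\big|d\pi(z,z').
\end{equation*}
Bounding the integrand by $C\,Lip(\Phi)\|z-z'\|$ and applying the Cauchy--Schwarz inequality to pass from the first to the second moment of $\pi$ gives
\begin{equation*}
	\int_{Z\times Z}\|z-z'\|d\pi(z,z')\le\Big(\int_{Z\times Z}\|z-z'\|^2 d\pi(z,z')\Big)^{1/2}=W_2(\mu_1,\mu_2),
\end{equation*}
which yields exactly $\big|\bar{\mathcal J}(\Phi,\mu_1,\alpha,\beta)-\bar{\mathcal J}(\Phi,\mu_2,\alpha,\beta)\big|\le C\,Lip(\Phi)\,W_2(\mu_1,\mu_2)$, uniformly in $(\alpha,\beta)$.

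There is no serious obstacle here; the argument is essentially the $W_2$-analogue of Proposition \ref{prps_regV1}, with the extra factor $Lip(\Phi)$ entering through the composition. The only two points requiring a little care are the use of Cauchy--Schwarz to convert the naturally arising first-order transport cost into the $W_2$-distance (this is why the hypothesis $\Phi$ Lipschitz, rather than merely $\Phi\in L^2_{\mu_0}$, is what lets us vary the measures), and the fact that Lemma \ref{lem_regtech} is stated for an inf-sup, so the $\mathcal V^-$ case must be handled by its symmetric counterpart. Both are routine, and the conclusion then follows by combining the uniform estimate with Lemma \ref{lem_regtech} and Lemma \ref{lem_extvalue} iii).
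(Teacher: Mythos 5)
Your proposal is correct and takes essentially the same route as the paper's own proof: both fix an optimal transport plan $\pi\in\Pi(\mu_1,\mu_2)$, bound the integrand $\big|\bar J(\Phi(z),\alpha,\beta)-\bar J(\Phi(z'),\alpha,\beta)\big|$ by $C\,Lip(\Phi)\|z-z'\|$ via Corollary \ref{cor_reg1}, pass from the first-order transport cost to $W_2(\mu_1,\mu_2)$ by the Cauchy--Schwarz inequality, and conclude uniformly in $(\alpha,\beta)$ with Lemma \ref{lem_regtech}. Your side remarks (the symmetric treatment of the sup-inf for $\mathcal V^-$ and the role of the Lipschitz hypothesis on $\Phi$) are consistent with how the paper handles these points implicitly.
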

\begin{proof}
	The proof is similar to that of Proposition \ref{prps_regV1}. Let us fix $\Phi:Z\to Z$ Lipschitz continuous with Lipschitz constant $Lip(\Phi)>0$. For $\mu_1,\mu_2\in\Delta(Z)$, let $\pi\in\Pi(\mu_1,\mu_2)$ be an optimal transport plan for $W_2(\mu_1,\mu_2)$. For any $(\alpha,\beta)\in\mathcal{A}_d\times\mathcal{B}_d$, one has
	\begin{align*}
		&\Big|\int_{Z}\bar J(\Phi(z),\alpha,\beta)d\mu_1(z)-\int_{Z}\bar J(\Phi(z'),\alpha,\beta)d\mu_2(z')\Big|\\
		\le &\int_{Z^2}\big|\bar J(\Phi(z),\alpha,\beta)-\bar J(\Phi(z'),\alpha,\beta)\big|d\pi(z,z').
	\end{align*}
	It follows further from Corollary \ref{cor_reg1} and the Cauchy-Schwarz inequality that,
	\begin{align*}
		\Big|\int_{{\R}^{n+1}}\bar J(z,\alpha,\beta)d\mu_1(z)-\int_{{\R}^{n+1}}\bar J(z',\alpha,\beta)d\mu_2(z')\Big|
		\le& C Lip(\Phi)\int_{Z^2}\|z-z'\|d\pi(z,z') \\
		\le& C Lip(\Phi) W_2(\mu_1,\mu_2)
	\end{align*}
	Since $(\alpha,\beta)\in\mathcal A_d\times\mathcal B_d$ in the above inequality is arbitrary, we apply Lemma \ref{lem_regtech} and get
	\begin{equation*}
		|\mathcal V^\pm(\Phi,\mu_1)-\mathcal V^\pm(\Phi,\mu_2)|\le Lip(\Phi)C W_2(\mu_1,\mu_2),
	\end{equation*}
	The proof is complete.
\end{proof}
Finally, we state the follow lemma which, together with Lemma \ref{cor_reg2}, implies the continuity of $\mathcal{V}^\pm: C(Z,Z)\times\Delta(Z)\to\R$. We refer interested readers to \cite{J&Q18} for a detailed proof of the result.
\begin{mlem}\label{lem_reg4}
	Under Assumptions \ref{Assum2}, for all $\Phi\in C(Z,Z)$, both the maps $\mu\mapsto \mathcal{V}^\pm(\Phi,\mu)$ are uniformly continuous on $\Delta(Z)$ with respect to the $W_2$-distance.
\end{mlem}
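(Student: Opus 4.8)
The plan is to reduce the general case $\Phi\in C(Z,Z)$ to the Lipschitz case already settled in Lemma~\ref{lem_reg3}, exploiting that the sup-norm Lipschitz estimate of Corollary~\ref{cor_reg2} holds with a constant independent of $\mu$. Fix $\Phi\in C(Z,Z)$; since $Z$ is compact, $\Phi$ is uniformly continuous, so one can produce a sequence of Lipschitz maps $\Phi_k$ converging uniformly to $\Phi$. A convenient construction is the componentwise inf-convolution $\Phi_k^{(i)}(z)=\inf_{w\in Z}\{\Phi^{(i)}(w)+k\|z-w\|\}$, whose coordinates are $k$-Lipschitz, so that $Lip(\Phi_k)\le\sqrt{n+1}\,k$, and for which $\|\Phi_k-\Phi\|_\infty\to 0$ follows from the uniform continuity of $\Phi$ by the standard Yosida estimate. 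Only these two facts about $\Phi_k$ will be used. The codomain constraint is harmless: each estimate invoked below ultimately rests on the Lipschitz continuity of $z\mapsto\bar J(z,\alpha,\beta)$ on all of $\mathbb{R}^{n+1}$ (Corollary~\ref{cor_reg1}), so that Corollaries~\ref{cor_reg1}, \ref{cor_reg2} and Lemma~\ref{lem_reg3} extend verbatim to maps $\Phi:Z\to\mathbb{R}^{n+1}$; alternatively one composes $\Phi_k$ with a Lipschitz retraction of $\mathbb{R}^{n+1}$ onto $Z$.

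With such a sequence in hand, for any $\mu_1,\mu_2\in\Delta(Z)$ I would split, via the triangle inequality,
\begin{align*}
	|\mathcal V^\pm(\Phi,\mu_1)-\mathcal V^\pm(\Phi,\mu_2)|
	& \le |\mathcal V^\pm(\Phi,\mu_1)-\mathcal V^\pm(\Phi_k,\mu_1)|\\
	& \quad +|\mathcal V^\pm(\Phi_k,\mu_1)-\mathcal V^\pm(\Phi_k,\mu_2)|\\
	& \quad +|\mathcal V^\pm(\Phi_k,\mu_2)-\mathcal V^\pm(\Phi,\mu_2)|.
\end{align*}
By Corollary~\ref{cor_reg2} the first and third terms are each at most $C\|\Phi-\Phi_k\|_\infty$, with $C$ independent of $\mu_1,\mu_2$ and $k$, while Lemma~\ref{lem_reg3} bounds the middle term by $Lip(\Phi_k)\,C\,W_2(\mu_1,\mu_2)\le C\sqrt{n+1}\,k\,W_2(\mu_1,\mu_2)$.

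To conclude, given $\varepsilon>0$ I would first fix $k=k(\varepsilon,\Phi)$ large enough that $2C\|\Phi-\Phi_k\|_\infty<\varepsilon/2$; with this $k$ now frozen, $Lip(\Phi_k)$ is a fixed number, so putting $\delta=\delta(\varepsilon,\Phi):=\varepsilon/(2C\sqrt{n+1}\,k)>0$ forces the middle term below $\varepsilon/2$ whenever $W_2(\mu_1,\mu_2)<\delta$. Combining the three bounds gives $|\mathcal V^\pm(\Phi,\mu_1)-\mathcal V^\pm(\Phi,\mu_2)|<\varepsilon$ as soon as $W_2(\mu_1,\mu_2)<\delta$. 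As $\delta$ depends only on $\varepsilon$ and $\Phi$, and not on $\mu_1,\mu_2$, this is exactly the asserted uniform continuity of $\mu\mapsto\mathcal V^\pm(\Phi,\mu)$ on $\Delta(Z)$.

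The main obstacle is the approximation step rather than the estimates: one needs Lipschitz approximants with an explicit, controllable Lipschitz constant, so that the two competing bounds — small in sup-norm but large in Lipschitz constant — can be balanced in the right order. The inf-convolution supplies precisely this trade-off, its Lipschitz constant growing linearly in $k$ while its sup-norm error is governed by the modulus of continuity of $\Phi$; fixing $k$ before $\delta$ is what avoids circularity. I expect this ordering of quantifiers, together with the routine verification that the auxiliary estimates survive the passage to $\mathbb{R}^{n+1}$-valued approximants, to be the only genuine subtlety.
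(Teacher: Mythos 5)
The paper contains no proof of Lemma \ref{lem_reg4} at all: it simply refers the reader to \cite{J&Q18}. Your argument is therefore a self-contained substitute, and it is correct; it is also the natural one given the surrounding material, interpolating between Corollary \ref{cor_reg2} (sup-norm Lipschitz estimate in $\Phi$, whose constant is uniform in $\mu$ by Lemma \ref{lem_reg2}) and Lemma \ref{lem_reg3} (Lipschitz dependence on $\mu$ in $W_2$, with constant proportional to $Lip(\Phi)$), with the inf-convolution supplying approximants whose Lipschitz constant grows in a controlled way. Your ordering of quantifiers (freeze $k=k(\varepsilon,\Phi)$ first, then choose $\delta$) is exactly what makes the balance work, and it matches the scheme of the cited reference. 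A small remark: since $Z$ is compact, $(\Delta(Z),W_2)$ is itself compact, so mere continuity in $\mu$ would already upgrade to uniform continuity; your direct argument is stronger in that it produces an explicit modulus depending only on $\varepsilon$ and $\Phi$.

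Two caveats on the codomain issue, which you correctly identify as the only delicate point. First, your fallback of composing $\Phi_k$ with a Lipschitz retraction of $\mathbb{R}^{n+1}$ onto $Z$ is not available in general: $Z=X\times[0,M_0+\eta]$ with $X$ merely compact, and a non-convex compact set (e.g.\ a sphere) need not be even a continuous retract of $\mathbb{R}^{n}$; nearest-point projection is Lipschitz only under convexity, which the paper does not assume. So you must rely on your primary route. Second, that primary route --- working with $\Phi_k\in C(Z,\mathbb{R}^{n+1})$ --- is sound: $\mathcal{V}^\pm(\Phi_k,\mu)=V^\pm(\Phi_k\sharp\mu)$ still makes sense because $\Phi_k\sharp\mu\in\mathcal{P}(\mathbb{R}^{n+1})$ (the inf-convolution keeps $\Phi_k(Z)$ in a fixed compact box, and in fact its last coordinate stays in $[0,M_0+\eta]$), and Corollary \ref{cor_reg1} is stated for all $z,z'\in\mathbb{R}^{n+1}$. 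But note that the representation of $\mathcal{V}^\pm$ via $\bar{\mathcal J}$ (Lemma \ref{lem_extvalue} iii), on which Corollary \ref{cor_reg2} and Lemma \ref{lem_reg3} rest, comes from Corollary \ref{cor_AF} and hence uses Isaacs' condition \eqref{eq_IC1} at states that may lie outside $Z$, whereas Section 4 officially assumes only \eqref{eq_IC3} on $Z$. The paper itself already invokes Lemma \ref{lem_extvalue} under the same standing hypotheses, so you inherit no gap the paper does not have, but your write-up should state explicitly that the extension to $\mathbb{R}^{n+1}$-valued approximants uses \eqref{eq_IC1} (or \eqref{eq_IC3} on an enlarged box containing $\Phi_k(Z)$ and its reachable set).
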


\subsection{Dynamic Programming Principle}
In this subsection, we aim to state and prove the dynamic programming principles for the extended value functions $\mathcal V^\pm$. To this end, we first analyse the status space of interest 
	$$\mathcal O(\mu_0):=\{\Phi\in C(Z;Z):\Phi(\supp{\mu_0})\subset \mathbb{R}^n\times(-\infty,M_0)\}.$$ 
Then we prove the relative compactness of the family of composed maps 
	$$\Phi_{Z,h}:=\{Z^{\cdot,u,v}_h\circ \Phi:u\in\mathcal U,\ v\in\mathcal V\}$$ 
	given $\Phi\in \mathcal O(\mu_0)$ and $h>0$, and we show that $\Phi_{Z,h}\subset\mathcal{O}(\mu_0)$ for $h>0$ sufficiently small before stating the dynamic programming principles. 
\begin{mrmk}
	If $\Phi\in\mathcal{O}(\mu_0)$, then $\mu_0$-almost surely no signal revelation occurs immediately after the game begins when $\Phi(z)$ is chosen as the initial position of the dynamics \eqref{dym_sys2}. As Corollary \ref{cor_DPP1} below implies, $\Phi\in\mathcal{O}(\mu_0)$ guarantees that the information structure of game $\mathcal{G}(\Phi\sharp\mu_0)$ will remain stable over short time intervals, which in turn allows us to develop the dynamic programming principle for the extended values $\mathcal{V}^\pm(\Phi,\mu_0)$.
\end{mrmk}
	Let $\mathcal{U}(t)$ (resp. $\mathcal{V}(t)$) denote the set of Lebesgue measurable controls $u:[0,t]\to\U$ (resp. $v:[0,t]\to \V$), and let $\mathcal{U}(t)$ and $\mathcal{V}(t)$ be equipped respectively with the corresponding $L^1$ norm. As direct a consequence of Assumptions \ref{Assum2}, we obtain by standard estimation methods and Gr\"{o}nwall's inequality that for all $t>0$ and $z_0\in Z$, the map
	\begin{equation*}
		Z^{z_0,\cdot}_t:\mathcal{U}(t)\times \mathcal{V}(t)\to Z: (u,v)\mapsto Z^{z_0,u,v}_t
	\end{equation*}
	is Lipschitz continuous. Furthermore, for any $\Phi\in C(Z,Z)$, the map $(u,v)\to Z^{\cdot,u,v}_t\circ \Phi$ is also Lipschitz continuous from $\mathcal{U}(t)\times \mathcal{V}(t)$ to $C(Z,Z)$. 
\begin{mlem}\label{lem_DPP1}
	The set $\mathcal O(\mu_0)$ is an open subset of $C(Z,Z)$.
\end{mlem}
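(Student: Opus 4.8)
The plan is to exhibit, around an arbitrary $\Phi\in\mathcal{O}(\mu_0)$, an explicit open ball in $(C(Z,Z),\|\cdot\|_\infty)$ that is entirely contained in $\mathcal{O}(\mu_0)$. Write $p:\R^{n+1}\to\R$ for the projection onto the last ($y$-)coordinate, so that the defining membership condition reads $p(\Phi(z))<M_0$ for every $z\in\supp{\mu_0}$. The crucial structural observation is that $\supp{\mu_0}$ is a closed subset of the compact set $Z$, hence compact; therefore the continuous map $z\mapsto p(\Phi(z))$ attains its maximum over $\supp{\mu_0}$.

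First I would set
$$m:=\max_{z\in\supp{\mu_0}}p(\Phi(z)),\qquad \varepsilon:=M_0-m.$$
Because $\Phi\in\mathcal{O}(\mu_0)$ forces $p(\Phi(z))<M_0$ for every $z$ in the compact set $\supp{\mu_0}$, the attained maximum $m$ is itself strictly below $M_0$, so $\varepsilon>0$. This single uniform gap $\varepsilon$ is what decouples the membership condition from the boundary threshold $M_0$, and its strict positivity is exactly the place where compactness of $\supp{\mu_0}$ enters. Next I would check that the ball $\{\Psi\in C(Z,Z):\|\Psi-\Phi\|_\infty<\varepsilon\}$ lies inside $\mathcal{O}(\mu_0)$. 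For such a $\Psi$ and any $z\in\supp{\mu_0}$, using that $p$ is $1$-Lipschitz for the Euclidean norm $\|\cdot\|$, one has
\begin{align*}
p(\Psi(z))&\le p(\Phi(z))+\abs{p(\Psi(z))-p(\Phi(z))}\le m+\|\Psi(z)-\Phi(z)\|\\
&\le m+\|\Psi-\Phi\|_\infty< m+\varepsilon=M_0.
\end{align*}
Hence $\Psi(\supp{\mu_0})\subset\R^n\times(-\infty,M_0)$, i.e.\ $\Psi\in\mathcal{O}(\mu_0)$. Since $\Phi\in\mathcal{O}(\mu_0)$ was arbitrary, every point of $\mathcal{O}(\mu_0)$ is interior, so $\mathcal{O}(\mu_0)$ is open.

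I do not anticipate any serious obstacle: the statement is an elementary consequence of the continuity of $\Phi$ together with the compactness of $\supp{\mu_0}$. The only point requiring (minor) care is to extract one \emph{uniform} threshold $\varepsilon$ valid simultaneously for all $z\in\supp{\mu_0}$ — a merely pointwise gap $M_0-p(\Phi(z))$ would not yield a ball of positive radius — and this is precisely what the maximum over the compact support supplies. One should also keep in mind that the relevant topology on $C(Z,Z)$ is the uniform norm $\|\cdot\|_\infty$, consistent with its use in Corollary \ref{cor_reg2}.
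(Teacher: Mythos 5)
Your proof is correct and takes essentially the same route as the paper's: both use compactness of $\supp{\mu_0}$ and continuity of the last coordinate $\Phi^{(n+1)}=p\circ\Phi$ to extract a uniform gap below $M_0$, then verify that the corresponding $\|\cdot\|_\infty$-ball around $\Phi$ stays in $\mathcal O(\mu_0)$. The only cosmetic difference is that the paper picks a radius $\delta$ with $M+\delta<M_0$, whereas you take the full gap $\varepsilon=M_0-m$ and rely on the strict inequality $\|\Psi-\Phi\|_\infty<\varepsilon$; both are equally valid.
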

\begin{proof}
	Let us fix $\Phi_0\in\mathcal O(\mu_0)$. We denote by $\Phi_0^{(n+1)}(z)$ the $(n+1)$-th coordinate of $\Phi_0(z)$ for $z\in Z$. Since $\text{supp}\ \mu_0$ is compact and the map $\Phi^{(n+1)}_0\in C(Z)$, 	
	$$M=\sup_{z\in \text{supp}\ \mu_0}\Phi^{(n+1)}_0(z)<M_0.$$

	Choose $\delta>0$ sufficiently small such that $M+\delta<M_0$. It follows that for any $\Phi\in B(\Phi_0;\delta)$ and for all $z\in\text{supp}\ \mu_0$,
	\begin{align*}
		\Phi^{(n+1)}(z)\le \Phi^{(n+1)}_0(z)+|\Phi^{(n+1)}(z)-\Phi^{(n+1)}_0(z)|
		\le M+\|\Phi(z)-\Phi_0(z)\|\le M+\delta<M_0.
	\end{align*}

Thus $\forall z\in \supp{\mu_0},\ \Phi(z)\in \mathbb R^n\times(-\infty,M_0)$, and $\Phi\in\mathcal O(\mu_0)$. Therefore $B(\Phi_0,\delta)\subset\mathcal O(\mu_0)$ and $\mathcal O(\mu_0)$ is an open subset of $C(Z,Z)$. The proof is complete.
\end{proof}
\begin{mcor}\label{cor_DPP1}
	Given $\Phi\in\mathcal O(\mu_0)$ and $h>0$ sufficiently small, $\overline{\Phi_{Z,h}}\subset \mathcal O(\mu_0)$.
\end{mcor}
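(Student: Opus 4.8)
The plan is to exploit the uniform displacement estimate \eqref{eq_assum_reg2}, together with the compactness of $\supp{\mu_0}$, to produce a \emph{uniform} strict gap below $M_0$ that survives passage to the closure. Since $\Phi\in\mathcal O(\mu_0)$ and $\supp{\mu_0}$ is compact, the continuous function $\Phi^{(n+1)}$ (the $(n+1)$-th, i.e. the $y$-, coordinate of $\Phi$, which is the coordinate relevant to the target set $\R^n\times[M_0,\infty)$) attains its maximum on $\supp{\mu_0}$, so that
$$M:=\sup_{z\in\supp{\mu_0}}\Phi^{(n+1)}(z)<M_0.$$

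First I would bound each element of $\Phi_{Z,h}$ along the last coordinate. For any $(u,v)\in\mathcal U\times\mathcal V$ and any $z_0\in\supp{\mu_0}$, estimate \eqref{eq_assum_reg2} with $s=0$ and $t=h$ gives $\|Z^{\Phi(z_0),u,v}_h-\Phi(z_0)\|\le \|F\|_\infty h$, whence in particular
$$\big(Z^{\Phi(z_0),u,v}_h\big)^{(n+1)}\le \Phi^{(n+1)}(z_0)+\|F\|_\infty h\le M+\|F\|_\infty h.$$
Choosing $h>0$ small enough that $\|F\|_\infty h<M_0-M$, every map $\Psi=Z^{\cdot,u,v}_h\circ\Phi\in\Phi_{Z,h}$ then satisfies the estimate $\Psi^{(n+1)}(z_0)\le M+\|F\|_\infty h$ for all $z_0\in\supp{\mu_0}$, with a right-hand side that is strictly below $M_0$ and, crucially, independent of $(u,v)$.

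Then the closure is handled by passing this bound to uniform limits. Let $\Psi\in\overline{\Phi_{Z,h}}$ and choose $\Psi_k=Z^{\cdot,u_k,v_k}_h\circ\Phi\in\Phi_{Z,h}$ with $\Psi_k\to\Psi$ in $C(Z,Z)$, hence pointwise. For each fixed $z_0\in\supp{\mu_0}$, letting $k\to\infty$ in $\Psi_k^{(n+1)}(z_0)\le M+\|F\|_\infty h$ yields $\Psi^{(n+1)}(z_0)\le M+\|F\|_\infty h<M_0$. Therefore $\Psi(\supp{\mu_0})\subset\R^n\times(-\infty,M_0)$, so $\Psi\in\mathcal O(\mu_0)$, and consequently $\overline{\Phi_{Z,h}}\subset\mathcal O(\mu_0)$.

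The argument is essentially soft, and the only point requiring care is precisely why the containment survives taking the closure: a merely pointwise strict inequality $\Psi^{(n+1)}(z_0)<M_0$ is not preserved under limits, so one must instead secure the \emph{uniform} bound $M+\|F\|_\infty h<M_0$ with a strictly positive gap, which is exactly what the compactness of $\supp{\mu_0}$ and the $(u,v)$-independent estimate \eqref{eq_assum_reg2} supply. I note that the relative compactness of $\Phi_{Z,h}$ announced before the corollary (an Arzel\`a--Ascoli argument using the equi-Lipschitz continuity of $z\mapsto Z^{\cdot,u,v}_h\circ\Phi$ inherited from \eqref{eq_assum_reg1} and the Lipschitz dependence on $(u,v)$) is not needed for the containment itself; it serves instead to ensure that $\overline{\Phi_{Z,h}}$ is compact for the subsequent dynamic programming arguments.
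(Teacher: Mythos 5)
Your proof is correct and follows essentially the same route as the paper's: both extract a uniform strictly positive gap below $M_0$ from the compactness of $\supp{\mu_0}$ (you via $M:=\sup_{z\in\supp{\mu_0}}\Phi^{(n+1)}(z)<M_0$, the paper via $\Phi\in\mathcal O_\delta(\mu_0)$ for some $\delta>0$), then apply the $(u,v)$-uniform displacement bound $\big\|Z^{\cdot,u,v}_h\circ\Phi-\Phi\big\|_\infty\le h\|F\|_\infty$ and note that the resulting uniform non-strict bound strictly below $M_0$ survives passage to uniform limits, so the closure stays in $\mathcal O(\mu_0)$. Your closing observation that the relative compactness of $\Phi_{Z,h}$ (Lemma \ref{lem_DPP2}) is not needed for the containment itself, only for the compactness of $\overline{\Phi_{Z,h}}$ used later, also matches the paper's organization.
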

\begin{proof}
	Let us denote by 
	$$\mathcal O_\delta(\mu_0):=\{\Psi\in C(Z,Z):\ \Psi(\supp{\mu_0})\subset\mathbb R^n\times(-\infty,M_0-\delta)\}\subset\mathcal O(\mu_0).$$ 		Since $\Phi\in\mathcal O(\mu_0)$ and $\Phi(\supp{\mu_0})$ is compact, there exists $\delta>0$ such that $\Phi\in\mathcal O_\delta(\mu_0)$. For any $(u,v)\in\mathcal U\times\mathcal V$ and for any $z\in \supp{\mu_0}$, 
	$$\Big|\Big(Z^{\cdot,u,v}_h\circ \Phi\Big)^{(n+1)}(z)-\Phi^{(n+1)}(z)\Big|\le\Big\|Z^{\cdot,u,v}_h\circ \Phi(z)- \Phi(z)\Big\|\le h\|F\|_\infty.$$
	Therefore, it suffices to choose 
	$$0<h<\frac{\delta}{2(\|F\|_\infty)},$$ 
	and one has $Z^{\cdot,u,v}_h\circ \Phi\in\mathcal O_{\delta/2}(\mu_0)$ and we deduce that $\Phi_{Z,h}\subset \mathcal O_{\delta/2}(\mu_0)\subset \overline{\mathcal O_{\delta/2}(\mu_0)}\subset\mathcal O(\mu_0)$ for $h>0$ small enough. The proof is complete.
\end{proof}
\begin{mlem}\label{lem_DPP2}
	For all $\Phi\in C(Z,Z)$ and $h>0$, the family $\Phi_{Z,h}$ is relatively compact in $C(Z,Z)$. In particular, for $\Phi\in\mathcal O(\mu_0)$ and $h>0 $ small enough, $\overline{\Phi_{Z,h}}\subset\mathcal{O}(\mu_0)$ is compact.
\end{mlem}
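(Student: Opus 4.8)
The plan is to apply the Arzel\`a--Ascoli theorem to the family $\Phi_{Z,h}=\{Z^{\cdot,u,v}_h\circ\Phi:(u,v)\in\mathcal U\times\mathcal V\}$, regarded as a subset of $C(Z,Z)$ equipped with the uniform norm. Since $Z=X\times[0,M_0+\eta]$ is compact and invariant for the dynamics (Assumptions \ref{Assum2} i)), every map in $\Phi_{Z,h}$ sends the compact set $Z$ into the compact set $Z$; hence pointwise relative compactness of the family is automatic, and the only substantive hypothesis of Arzel\`a--Ascoli that must be checked is equicontinuity.

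The equicontinuity will be obtained from the control-uniform Lipschitz dependence of the flow on its initial datum. For any $z_1,z_2\in Z$ and any $(u,v)\in\mathcal U\times\mathcal V$, estimate \eqref{eq_assum_reg1} yields
$$\big\|Z^{\Phi(z_1),u,v}_h-Z^{\Phi(z_2),u,v}_h\big\|\le e^{L_F h}\big\|\Phi(z_1)-\Phi(z_2)\big\|,$$
and the decisive feature is that the constant $e^{L_F h}$ is independent of $(u,v)$. Because $\Phi\in C(Z,Z)$ and $Z$ is compact, $\Phi$ is uniformly continuous, so given $\varepsilon>0$ one can select $\delta>0$ with $\|\Phi(z_1)-\Phi(z_2)\|<e^{-L_F h}\varepsilon$ whenever $\|z_1-z_2\|<\delta$. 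Substituting into the displayed bound gives $\|Z^{\Phi(z_1),u,v}_h-Z^{\Phi(z_2),u,v}_h\|<\varepsilon$ uniformly over all admissible $(u,v)$, which is precisely the equicontinuity of $\Phi_{Z,h}$. Although brief, this is the core of the argument: everything rests on the Lipschitz constant in \eqref{eq_assum_reg1} being uniform in the controls, which is itself inherited from the uniform Lipschitz continuity of $F$ in the state variable (Assumptions \ref{Assum1}). I do not anticipate any genuine difficulty here, and in particular no regularity of $\Phi$ beyond continuity is needed for relative compactness.

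It then follows from Arzel\`a--Ascoli that $\Phi_{Z,h}$ is relatively compact in $C(Z,Z)$, i.e. $\overline{\Phi_{Z,h}}$ is compact. For the final assertion I invoke Corollary \ref{cor_DPP1}, which guarantees that for $\Phi\in\mathcal O(\mu_0)$ and $h>0$ small enough one has $\overline{\Phi_{Z,h}}\subset\mathcal O(\mu_0)$. Combining this inclusion with the compactness of $\overline{\Phi_{Z,h}}$ already established shows that $\overline{\Phi_{Z,h}}$ is a compact subset of $\mathcal O(\mu_0)$, which completes the proof.
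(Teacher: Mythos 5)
Your proof is correct and follows essentially the same route as the paper: Arzel\`a--Ascoli, with equicontinuity derived from the control-uniform Lipschitz bound $e^{L_F h}\|\Phi(z_1)-\Phi(z_2)\|$ of \eqref{eq_assum_reg1} together with the uniform continuity of $\Phi$, and the final inclusion obtained from Corollary \ref{cor_DPP1}. The only (immaterial) variation is in the pointwise relative compactness step, where you invoke the invariance of the compact set $Z$ from Assumptions \ref{Assum2}, while the paper simply notes that the point set lies in the bounded ball $B(\Phi(z);h\|F\|_\infty)$; both are trivially sufficient.
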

\begin{proof}
	In view of Corollary \ref{cor_DPP1}, it suffices to prove the first claim. Recall that by the Arzel\`{a}-Ascoli theorem, we only need to check that the family $\Phi_{Z,h}$ is uniformly equi-continuous and point-wise relatively compact. 
	
	Fix $\Phi\in\mathcal{O}(\mu_0)$ and $h>0$. For all $z,z'\in Z$ and $(u,v)\in\mathcal U\times\mathcal V$, one has
	$$\big\|Z^{\cdot,u,v}_h\circ \Phi(z)-Z^{\cdot,u,v}_h\circ \Phi(z')\big\|\le e^{L_Fh}\big\|\Phi(z)-\Phi(z')\big\|.$$
	Since $\Phi$ is uniformly continuous, for any $\varepsilon>0$, there exists $\delta>0$ such that 
	$$\|z-z'\|\le \delta\Longrightarrow\big\|\Phi(z)-\Phi(z')\big\|\le \varepsilon e^{-L_Fh}.$$
	Therefore, for any $z,z'\in Z$ and $(u,v)\in\mathcal U\times\mathcal V$,
	$$\|z-z'\|\le \delta\Longrightarrow \big\|Z^{\cdot,u,v}_h\circ \Phi(z)-Z^{\cdot,u,v}_h\circ \Phi(z')\big\|\le e^{L_Fh}\big\|\Phi(z)-\Phi(z')\big\|\le \varepsilon.$$
	Consequently, the family $\Phi_{Z,h}$ is uniformly equi-continuous. On the other hand, we note that for any $z\in Z$, the set
	$$\{Z^{\cdot,u,v}_h\circ \Phi(z):\ (u,v)\in\mathcal U\times\mathcal V\}\subset B(\Phi(z);h(\|F\|_\infty))$$
is bounded and thus relatively compact. Hence the family $\Phi_{Z,h}$ is point-wise relatively compact. The proof is complete.
\end{proof}
Now we are ready to state and prove the dynamic programming principles for the extended value functions.
\begin{mprps}\label{prps_DPP}
	Under Assumptions \ref{Assum2}, for any $\mu_0\in\Delta(Z)$, for all $\Phi\in\mathcal O(\mu_0)$ and $h>0$ sufficiently small such that $\Phi_{Z,h}\subset\mathcal O(\mu_0)$, one has
	\begin{equation}\label{eq_dpp}
		\begin{aligned}
			\mathcal V^+	(\Phi,\mu_0)=\inf_{\alpha\in\mathcal A_d}\sup_{v\in\mathcal V}\Big\{\int_{Z}d\mu_0(z)\Big[&\int^{h}_0e^{-\lambda t} \ell(Z^{\Phi(z),\alpha(v),v}_t,\alpha(v)(t),v(t))dt
\Big]\\
			&+e^{-\lambda h}\mathcal V^+(Z^{\cdot,\alpha(v),v}_h\circ\Phi,\mu_0)\Big]\Big\}.
		\end{aligned}
	\end{equation}
	and
	\begin{equation}
		\begin{aligned}
			\mathcal V^-	(\Phi,\mu_0)=\sup_{\beta\in\mathcal B_d}\inf_{u\in\mathcal U}\Big\{\int_{Z}d\mu_0(z)\Big[&\int^{h}_0e^{-\lambda t} \ell(Z^{\Phi(z),u,\beta(u)}_t,u(t),\beta(u)(t))dt
\Big]\\
			&+e^{-\lambda h}\mathcal V^+(Z^{\cdot,u,\beta(u)}_h\circ\Phi,\mu_0)\Big]\Big\}.
		\end{aligned}
	\end{equation}
\end{mprps}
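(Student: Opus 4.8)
The plan is to prove \eqref{eq_dpp} by two opposite inequalities, working throughout with the complete-information reformation $\mathcal V^+(\Phi,\mu_0)=\inf_{\alpha\in\mathcal A_d}\sup_{v\in\mathcal V}\int_Z\bar J(\Phi(z),\alpha(v),v)\,d\mu_0(z)$ of Lemma~\ref{lem_extvalue} iii). The key structural fact is that no signal is revealed on $[0,h]$: since $\Phi_{Z,h}\subset\mathcal O(\mu_0)$ by Corollary~\ref{cor_DPP1} and the last coordinate of each trajectory increases while it stays below $M_0$ (Assumptions~\ref{Assum1} iii)), one has $\mathcal T(\Phi(z),u,v)>h$ for $\mu_0$-a.e.\ $z$ and all $(u,v)$. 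Writing $w^h(\cdot):=w(\cdot+h)$, $\Psi^{\alpha,v}:=Z^{\cdot,\alpha(v),v}_h\circ\Phi$, and $R_h(\alpha,v):=\int_Z d\mu_0(z)\int_0^h e^{-\lambda t}\ell(Z^{\Phi(z),\alpha(v),v}_t,\alpha(v)(t),v(t))\,dt$, the flow property and the definition of $\bar J$ then give the splitting identity
\[
\int_Z\bar J(\Phi(z),\alpha(v),v)\,d\mu_0(z)=R_h(\alpha,v)+e^{-\lambda h}\int_Z\bar J\big(\Psi^{\alpha,v}(z),(\alpha(v))^h,v^h\big)\,d\mu_0(z).
\]
I abbreviate by $W$ the right-hand side of \eqref{eq_dpp}, and note that $R_h(\alpha,v)$ and $\Psi^{\alpha,v}$ depend on $v$ only through $v|_{[0,h]}$.

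To prove $\mathcal V^+(\Phi,\mu_0)\ge W$, fix $\varepsilon>0$ and an $\varepsilon$-optimal $\alpha^*\in\mathcal A_d$ for the reformed value. Given $\bar v\in\mathcal V$, freeze the first $h$ units of the control and set $\alpha'(w):=\big(\alpha^*(\bar v\oplus_h w)\big)^h$, where $\bar v\oplus_h w$ equals $\bar v$ on $[0,h)$ and $w(\cdot-h)$ on $[h,\infty)$; the delay of $\alpha^*$ makes $\alpha'\in\mathcal A_d$, while $R_h(\alpha^*,\bar v\oplus_h w)$ and $\Psi^{\alpha^*,\bar v\oplus_h w}=\Psi^{\alpha^*,\bar v}$ are independent of $w$. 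Letting Player~2 play $v=\bar v\oplus_h w$ and applying the splitting identity, the continuation term equals $\int_Z\bar J(\Psi^{\alpha^*,\bar v}(z),\alpha'(w),w)\,d\mu_0(z)$, whose supremum over $w$ is at least $\inf_{\alpha''}\sup_{w}\int_Z\bar J(\Psi^{\alpha^*,\bar v}(z),\alpha''(w),w)\,d\mu_0=\mathcal V^+(\Psi^{\alpha^*,\bar v},\mu_0)$ (reformation again). Hence $\sup_v\int_Z\bar J(\Phi(z),\alpha^*(v),v)\,d\mu_0\ge R_h(\alpha^*,\bar v)+e^{-\lambda h}\mathcal V^+(\Psi^{\alpha^*,\bar v},\mu_0)$ for every $\bar v$; taking $\sup_{\bar v}$ bounds the left side below by $W$ (by definition of the infimum over $\alpha$), while $\varepsilon$-optimality of $\alpha^*$ bounds it above by $\mathcal V^+(\Phi,\mu_0)+\varepsilon$. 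Letting $\varepsilon\to0$ gives $W\le\mathcal V^+(\Phi,\mu_0)$.

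For the converse $\mathcal V^+(\Phi,\mu_0)\le W$, fix $\varepsilon>0$ and an $\varepsilon$-optimal $\alpha_0$ for $W$, and concatenate it with near-optimal continuations. Every continuation configuration $\Psi^{\alpha_0,v}$ lies in the compact set $\overline{\Phi_{Z,h}}\subset\mathcal O(\mu_0)$ (Lemma~\ref{lem_DPP2}), on which $\Psi\mapsto\mathcal V^+(\Psi,\mu_0)$ is Lipschitz (Corollary~\ref{cor_reg2}) and $\bar J(\Psi(\cdot),\cdot,\cdot)$ is Lipschitz in $\Psi$ uniformly in the controls (Corollary~\ref{cor_reg1}); therefore a strategy $\varepsilon$-optimal for $\mathcal V^+(\Psi,\mu_0)$ remains $2\varepsilon$-optimal on a ball $B(\Psi;\rho)$. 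Extracting a finite subcover and a subordinate Borel partition $\{E_k\}_{1\le k\le N}$ with strategies $\alpha'_1,\dots,\alpha'_N$ exactly as in Step~1 of Proposition~\ref{prps_AF}, I define $\hat\alpha\in\mathcal A_d$ to coincide with $\alpha_0$ on $[0,h]$ and, for $t\ge h$, to play $\alpha'_{k(v)}(v^h)(t-h)$, where $k(v)$ is the index with $\Psi^{\alpha_0,v}\in E_{k(v)}$. The splitting identity applied to $\hat\alpha$ and the uniform $2\varepsilon$-optimality on $E_{k(v)}$ give $\int_Z\bar J(\Phi(z),\hat\alpha(v),v)\,d\mu_0\le R_h(\alpha_0,v)+e^{-\lambda h}\mathcal V^+(\Psi^{\alpha_0,v},\mu_0)+2\varepsilon$ for every $v$; taking $\sup_v$ and invoking $\varepsilon$-optimality of $\alpha_0$ yields $\mathcal V^+(\Phi,\mu_0)\le W+3\varepsilon$, and $\varepsilon\to0$ finishes.

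The main obstacle is the admissibility of $\hat\alpha$ in the second step: since $k(v)$ depends on the whole history $v|_{[0,h]}$, switching exactly at $t=h$ destroys the delay property for controls that agree only on $[0,s]$ with $h-\tau'\le s<h$. As in Proposition~\ref{prps_AF}, this is remedied by deciding the switch from the control history slightly before $h$ (equivalently, on a $\tau'$-time grid), which perturbs $\Psi^{\alpha_0,v}$ by $O(\tau')$ in $C(Z,Z)$; the uniform continuity of Corollaries~\ref{cor_reg1}--\ref{cor_reg2} keeps the resulting error $O(\varepsilon)$, to be absorbed in the limit. The identity for $\mathcal V^-$ follows symmetrically, exchanging the roles of the two players and of $\inf$ and $\sup$.
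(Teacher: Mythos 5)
Your proposal is correct and follows essentially the same route as the paper's proof: both directions use the reformulation of $\mathcal V^+$ via $\bar J$ (Lemma \ref{lem_extvalue} iii)), the splitting of $\bar J$ at time $h$ justified by $\mathcal T>h$ on $\supp\mu_0$ (from $\Phi_{Z,h}\subset\mathcal O(\mu_0)$), concatenated strategies $\alpha'(w)=(\alpha^*(\bar v\oplus_h w))^h$ for the lower bound, and for the upper bound the compactness of $\overline{\Phi_{Z,h}}$ (Lemma \ref{lem_DPP2}), a finite cover with locally $\varepsilon$-optimal strategies, a subordinate Borel partition, and a patched strategy whose switching respects the delay. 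The only cosmetic difference is the placement of the $\tau$-shift that preserves the NAD property -- you decide the switch from the history on $[0,h-\tau']$ and switch at $h$, while the paper decides from the history on $[0,h]$ and switches at $h+\tau$ -- both incur the same $O(\tau)$ errors absorbed via Corollaries \ref{cor_reg1}--\ref{cor_reg2}.
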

\begin{proof}
	We only prove \eqref{eq_dpp} since the other equation can be established symmetrically. Let us fix $\mu_0\in\Delta(Z)$ and $\Phi\in\mathcal{O}(\mu_0)$. Consider $\mathcal{V}^+$ in its alternative form in Lemma \ref{lem_extvalue}:
	\begin{align*}
		\mathcal V^+	(\Phi,\mu_0)=\inf_{\alpha\in\mathcal A_d}	\sup_{v\in\mathcal V}\bar{\mathcal J}(\Phi,\mu_0,\alpha(v),v)
		=\inf_{\alpha\in\mathcal A_d}	\sup_{v\in\mathcal V}\int_{Z}\bar J(\Phi(z),\alpha(v),v)d\mu_0(z).
	\end{align*}
	Let us denote the right-hand side of \eqref{eq_dpp} by $\mathcal W^+(\Phi,\mu_0,h)$.
	\paragraph*{Step 1: $\mathcal V^+(\Phi,\mu_0)\ge\mathcal W^+(\Phi,\mu_0,h)$.} Let $\alpha^*\in\mathcal A_d$ be an $\varepsilon$-optimal strategy for $\mathcal V^+(\Phi,\mu_0)$.
	\begin{equation}\label{eq_dpp1}
		\begin{aligned}
			\mathcal V^+	(\Phi,\mu_0)\ge\sup_{v\in\mathcal V}\int_{Z}d\mu_0(z)\int^{\mathcal T(\Phi(z)_,\alpha^*(v),v)}_0e^{-\lambda t} \ell(Z^{\Phi(z),\alpha^*(v),v}_t,\alpha^*(v)(t),v(t))dt\\
			+e^{-\lambda {\mathcal T}(\Phi(z),\alpha^*(v),v)} V^+\Big(Z^{\Phi(z),\alpha^*(v),v}_{\mathcal T(\Phi(z),\alpha^*(v),v)}\Big)-\varepsilon.
		\end{aligned}
	\end{equation}
It follows that
	\begin{equation}\label{eq_dpp2}
		\begin{aligned}
			\mathcal W^+	(\Phi,\mu_0,h)\le\sup_{v\in\mathcal V}\int_{Z}d\mu_0(z)\Big[\int^{h}_0e^{-\lambda t} \ell(Z^{\Phi(z),\alpha^*(v),v}_t,\alpha^*(v)(t),v(t))dt\Big]
\\
			+e^{-\lambda h}\mathcal V^+\big(Z^{\cdot,\alpha^*(v),v}_h\circ\Phi,\mu_0\big).
		\end{aligned}
	\end{equation}
	Let us fix an arbitrary admissible control $v_0\in\mathcal V$ , and let us construct a new NAD strategy $\bar \alpha^*\in\mathcal A_d$ from $\alpha^*$ and $v_0$ by setting for all $v\in\mathcal V$, $\bar\alpha^*(v)=\alpha^*(\bar v)(\cdot+h)$ with
	$$\bar v(t)=\left\{\begin{aligned}
			&v_0(t),&t\in[0,h],\\
			&v(t-h), &t>h.
	\end{aligned}\right.$$
Therefore, by choosing an $\varepsilon$-optimal control $v^*\in\mathcal V$ for $\mathcal V^+\Big(Z^{\cdot,\alpha^*(v_0),v_0}_h\circ\Phi,\mu_0\Big)$ against $\bar\alpha^*$,
	\begin{equation}\label{eq_dpp3}
		\begin{aligned}
			\mathcal V^+	(\Phi,\mu_0)\ge
	\int_{Z}d\mu_0(z)\int^{\mathcal T(\Phi(z)_,\alpha^*(\bar v^*),\bar v^*)}_0e^{-\lambda t} \ell(Z^{\Phi(z),\alpha^*(\bar v^*),\bar v^*}_t,\alpha^*(\bar v^*)(t),\bar v^*(t))dt\\
			+e^{-\lambda {\mathcal T}(\Phi(z),\alpha^*(\bar v^*),\bar v^*)}V^+\Big(Z^{\Phi(z),\alpha^*(\bar v^*),\bar v^*}_{\mathcal T(\Phi(z),\alpha^*(\bar v^*),\bar v^*)}\Big)-\varepsilon
		\end{aligned}
	\end{equation}
	For any $z\in Z$, we have with $Z_h^\Phi:=Z^{\Phi(z),\alpha^*(v_0),v_0}_h$,
	\begin{equation}\label{eq_dpp4}
		\begin{aligned}
			&\int^{\mathcal T(\Phi(z)_,\alpha^*,\bar v^*)}_0e^{-\lambda t} \ell(Z^{\Phi(z),\alpha^*,\bar v^*}_t,\alpha^*(\bar v^*)(t),\bar v^*(t))dt\\
			=&\int^{h}_0e^{-\lambda t} \ell(Z^{\Phi(z),\alpha^*,\bar v^*}_t,\alpha^*(\bar v^*),\bar v^*)dt
			+\int^{\mathcal T(\Phi(z)_,\alpha^*,\bar v^*)}_he^{-\lambda t} \ell(Z^{\Phi(z),\alpha^*,\bar v^*}_t,\alpha^*(\bar v^*),\bar v^*)dt\\
			=&\int^{h}_0e^{-\lambda t} \ell(Z^{\Phi(z),\alpha^*,v_0}_t,\alpha^*(v_0),v_0)dt
			+e^{-\lambda h}\int^{\mathcal T(Z^\Phi_h,\bar \alpha^*,v^*)}_0 e^{-\lambda t} \ell(Z^{Z^\Phi_h,\bar \alpha^*,v^*}_t,\bar \alpha^*(v^*),v^*)dt.
		\end{aligned}
	\end{equation}
	Similarly,
	\begin{equation}\label{eq_dpp5}
		e^{-\lambda {\mathcal T}(\Phi(z),\alpha^*,\bar v^*)}V^+\Big(Z^{\Phi(z),\alpha^*,\bar v^*}_{\mathcal T(\Phi(z),\alpha^*,\bar v^*)}\Big)=e^{-\lambda h}\cdot e^{-\lambda {\mathcal T}\big(Z^\Phi_h,\bar \alpha^*,v^*\big)}V^+\Big(Z^{Z^\Phi_h,\bar \alpha^*,v^*}_{\mathcal T\big(Z^\Phi_h,\bar \alpha^*,v^*\big)}\Big)
	\end{equation}
	In combining \eqref{eq_dpp3}-\eqref{eq_dpp5}, we deduce obtain
	\begin{equation}\label{eq_dpp6}
		\begin{aligned}
			&\mathcal V^+(\Phi,\mu_0)\\
			\ge&\int_{Z}d\mu_0(z)\Big\{\int^{h}_0e^{-\lambda t} \ell(Z^{\Phi(z),\alpha^*,v_0}_t,\alpha^*(v_0),v_0)dt
			+e^{-\lambda h}\bar J\big(Z^{\Phi(z),\alpha^*,v_0}_h,\bar \alpha^*,v^*\big)\Big\}-\varepsilon\\
			\ge& \int_{Z}d\mu_0(z)\Big\{\int^{h}_0e^{-\lambda t} \ell(Z^{\Phi(z),\alpha^*,v_0}_t,\alpha^*(v_0),v_0)dt
			+e^{-\lambda h}\sup_{v\in\mathcal V}\bar J\big(Z^{\Phi(z),\alpha^*,v_0}_h,\bar\alpha^*,v\big)\Big\}-2\varepsilon\\
			\ge&\int_{Z}d\mu_0(z)\Big\{\int^{h}_0e^{-\lambda t} \ell(Z^{\Phi(z),\alpha^*,v_0}_t,\alpha^*(v_0),v_0)dt
			+e^{-\lambda h}\mathcal V^+(Z^{\cdot,\alpha^*,v_0}_h\circ\Phi,\mu_0)\Big\}-2\varepsilon.
		\end{aligned}
	\end{equation}
	Since $v_0\in\mathcal V$ in the last inequality above is arbitrary, one has
	\begin{equation}\label{eq_dpp7}
		\begin{aligned}
			&\mathcal V^+(\Phi,\mu_0)+2\varepsilon\\
			\ge &\sup_{v\in\mathcal V}\int_{Z}d\mu_0\int^{h}_0e^{-\lambda t} \ell(Z^{\Phi(z),\alpha^*(v),v}_t,\alpha^*(v),v)dt
	+e^{-\lambda h}\mathcal V^+(Z^{\cdot,\alpha^*(v),v}_h\circ\Phi,\mu_0)\\
			\ge &\mathcal W^+(\Phi,\mu_0,h).
		\end{aligned}
	\end{equation}
	Passing $\varepsilon\to0+$ on both sides of \eqref{eq_dpp7} and our first claim follows.
	\paragraph*{Step 2: $\mathcal V^+(\Phi,\mu_0)\le\mathcal W^+(\Phi,\mu_0,h)$.} We turn to the opposite inequality. Let $\tilde\alpha^*\in\mathcal A_d$ be an $\varepsilon$-optimal strategy for $\mathcal W^+(\Phi,\mu_0)$, namely,
	\begin{equation}\label{eq_dpp1B}
		\begin{aligned}
			\mathcal W^+	(\Phi,\mu_0)\ge\sup_{v\in\mathcal V}\int_{Z}d\mu_0(z)\Big[\int^{h}_0e^{-\lambda t} \ell(Z^{\Phi(z),\tilde\alpha^*(v),v}_t,\tilde\alpha^*(v),v)dt
\Big]\\
		+e^{-\lambda h}\mathcal V^+(Z^{\cdot,\tilde\alpha^*(v),v}_h\circ\Phi,\mu_0)-\varepsilon.
		\end{aligned}
	\end{equation}
	Let us fix an arbitrary $\varepsilon>0$, and let us choose, for all $\Psi\in\mathcal{O}(\mu_0)$, an $\varepsilon/2$-optimal strategy $\alpha_\Psi\in\mathcal A_d$ for $\mathcal V^+(\Psi,\mu_0)$. Hence, for all $\Psi\in\mathcal{O}(\mu_0)$,
	\begin{equation*}
		\mathcal V^+(\Psi,\mu_0)\ge \sup_{v\in\mathcal V}\bar{\mathcal{J}}(\Psi,\mu_0,\alpha_\Psi(v),v)-\frac{\varepsilon}{2}.
	\end{equation*}
	Since the map $\Psi\mapsto \mathcal V^+(\Psi,\mu_0)$ is continuous (Corollary \ref{cor_reg2}), for all $\Psi\in\mathcal{O}(\mu_0)$, there exists $\delta_\Psi>0$ such that $\alpha_\Psi$ is still $\varepsilon$-optimal for $\mathcal V^+(\Psi',\mu_0)$ for all $\Psi'\in B(\Psi;\delta_\Psi)\subset \mathcal O$. In addition, the collection $\{B(\Psi;\delta_\Psi/2)\}_{\Psi\in\overline{\Phi_{Z,h}}}$ forms an open cover of $\overline{\Phi_{Z,h}}$.
	
	By Lemma \ref{lem_DPP2}, the set $\Phi_{Z,h}:=\{Z^{\cdot,u,v}_h\circ\Phi\ |\ u\in\mathcal U,\ v\in\mathcal V\}$ is relatively compact. Therefore the above open cover has a finite open cover
	$$\bigcup_{k=1}^N B(\Psi_k;\delta_{\Psi_k}/2)\supset \overline{\Phi_{Z,h}}.$$
	For simplicity, we denote $B_k:=B(\Psi_k;\delta_{\Psi_k}/2)$ and $\alpha_k:=\alpha_{\Psi_k}$. Let us construct a Borel partition of $\mathcal O(\mu_0)$ by setting
	\begin{align*}
		E_0=\emptyset,\ E_{N+1}=\mathcal O(\mu_0)\backslash \bigcup_{j=1}^{N}B_j,\ 
		E_k=B_k\backslash \bigcup_{j=1}^{k-1}E_j,\ 1\le k\le N.
	\end{align*}
	Let $\tau>0$ be a common delay of $\tilde\alpha^*$ and $\{\alpha_k\}_{1\le k\le N}$. Without loss of generality, we can choose $\tau$ sufficiently small such that for all $(u,v)\in\mathcal U\times\mathcal V$,
	$$\|Z^{\cdot,u,v}_\tau-Id\|_\infty\le \delta:=\min_{1\le k\le N}\frac{\delta_{\Psi_k}}{2}.$$
	It follows that, for any $(u,v)\in\mathcal U\times\mathcal V$, if $\Psi\in E_k$ for some $1\le k\le N$, then $\alpha_k$ is an $\varepsilon$-optimal strategy for $\mathcal V^+(Z^{\cdot,u,v}_\tau\circ \Psi,\mu_0)$, i.e.
	\begin{equation}\label{eq_dpp8}
		\forall \Psi\in E_k,\ \forall (u,v)\in\mathcal U\times\mathcal V,\ \mathcal V^+(Z^{\cdot,u,v}_\tau\circ \Psi,\mu_0)\ge \sup_{v'\in\mathcal V}\bar{\mathcal J}(Z^{\cdot,u,v}_t\circ \Psi,\alpha_k(v'),v')-\varepsilon. 
	\end{equation}
	We proceed to construct a new strategy $\hat{\alpha}^*\in\mathcal A_d$ as follows. For all $v\in\mathcal V$ and $t\ge 0$, we write $v_{h,\tau}:=v|_{[h+\tau,\infty)}(\cdot+h+\tau)$ and define 
	\begin{equation}
		\hat\alpha^*(v)(t)=\left\{\begin{aligned}
			&\tilde\alpha^*(v)(t),&\txt{ if }t\in[0,h+\tau),\\
			&\tilde\alpha^*(v)(t),&\txt{ if }t\ge h+\tau\txt{ and }Z^{\cdot,\alpha^*(v),v}_h\circ\Phi\in E_{N+1},\\
			&\alpha_k(v_{h,\tau})(t-h-\tau),&\txt{ if }t\ge h+\tau\txt{ and }Z^{\cdot,\alpha^*(v),v}_h\circ\Phi\in E_k.&
		\end{aligned}\right.
	\end{equation}
	One can check that $\hat\alpha^*\in\mathcal A_d$, since the map $v\mapsto Z^{\cdot,\alpha^*(v),v}_h\circ\Phi$ from $\mathcal{V}$ to $C(Z,Z)$ is the composition of the measurable map $v\mapsto (\alpha^*(v),v)$ and the continuous map $(u,v)\mapsto Z^{\cdot,u,v}_h\circ\Phi$. We have furthermore,
	\begin{equation}\label{eq_dpp9}
		\begin{aligned}
			\mathcal V^+(\Phi,\mu)\le &\sup_{v\in \mathcal V}\bar{\mathcal J}(\Phi,\mu_0,\hat\alpha^*(v),v)\\
			=&\sup_{v\in \mathcal V}\int_{Z}d\mu_0(z)\Big\{\int^{\mathcal T(\Phi(z),\hat\alpha^*(v),v)}_0e^{-\lambda t} \ell(Z^{\Phi(z),\hat\alpha^*(v),v}_t,\hat\alpha^*(v)(t),v(t))dt\\
			&+e^{-\lambda {\mathcal T}(\Phi(z),\hat\alpha^*(v),v)}V^+\big(Z^{\Phi(z),\hat\alpha^*(v),v}_{\mathcal T(\Phi(z),\hat\alpha^*(v),v)}\big)\Big\}
		\end{aligned}
	\end{equation}
	For all $z\in Z$ and $v\in\mathcal V$, by definition of $\bar J$ and $\hat\alpha^*$, we observe that
	\begin{equation}\label{eq_dpp9A}
		\begin{aligned}
			&\int^{\mathcal T(\Phi(z),\hat\alpha^*,v)}_0e^{-\lambda t} \ell(Z^{\Phi(z),\hat\alpha^*,v}_t,\hat\alpha^*(v),v)dt\\
			=&\int^{h}_0e^{-\lambda t} \ell(Z^{\Phi(z),\tilde\alpha^*,v}_t,\tilde\alpha^*(v),v)dt+\int^{h+\tau}_h e^{-\lambda t} \ell(Z^{\Phi(z),\hat\alpha^*,v}_t,\hat\alpha^*(v),v)dt\\
			&+\int^{\mathcal T(\Phi(z),\hat\alpha^*,v)}_{h+\tau}e^{-\lambda t} \ell(Z^{\Phi(z),\hat\alpha^*,v}_t,\hat\alpha^*(v),v)dt\\
			\le &\int^{h}_0e^{-\lambda t} \ell(Z^{\Phi(z),\tilde\alpha^*,v}_t,\tilde\alpha^*(v),v)dt+\tau\|\ell\|_\infty
			+\int^{\mathcal T(\Phi(z),\hat\alpha^*,v)}_{h+\tau}e^{-\lambda t} \ell(Z^{\Phi(z),\hat\alpha^*,v}_t,\hat\alpha^*(v),v)dt,
		\end{aligned}
	\end{equation}
	and
	\begin{equation}\label{eq_dpp9B}
		\begin{aligned}
			&\int^{\mathcal T(\Phi(z),\hat\alpha^*,v)}_{h+\tau}e^{-\lambda t} \ell(Z^{\Phi(z),\hat\alpha^*,v}_t,\hat\alpha^*(v),v)dt+e^{-\lambda {\mathcal T}(\Phi(z),\hat\alpha^*,v)}V^+\big(Z^{\Phi(z),\hat\alpha^*,v}_{\mathcal T(\Phi(z),\hat\alpha^*,v)}\big)\\
			=&e^{-\lambda (h+\tau)}\sum^N_{k=1}\bar J(Z^{\cdot,\tilde\alpha^*(v),v}_{h+\tau}\circ\Phi(z),\alpha_k(v_{h,\tau}),v_{h,\tau})\textbf{1}_{E_k}(Z^{\cdot,\tilde\alpha^*(v),v}_{h}\circ \Phi).
		\end{aligned}
	\end{equation}
	Furthermore, we have
	\begin{equation}\label{eq_dpp9C}
		\begin{aligned}
			&\int_Zd\mu_0(z)\Big[e^{-\lambda (h+\tau)}\sum^N_{k=1}\bar J(Z^{\cdot,\tilde\alpha^*(v),v}_{h+\tau}\circ\Phi(z),\alpha_k(v_{h,\tau}),v_{h,\tau})\textbf{1}_{E_k}(Z^{\cdot,\tilde\alpha^*(v),v}_{h}\circ \Phi)\Big]\\
			= &e^{-\lambda (h+\tau)}\sum^N_{k=1}\textbf{1}_{E_k}(Z^{\cdot,\tilde\alpha^*(v),v}_{h}\circ \Phi)\int_Zd\mu_0(z)\Big[\bar J(Z^{\cdot,\tilde\alpha^*(v),v}_{h+\tau}\circ\Phi(z),\alpha_k(v_{h,\tau}),v_{h,\tau})\Big]\\
			\le &e^{-\lambda (h+\tau)}\sum^N_{k=1}\textbf{1}_{E_k}(Z^{\cdot,\tilde\alpha^*(v),v}_{h}\circ \Phi)\sup_{v'\in\mathcal V}\bar{\mathcal J}(Z^{\cdot,\tilde\alpha^*(v),v}_{h+\tau}\circ\Phi,\mu_0,\alpha_k(v'),v').
		\end{aligned}
	\end{equation}
	Substituting \eqref{eq_dpp9A}-\eqref{eq_dpp9C} into the right-hand side of \eqref{eq_dpp9} yields furthermore
	\begin{equation}\label{eq_dpp10}
		\begin{aligned}
			&\mathcal V^+(\Phi,\mu)\\
			\le &\sup_{v\in \mathcal V}\int_{Z}d\mu_0(z)\Big\{\int^{h}_0e^{-\lambda t} \ell(Z^{\Phi(z),\tilde\alpha^*(v),v}_t,\tilde\alpha^*(v)(t),v(t))dt\Big\}+\tau\|\ell\|_\infty\\
			&+e^{-\lambda (h+\tau)}\sum^N_{k=1}\textbf{1}_{E_k}(Z^{\cdot,\tilde\alpha^*(v),v}_{h}\circ \Phi)\int_{Z}d\mu_0\Big\{\bar J(Z^{\cdot,\tilde\alpha^*(v),v}_{h+\tau}\circ\Phi(z),\alpha_k(v_{h,\tau}),v_{h,\tau})\Big\}\\
			\le &\sup_{v\in \mathcal V}\int_{Z}d\mu_0(z)\Big\{\int^{h}_0e^{-\lambda t} \ell(Z^{\Phi(z),\tilde\alpha^*(v),v}_t,\tilde\alpha^*(v)(t),v(t))dt\Big\}+\tau\|\ell\|_\infty\\
			&+e^{-\lambda (h+\tau)}\sum^N_{k=1}\textbf{1}_{E_k}(Z^{\cdot,\tilde\alpha^*(v),v}_{h}\circ \Phi)\sup_{v'\in\mathcal V}\bar{\mathcal J}(Z^{\cdot,\tilde\alpha^*(v),v}_{h+\tau}\circ\Phi,\mu_0,\alpha_k(v'),v')
		\end{aligned}
	\end{equation}
	But $Z^{\cdot,\tilde\alpha^*(v),v}_{h+\tau}\circ\Phi=Z^{\cdot,\tilde\alpha^*(v)(\cdot+h),v(\cdot+h)}_{\tau}\circ\big(Z^{\cdot,\tilde\alpha^*(v),v}_{h}\circ\Phi\big)$, and it follows from \eqref{eq_dpp8} and Corollary \ref{cor_reg2} that, by choosing $\tau<\min\big\{\frac{\varepsilon}{\|\ell\|_\infty},\frac{\varepsilon}{C(1+\|F\|_\infty)}\big\}$ with $C>0$ given in Corollary \ref{cor_reg2}, we get
	\begin{equation}\label{eq_dpp11}
		\begin{aligned}
			&\sum^N_{k=1}\textbf{1}_{E_k}(Z^{\cdot,\tilde\alpha^*(v),v}_{h}\circ \Phi)\sup_{v'\in\mathcal V}\bar{\mathcal J}(Z^{\cdot,\tilde\alpha^*(v),v}_{h+\tau}\circ\Phi,\mu_0,\alpha_k(v'),v')\\
			\le& \sum^N_{k=1}\textbf{1}_{E_k}(Z^{\cdot,\tilde\alpha^*(v),v}_{h}\circ \Phi)\mathcal V^+(Z^{\cdot,\tilde\alpha^*(v),v}_{h+\tau}\circ\Phi,\mu_0)+\varepsilon\\
			\le& \mathcal V^+(Z^{\cdot,\tilde\alpha^*(v),v}_{h}\circ\Phi,\mu_0)+2\varepsilon
		\end{aligned}
	\end{equation}
	In combining \eqref{eq_dpp1B}, \eqref{eq_dpp10} and \eqref{eq_dpp11}, we obtain
	\begin{equation}\label{eq_dpp12}
		\begin{aligned}
			\mathcal V^+(\Phi,\mu)
			\le &\sup_{v\in \mathcal V}\int_{Z}d\mu_0(z)\Big\{\int^{h}_0e^{-\lambda t} \ell(Z^{\Phi(z),\tilde\alpha^*(v),v}_t,\tilde\alpha^*(v)(t),v(t))dt\Big\}+\tau\|\ell\|_\infty\\
			&+e^{-\lambda (h+\tau)}V^+(Z^{\cdot,\tilde\alpha^*(v),v}_{h}\circ\Phi,\mu_0)+2\varepsilon\\
			\le &\sup_{v\in \mathcal V}\int_{Z}d\mu_0(z)\Big\{\int^{h}_0e^{-\lambda t} \ell(Z^{\Phi(z),\tilde\alpha^*(v),v}_t,\tilde\alpha^*(v)(t),v(t))dt\Big\}\\
			&+e^{-\lambda h}\mathcal V^+(Z^{\cdot,\tilde\alpha^*(v),v}_{h}\circ\Phi,\mu_0)+3\varepsilon.\\
			\le &\mathcal W^+(\Phi,\mu_0)+4\varepsilon.
		\end{aligned}
	\end{equation}
	Since in the above inequality, $\varepsilon>0$ is arbitrary, letting $\varepsilon\to 0$ yields the desired result. The proof is complete.
\end{proof}

\section{Hamilton-Jacobi-Isaacs Equation and the Characterization of the Extended Value}
In this section, inspired by \cite{C&Q08,J&Q18}, we study the following Hamilton-Jacobi-Isaacs equation \eqref{HJB} and introduce the appropriate notion of its viscosity sub- and supersolutions on a given open subset of $C(Z,Z)$ (for notions and techniques regarding viscosity solutions in infinite-dimensional spaces, cf. \cite{CRANDALL1985379,CRANDALL1986368,CRANDALL1986214}). Our goal is to prove a comparison principle for \eqref{HJB} and, as a by-product, to obtain a characterization of $\mathcal{V}^\pm(\Phi,\mu_0)$ as the unique viscosity solution of the Hamilton-Jacobi-Isaacs equation on $\mathcal{O}(\mu_0)$ verifying given regularity conditions and a boundary condition.
\begin{equation}\label{HJB}
	-\lambda V(\Phi,\mu_0)+\mathcal H(\mu_0,\Phi,DV)=0,
\end{equation}
where the Hamiltonian $\mathcal H:\Delta(Z)\times C(Z,Z)\times C(Z,Z)\to\mathbb R$ is given by
\begin{align*}
	\mathcal H(\mu_0,\Phi,p_\Phi):=&\inf_{u_0\in\U}\sup_{v_0\in\V}\int_{Z}d\mu_0(z)\big[\ell\big(\Phi_0(z),u_0,v_0\big)+ F(\Phi_0(z),u_0,v_0),\cdot p_\Phi(z) \big]\\
	=&\sup_{v_0\in\V}\inf_{u_0\in\U}\int_{Z}d\mu_0(z)\big[\ell\big(\Phi_0(z),u_0,v_0\big)+ F(\Phi_0(z),u_0,v_0)\cdot p_\Phi(z)\big]
\end{align*}

\begin{mdef}
	Let $\delta>0$. $p_\Phi\in C(Z,Z)$ is said to belong to the $\delta$-superdifferential $D^+_\delta w(\Phi_0,\mu_0)$ to the function $w:C(Z,Z)\times\Delta(Z)\to\R$ at $(\Phi_0,\mu_0)\in C(Z,Z)\times \Delta(Z)$ iff
	$$\limsup_{\|\Psi\|_\infty\to 0}\frac{w(\Phi_0+\Psi,\mu_0)-w(\Phi_0,\mu_0)-\int_Z \Psi(z)\cdot p_\Phi(z) d\mu_0(z)}{\|\Psi\|_\infty}\le \delta,$$
	where the limit $\|\Psi\|_\infty\to 0$ should be understood in the sense of uniform convergence of $\Psi$ to $0$ in $C(Z,Z)$.
\end{mdef}

\begin{mdef}
	Let $\delta>0$. We say that $p_\Phi\in C(Z,Z)$ belongs to the $\delta$-subdifferential $D^-_\delta w(\Phi_0,\mu_0)$ to the function $w:C(Z,Z)\times\Delta(Z)\to\R$ at $(\Phi_0,\mu_0)\in C(Z,Z)\times \Delta(Z)$ iff
	$$\liminf_{\|\Psi\|_\infty\to 0}\frac{w(\Phi_0+\Psi,\mu_0)-w(\Phi_0,\mu_0)-\int_Z \Psi(z)\cdot p_\Phi(z) d\mu_0(z)}{\|\Psi\|_\infty}\ge -\delta.$$
\end{mdef}

\begin{mdef}
	For any $\mu_0\in\Delta(Z)$ and $O\subset C(Z,Z)$ an open subset, the function $w:C(Z,Z)\times\Delta(Z)\to\R$ is a viscosity subsolution to \eqref{HJB} on $O$ if and only if there exists $C>0$ such that, for all $\Phi_0\in O$ and for all $p_\Phi\in D^+_\delta w(\Phi_0,\mu_0)$, 
	$$-\lambda w+\mathcal{H}(\mu_0,\Phi_0,p_\Phi)\ge -C\delta.$$
\end{mdef}

\begin{mdef}
	For any $\mu_0\in\Delta(Z)$ and $O\subset C(Z,Z)$ an open subset, the function $w:C(Z,Z)\times\Delta(Z)\to\R$ is a viscosity supersolution to \eqref{HJB} on $O$ if and only if there exists $C>0$ such that, for all $\Phi_0\in O$ and for all $p_\Phi\in D^-_\delta w(\Phi_0,\mu_0)$, 
	$$-\lambda w+\mathcal{H}(\mu_0,\Phi_0,p_\Phi)\le C\delta.$$
\end{mdef}

\begin{mlem}\label{lem_subvis}
	For any $\mu_0\in\Delta(Z)$ and $O\subset C(Z,Z)$ an open subset, if $w$ is a viscosity subsolution to \eqref{HJB} on $O$, then $-w$ is a viscosity supersolution to the equation \eqref{HJB2}
	\begin{equation}\label{HJB2}
		-\lambda V(\Phi,\mu_0)+\tilde{\mathcal{H}}(\mu_0,\Phi,DV)=0,
	\end{equation}
	on $O$ with $\tilde{\mathcal{H}}(\mu_0,\Phi,p_\Phi):=-\mathcal{H}(\mu_0,\Phi,-p_\Phi)$.
\end{mlem}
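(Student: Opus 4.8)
The plan is to reduce the statement to two elementary facts: a duality between the $\delta$-super- and $\delta$-subdifferentials under the sign change $w\mapsto -w$, and the defining identity $\tilde{\mathcal H}(\mu_0,\Phi,p_\Phi)=-\mathcal H(\mu_0,\Phi,-p_\Phi)$ for the modified Hamiltonian. Once these are in place, the supersolution inequality for $-w$ follows from the subsolution inequality for $w$ by a single substitution, reusing the very same constant $C>0$ that appears in the definition of viscosity subsolution. No compactness, continuity, or dynamic-programming input is needed; the content is entirely the bookkeeping of signs.

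First I would prove the differential duality: for every $\delta>0$, every $(\Phi_0,\mu_0)\in O\times\Delta(Z)$, and every $q_\Phi\in C(Z,Z)$,
$$q_\Phi\in D^-_\delta(-w)(\Phi_0,\mu_0)\iff -q_\Phi\in D^+_\delta w(\Phi_0,\mu_0).$$
This is immediate from the two definitions. Denote by
$$Q(\Psi):=\frac{-w(\Phi_0+\Psi,\mu_0)+w(\Phi_0,\mu_0)-\int_Z \Psi(z)\cdot q_\Phi(z)\,d\mu_0(z)}{\|\Psi\|_\infty}$$
the difference quotient governing $D^-_\delta(-w)$ at $q_\Phi$, so that membership reads $\liminf_{\|\Psi\|_\infty\to 0}Q(\Psi)\ge-\delta$. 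Since $\|\Psi\|_\infty>0$, multiplying by $-1$ gives
$$-Q(\Psi)=\frac{w(\Phi_0+\Psi,\mu_0)-w(\Phi_0,\mu_0)-\int_Z \Psi(z)\cdot(-q_\Phi(z))\,d\mu_0(z)}{\|\Psi\|_\infty},$$
and the condition $\liminf Q\ge-\delta$ is equivalent to $\limsup(-Q)\le\delta$, which is exactly the condition $-q_\Phi\in D^+_\delta w(\Phi_0,\mu_0)$. This establishes the claimed correspondence.

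With this correspondence, I would take any $\Phi_0\in O$ and any $q_\Phi\in D^-_\delta(-w)(\Phi_0,\mu_0)$. By the duality, $p_\Phi:=-q_\Phi\in D^+_\delta w(\Phi_0,\mu_0)$, so the subsolution property of $w$ yields, with $C>0$ its subsolution constant, $-\lambda w(\Phi_0,\mu_0)+\mathcal H(\mu_0,\Phi_0,-q_\Phi)\ge -C\delta$. Substituting $\mathcal H(\mu_0,\Phi_0,-q_\Phi)=-\tilde{\mathcal H}(\mu_0,\Phi_0,q_\Phi)$ and multiplying through by $-1$ gives $\lambda w(\Phi_0,\mu_0)+\tilde{\mathcal H}(\mu_0,\Phi_0,q_\Phi)\le C\delta$, that is, $-\lambda(-w)(\Phi_0,\mu_0)+\tilde{\mathcal H}(\mu_0,\Phi_0,q_\Phi)\le C\delta$, which is precisely the supersolution inequality for $-w$ to \eqref{HJB2}. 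I expect no serious obstacle; the one point deserving care is verifying that the tolerance $\delta$ and the structural constant $C$ are preserved verbatim under the sign change, so that the single $C$ furnished by the subsolution hypothesis serves directly as the constant required in the supersolution definition, with no rescaling of $\delta$.
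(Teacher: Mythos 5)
Your proposal is correct and follows essentially the same route as the paper's own proof: you establish the sign-flip duality $q_\Phi\in D^-_\delta(-w)(\Phi_0,\mu_0)\iff -q_\Phi\in D^+_\delta w(\Phi_0,\mu_0)$ and then substitute into the subsolution inequality, keeping the same constant $C$; the paper does exactly this, merely writing out $\tilde{\mathcal{H}}$ explicitly via the $\sup\inf$/$\inf\sup$ form of $\mathcal{H}$ where you invoke the defining identity $\tilde{\mathcal{H}}(\mu_0,\Phi,p_\Phi)=-\mathcal{H}(\mu_0,\Phi,-p_\Phi)$ abstractly. No gap.
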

\begin{proof}
	It suffices to observe that for any $\Phi_0\in O$, if $p_\Phi\in D^-_\delta (-w)(\Phi_0,\mu_0)$, then
	$$\limsup_{\|\Psi\|_\infty\to 0}\frac{w(\Phi_0+\Psi,\mu_0)-w(\Phi_0,\mu_0)-\int_Z \Psi(z)\cdot(-p_\Phi(z)) d\mu_0(z)}{\|\Psi\|_\infty}\le \delta,$$
	and consequently $-p_\Phi\in D^+_\delta w(\Phi_0,\mu_0)$. Moreover, since $w$ is a viscosity subsolution to \eqref{HJB} on $O$, it follows that there exists $C>0$ independent of the choice of $\Phi_0\in O$ and $-p_\Phi\in D^+_\delta w(\Phi_0,\mu_0)$ such that
	\begin{align*}
		-C\delta \le& -\lambda w+\mathcal{H}(\mu_0,\Phi_0,-p_\Phi)\\
		=& \lambda (-w)+\sup_{v_0\in\V}\inf_{u_0\in\U}\int_{Z}d\mu_0(z)\big[\ell\big(\Phi_0(z),u_0,v_0\big)+ F(\Phi_0(z),u_0,v_0)\cdot(-p_\Phi(z))\big]\\
		=& \lambda (-w)-\inf_{u_0\in\U}\sup_{v_0\in\V}\int_{Z}d\mu_0(z)\big[-\ell\big(\Phi_0(z),u_0,v_0\big)+ F(\Phi_0(z),u_0,v_0)\cdot p_\Phi(z)\big].
	\end{align*}		
	Therefore, we have
	$$-\lambda(-w)+\tilde{\mathcal{H}}(\mu_0,\Phi_0,p_\Phi)\le C\delta.$$
	The proof is complete.
\end{proof}
	We show that $\mathcal{V}^+$ is a viscosity subsolution to \eqref{HJB} on the open set $\mathcal{O}(\mu_0)$ in the proposition below by dividing both sides of \eqref{eq_dpp} in the dynamic programming principle by $$\big\|Z^{\Phi_0(\cdot),\alpha(v),v}_h-\Phi_0\big\|_{\infty}$$ and then letting $h>0$ tends to $0$. Moreover, we deduce from Lemma \ref{lem_subvis}, $\mathcal{V}^-$ is a viscosity supersolution to \eqref{HJB} on $\mathcal{O}(\mu_0)$ (see Corollary \ref{cor_supvis}).
\begin{mprps}\label{prps_subvis}
	For any $\mu_0\in\Delta(Z)$, the extended upper value function $\mathcal{V}^+(\mu,\Phi)$ is a viscosity subsolution to \eqref{HJB} on
	$\mathcal O(\mu_0)$.
\end{mprps}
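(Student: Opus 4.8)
The plan is to derive the subsolution inequality directly from the dynamic programming principle \eqref{eq_dpp} by testing the infimum against constant strategies and then letting the time step $h\to 0^+$; the constant in the definition will turn out to be $C=\|F\|_\infty$. Fix $\mu_0\in\Delta(Z)$, $\Phi_0\in\mathcal O(\mu_0)$, $\delta>0$ and $p_\Phi\in D^+_\delta\mathcal V^+(\Phi_0,\mu_0)$. For $h>0$ small enough that $\Phi_{Z,h}\subset\mathcal O(\mu_0)$ (Corollary \ref{cor_DPP1}), the principle \eqref{eq_dpp} applies; since the constant strategy $\alpha(v)\equiv u_0$ (playing the constant action $u_0\in\U$, which is trivially an NAD strategy) is admissible, the infimum over $\alpha\in\mathcal A_d$ gives, for every $u_0\in\U$,
\begin{equation*}
\mathcal V^+(\Phi_0,\mu_0)\le\sup_{v\in\mathcal V}\Big\{\int_Z\Big[\int_0^h e^{-\lambda t}\ell(Z^{\Phi_0(z),u_0,v}_t,u_0,v(t))\,dt\Big]d\mu_0(z)+e^{-\lambda h}\mathcal V^+(Z^{\cdot,u_0,v}_h\circ\Phi_0,\mu_0)\Big\}.
\end{equation*}

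Next I would linearize the terminal term via the $\delta$-superdifferential. Writing $\Psi_{h,u_0,v}:=Z^{\cdot,u_0,v}_h\circ\Phi_0-\Phi_0$, estimate \eqref{eq_assum_reg2} yields $\|\Psi_{h,u_0,v}\|_\infty\le h\|F\|_\infty$ uniformly in $u_0$ and $v$, so by the definition of $D^+_\delta$, for every $\varepsilon>0$ there is $h_0>0$ with
\begin{equation*}
\mathcal V^+(Z^{\cdot,u_0,v}_h\circ\Phi_0,\mu_0)\le\mathcal V^+(\Phi_0,\mu_0)+\int_Z\big(Z^{\Phi_0(z),u_0,v}_h-\Phi_0(z)\big)\cdot p_\Phi(z)\,d\mu_0(z)+(\delta+\varepsilon)h\|F\|_\infty
\end{equation*}
for all $h<h_0$, all $u_0\in\U$ and all $v\in\mathcal V$. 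Substituting this, factoring the constant term $e^{-\lambda h}\mathcal V^+(\Phi_0,\mu_0)$ out of the supremum, and using $Z^{\Phi_0(z),u_0,v}_h-\Phi_0(z)=\int_0^h F(Z^{\Phi_0(z),u_0,v}_t,u_0,v(t))\,dt$, I get
\begin{equation*}
(1-e^{-\lambda h})\mathcal V^+(\Phi_0,\mu_0)\le e^{-\lambda h}(\delta+\varepsilon)h\|F\|_\infty+\sup_{v\in\mathcal V}\int_Z\int_0^h\big[e^{-\lambda t}\ell+e^{-\lambda h}F\cdot p_\Phi\big]dt\,d\mu_0,
\end{equation*}
where $\ell$ and $F$ are evaluated at $(Z^{\Phi_0(z),u_0,v}_t,u_0,v(t))$ and $F\cdot p_\Phi$ means $F(\cdot)\cdot p_\Phi(z)$.

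Then I would divide by $h$ and let $h\to0^+$. On the left $(1-e^{-\lambda h})/h\to\lambda$. On the right, replacing the trajectory $Z^{\Phi_0(z),u_0,v}_t$ by $\Phi_0(z)$ inside $\ell$ and $F$ costs $O(h^2)$ by the Lipschitz bounds (since $\|Z^{\Phi_0(z),u_0,v}_t-\Phi_0(z)\|\le h\|F\|_\infty$ for $t\le h$), and replacing $e^{-\lambda t},e^{-\lambda h}$ by $1$ costs a further $O(h^2)$; for each fixed $t$ the integrand is then bounded by $\sup_{v_0\in\V}\int_Z[\ell(\Phi_0(z),u_0,v_0)+F(\Phi_0(z),u_0,v_0)\cdot p_\Phi(z)]d\mu_0(z)$, which is independent of $v$ and $t$. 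Hence the supremum term divided by $h$ tends to that quantity, so $\lambda\mathcal V^+(\Phi_0,\mu_0)\le(\delta+\varepsilon)\|F\|_\infty+\sup_{v_0\in\V}\int_Z[\ell(\Phi_0(z),u_0,v_0)+F(\Phi_0(z),u_0,v_0)\cdot p_\Phi(z)]d\mu_0(z)$. As this holds for every $u_0\in\U$, the infimum over $u_0$ produces $\mathcal H(\mu_0,\Phi_0,p_\Phi)$ on the right, and letting $\varepsilon\to0$ gives $-\lambda\mathcal V^+(\Phi_0,\mu_0)+\mathcal H(\mu_0,\Phi_0,p_\Phi)\ge-\|F\|_\infty\delta$, the claim with $C=\|F\|_\infty$.

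The main obstacle is this final limit: the supremum runs over all time-varying controls $v\in\mathcal V$, not over constant actions, so one cannot evaluate the integrand at $t=0$ (controls need not be continuous). The resolution is to bound the time-integrand pointwise in $t$ by its supremum over constant actions $v_0\in\V$, which is exactly the inner supremum in the Hamiltonian; since we only need an upper bound, this is sufficient. A secondary delicate point is the uniformity in $(u_0,v)$ of the $\delta$-superdifferential estimate, which is automatic because $D^+_\delta$ controls all perturbations of sufficiently small norm at once and $\|\Psi_{h,u_0,v}\|_\infty\le h\|F\|_\infty$ uniformly.
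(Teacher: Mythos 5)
Your proposal is correct and follows essentially the same route as the paper: both proofs test the dynamic programming principle (Proposition \ref{prps_DPP}) against a constant control $u_0\in\U\subset\mathcal{A}_d$, linearize the terminal term via the $\delta$-superdifferential using the uniform bound $\|Z^{\cdot,u,v}_h\circ\Phi_0-\Phi_0\|_\infty\le h\|F\|_\infty$, replace the trajectory by $\Phi_0$ and the exponentials by $1$ at cost $o(h)$, bound the time-integrand pointwise in $t$ by the supremum over constant actions $v_0\in\V$ (the paper's step \eqref{eq_prps_subvis8}), and divide by $h$ before letting $h\to 0^+$, arriving at the same constant $C=\|F\|_\infty$. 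The only cosmetic difference is the order of operations --- you specialize the infimum to $u_0$ first and keep $\sup_{v}$ while linearizing uniformly in $v$, whereas the paper linearizes for general $(\alpha,v)$ and then picks an $\varepsilon t$-optimal $v$ against $u_0$ --- but this does not change the substance of the argument.
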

\begin{proof}
	Fix $\Phi_0\in \mathcal{O}(\mu_0)$ and let $p_\Phi\in D^+_\delta \mathcal{V}^+(\Phi_0,\mu_0)$. We have, for all $t\ge 0$ and for all $(\alpha,v)\in\mathcal{A}_d\times\mathcal{V}$,
	\begin{equation}\label{eq_prps_subvis1}
		\begin{aligned}
			&\mathcal{V}^+(\Phi_0,\mu_0)-\mathcal{V}^+(Z^{\Phi_0(\cdot),\alpha(v),v}_t,\mu_0)+\int_X \big(Z^{\Phi_0(\cdot),\alpha(v),v}_t-\Phi_0\big)(z)\cdot p_\Phi(z) d\mu_0(z)\\
			\ge&\big\|Z^{\Phi_0(\cdot),\alpha(v),v}_t-\Phi_0\big\|_{\infty}\big[-\delta-\varepsilon\big(\big\|Z^{\Phi_0(\cdot),\alpha(v),v}_t-\Phi_0\big\|_\infty\big)\big]
		\end{aligned}
	\end{equation}
	where $\varepsilon(\gamma)\to 0$ as $\gamma\to 0+$. Since
	$$Z^{\Phi_0(z),\alpha(v),v}_t=\Phi_0(z)+\int^t_0 F\big(Z^{\Phi_0(z),\alpha(v),v}_s,\alpha(v)(s),v(s)\big)ds,$$
	we denote by $F(s,z,\Phi_0,\alpha,v):=F\big(Z^{\Phi_0(z),\alpha(v),v}_s,\alpha(v)(s),v(s)\big)$, and \eqref{eq_prps_subvis1} can be rewritten as
	\begin{equation}\label{eq_prps_subvis2}
		\begin{aligned}
			&\int_Z\int^t_0 F(s,z,\Phi_0,\alpha,v)\cdot p_\Phi(z) dsd\mu_0(z)+\mathcal{V}^+(\Phi_0,\mu_0)-\mathcal{V}^+(Z^{\Phi_0(\cdot),\alpha(v),v}_t,\mu_0)\\
			\ge&\big\|Z^{\Phi_0(\cdot),\alpha(v),v}_t-\Phi_0\big\|_{\infty}\big[-\delta-\varepsilon\big(\big\|Z^{\Phi_0(\cdot),\alpha(v),v}_t-\Phi_0\big\|_\infty\big)\big]
		\end{aligned}
	\end{equation}
	But $F$ is bounded, thus $\big\|Z^{\Phi_0(\cdot),\alpha(v),v}_t-\Phi_0\big\|_{\infty}\le t\|F\|_\infty$, and it follows from \eqref{eq_prps_subvis2} that
	\begin{equation}\label{eq_prps_subvis2B}
		\begin{aligned}
			&\mathcal{V}^+(Z^{\Phi_0(\cdot),\alpha(v),v}_t,\mu_0)\\
	\le &\int_Z\int^t_0 F(s,z,\Phi_0,\alpha,v)\cdot p_\Phi(z) dsd\mu_0(z)+\mathcal{V}^+(\Phi_0,\mu_0)+ t\|F\|_\infty\big[\delta+\varepsilon(t\|F\|_\infty)\big]
		\end{aligned}
	\end{equation}
	By the dynamic programming principle (Proposition \ref{prps_DPP}), for $t>0$ sufficiently small,
	\begin{equation}\label{eq_prps_subvis3}
		\begin{aligned}
			\mathcal{V}^+(\Phi_0,\mu_0)=&\inf_{\alpha\in\mathcal A_d}\sup_{v\in\mathcal V}\int_{Z}d\mu_0(z)\Big[\int^{t}_0e^{-\lambda s} \ell(Z^{\Phi_0(z),\alpha(v),v}_s,\alpha(v)(s),v(s))ds
\Big]\\
			&+e^{-\lambda t}\mathcal{V}^+(Z^{\Phi_0(\cdot),\alpha(v),v}_t,\mu_0).
		\end{aligned}
	\end{equation}
	Substituting \eqref{eq_prps_subvis2B} into the last term on the right-hand side of the above inequality yields 
	\begin{equation}\label{eq_prps_subvis4}
		\begin{aligned}
			(1-e^{-\lambda t})\mathcal{V}^+(\Phi_0,\mu_0)		\le&\inf_{\alpha\in\mathcal{A}_d}\sup_{v\in\mathcal{V}}\int_{Z}d\mu_0(z)\Big\{\int^{t}_0e^{-\lambda s} \ell(Z^{\Phi_0(z),\alpha(v),v}_s,\alpha(v)(s),v(s))ds
\\
			&+e^{-\lambda t}\int^t_0 F(s,z,\Phi_0,\alpha,v)\cdot p_\Phi(z)ds\Big\}+ t\|F\|_\infty\big[\delta+\varepsilon(t\|F\|_\infty)\big]
		\end{aligned}
	\end{equation}
	In particular, let us fix $u_0\in\U\subset\mathcal{A}_d$ a constant control and let $v\in\mathcal V$ be $\varepsilon t$-optimal against $u_0\in\mathcal A_d$ on the right-hand side of \eqref{eq_prps_subvis4}. We obtain
	\begin{equation}\label{eq_prps_subvis5}
		\begin{aligned}
			(1-e^{-\lambda t})\mathcal{V}^+(\Phi_0,\mu_0)\le&\int_{Z}d\mu_0(z)\Big\{\int^{t}_0e^{-\lambda s} \ell(Z^{\Phi_0(z),u_0,v}_s,u_0,v(s))ds
\\
			&+e^{-\lambda t}\int^t_0 F(s,z,\Phi_0,u_0,v)\cdot p_\Phi(z) ds\Big\}+ t\|F\|_\infty\big[\delta+\varepsilon(t\|F\|_\infty)\big]
		\end{aligned}
	\end{equation}
	
	Now we estimate the terms on both sides of the above inequality. Clearly, one has
	\begin{equation}\label{eq_prps_subvis6}
		(1-e^{-\lambda t})\mathcal{V}^+(\Phi_0,\mu_0)=\lambda t\mathcal{V}^+(\Phi_0,\mu_0)+o(t).
	\end{equation}
	By the regularity of the trajectory $t\mapsto Z_t$ and the Lipschitz continuity of $F$ and $\ell$, for the constant $L=\max(L_F,L_\ell)$ independent of the choice of $t>0$, we have $\forall s\in [0,t]$ and $\forall z\in Z$, 
	\begin{equation}\label{eq_prps_subvis7}
		\begin{aligned}
			\big|\ell\big(Z^{\Phi_0(z),u_0,v}_s,u_0,v(s))-\ell(\Phi_0(z),u_0,v(s)\big)\big|&\le Lt\|F\|_\infty;\\
			\big\|F(Z^{\Phi_0(z),u_0,v}_s,u_0,v(s))-F(\Phi_0(z),u_0,v(s))\big\|&\le Lt\|F\|_\infty.
		\end{aligned}
	\end{equation}
	Therefore, $\forall z\in Z$,
	\begin{equation}
		\begin{aligned}
			\int^{t}_0e^{-\lambda s} \ell\big(Z^{\Phi_0(z),u_0,v}_s,u_0,v(s)\big)ds&\le \int^{t}_0\ell\big(\Phi_0(z),u_0,v(s)\big)ds+o(t);\\
			e^{-\lambda t}\int^t_0 F(s,z,\Phi_0,u_0,v)\cdot p_\Phi(z) ds&\le \int^t_0 F(\Phi_0(z),u_0,v(s))\cdot p_\Phi(z) ds+o(t).
		\end{aligned}
	\end{equation}
	The above two inequalities together yield furthermore
	\begin{equation}\label{eq_prps_subvis8}
		\begin{aligned}
			&\int_{Z}d\mu_0(z)\Big\{\int^{t}_0e^{-\lambda s} \ell\big(Z^{\Phi_0(z),u_0,v}_s,u_0,v(s)\big)ds+e^{-\lambda t}\int^t_0  F(s,\Phi_0,u_0,v)\cdot p_\Phi(z)ds\Big\}\\
			\le& \int_{Z}d\mu_0(z)\Big\{\int^{t}_0\ell\big(\Phi_0(z),u_0,v(s)\big)+ F(\Phi_0(z),u_0,v(s))\cdot p_\Phi(z) ds\Big\}+o(t)\\
			\le& t\sup_{v_0\in\mathbb V}\int_{Z}d\mu_0(z)\big[\ell\big(\Phi_0(z),u_0,v_0\big)+ F(\Phi_0(z),u_0,v_0)\cdot p_\Phi(z)\big]+o(t)
		\end{aligned}
	\end{equation}
	In combining \eqref{eq_prps_subvis5}-\eqref{eq_prps_subvis8}, we obtain, 
	\begin{equation}\label{eq_prps_subvis9}
		\begin{aligned}
			\lambda t\mathcal{V}^+(\Phi_0,\mu_0)\le&t\sup_{v_0\in\mathbb V}\int_{X}d\mu_0(z)\big[\ell\big(\Phi_0(z),u_0,v_0\big)+ F(\Phi_0(z),u_0,v_0)\cdot p_\Phi(z)\big]\\
		&+o(t)+ t\|F\|_\infty\big[\delta+\varepsilon(t\|F\|_\infty)\big]
\end{aligned}
	\end{equation}
	But $u_0\in\mathbb U$ in the above inequality is arbitrary, and taking the infimum of both sides of \eqref{eq_prps_subvis9} over $u_0\in\U$ yields
	\begin{equation}
		\begin{aligned}
			\lambda t\mathcal{V}^+(\Phi_0,\mu_0)\le&t\inf_{u_0\in\mathbb U}\sup_{v_0\in\mathbb V}\int_{Z}d\mu_0(z)\big[\ell\big(\Phi_0(z),u_0,v_0\big)+ F\big(\Phi_0(z), u_0,v_0\big)\cdot p_\Phi(z)\big]\\
			&+o(t)+ t\|F\|_\infty\big[\delta+\varepsilon(t\|F\|_\infty)\big]
		\end{aligned}
	\end{equation}
	Finally, we divide both sides of the above inequality by $t$ and then pass $t\to 0+$ to obtain the desired inequality:
	\begin{equation}
		-\lambda \mathcal{V}^+(\Phi_0,\mu_0)+\inf_{u_0\in\mathbb U}\sup_{v_0\in\V}\int_{Z}d\mu_0(z)\big[\ell\big(\Phi_0(z),u_0,v_0\big)+ F(\Phi_0(z),u_0,v_0)\cdot p_\Phi(z)\big]\ge -C\delta,
	\end{equation}
	where $C=\|F\|_\infty$. The proof is complete.
\end{proof}

\begin{mcor}\label{cor_supvis}
	For any $\mu_0\in\mathcal{P}(Z)$, the extended lower value function $\mathcal{V}^-(\mu,\Phi)$ is a viscosity supersolution to \eqref{HJB} on
	$\mathcal O:=\{\Phi\in C(Z;\mathbb{R}^n):\Phi(\supp{\mu_0})\subset \mathbb{R}^n\times(-\infty,M_0)\}.$
\end{mcor}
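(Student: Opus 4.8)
The plan is to deduce the supersolution property of $\mathcal{V}^-$ from the subsolution property of $\mathcal{V}^+$ (Proposition \ref{prps_subvis}) by exploiting the symmetry between the two players. Consider the \emph{dual game} with the same dynamics \eqref{dym_sys2} and the same signal-revelation mechanism, but in which the roles of the players are interchanged: the player controlling $v$ becomes the minimizer, the player controlling $u$ becomes the maximizer, and the running cost is replaced by $-\ell$. Because the information is \emph{symmetric}---the revealed signal $Z^{z_0}_{\mathcal T}$ is announced to both players simultaneously and the hitting time $\mathcal T$ depends only on the common dynamics---interchanging the players produces a game in exactly the same class, to which all of the preceding results apply. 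Writing $\tilde{\mathcal V}^+$ for the extended upper value of this dual game, the definitions of $\mathcal V^\pm$ give $\tilde{\mathcal V}^+(\Phi,\mu_0)=\inf_{B\in\mathcal B_s}\sup_{A\in\mathcal A_s}\big(-\mathcal J(\Phi,\mu_0,A,B)\big)=-\mathcal V^-(\Phi,\mu_0)$.

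First I would check that the dual game satisfies Assumptions \ref{Assum1}, Assumptions \ref{Assum2} and the Isaacs condition: the dynamics $F$ is unchanged, and $-\ell$ has the same bounds and Lipschitz constant as $\ell$, so only the Isaacs condition requires attention. A short computation shows that the Hamiltonian of the dual game is
$$\inf_{v_0\in\V}\sup_{u_0\in\U}\int_{Z}d\mu_0(z)\big[{-\ell(\Phi(z),u_0,v_0)}+F(\Phi(z),u_0,v_0)\cdot p(z)\big],$$
and, applying the Isaacs identity \eqref{eq_IC5} with $p$ replaced by $-p$ together with an overall sign change, this equals $-\mathcal H(\mu_0,\Phi,-p)=\tilde{\mathcal H}(\mu_0,\Phi,p)$; in particular the $\inf\sup$ and $\sup\inf$ coincide, so the dual Isaacs condition holds and $\tilde{\mathcal H}$ is well defined. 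Proposition \ref{prps_subvis} applied to the dual game then yields that $\tilde{\mathcal V}^+=-\mathcal V^-$ is a viscosity subsolution to \eqref{HJB2} on $\mathcal O(\mu_0)$.

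It remains to transfer this back to $\mathcal V^-$, which is precisely the (symmetric) content of Lemma \ref{lem_subvis} applied to \eqref{HJB2} in place of \eqref{HJB}, using $\tilde{\tilde{\mathcal H}}=\mathcal H$. Fix $\Phi_0\in\mathcal O(\mu_0)$ and $p_\Phi\in D^-_\delta\mathcal V^-(\Phi_0,\mu_0)$; as in the proof of Lemma \ref{lem_subvis}, one checks that $-p_\Phi\in D^+_\delta(-\mathcal V^-)(\Phi_0,\mu_0)$. Inserting $-p_\Phi$ into the subsolution inequality for $-\mathcal V^-$ with respect to \eqref{HJB2} gives $\lambda\mathcal V^-(\Phi_0,\mu_0)+\tilde{\mathcal H}(\mu_0,\Phi_0,-p_\Phi)\ge -C\delta$, and since $\tilde{\mathcal H}(\mu_0,\Phi_0,-p_\Phi)=-\mathcal H(\mu_0,\Phi_0,p_\Phi)$ this rearranges to $-\lambda\mathcal V^-(\Phi_0,\mu_0)+\mathcal H(\mu_0,\Phi_0,p_\Phi)\le C\delta$, which is exactly the supersolution condition for \eqref{HJB}. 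The step requiring the most care is the Hamiltonian bookkeeping in the middle paragraph---verifying that the dual game genuinely inherits the Isaacs condition and that its Hamiltonian is $\tilde{\mathcal H}$---since everything else is a formal consequence of Proposition \ref{prps_subvis} and Lemma \ref{lem_subvis}. Alternatively, one could bypass the dual game and reprove the estimate directly, mimicking the proof of Proposition \ref{prps_subvis} but starting from the $\sup_{\beta}\inf_{u}$ dynamic programming principle of Proposition \ref{prps_DPP} for $\mathcal V^-$, fixing a constant control $v_0\in\V$ for Player 2 and choosing an $\varepsilon t$-optimal $u$ for Player 1.
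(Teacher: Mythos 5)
Your proposal is correct and follows essentially the same route as the paper: the paper's proof also observes that $-\mathcal{V}^-$ is the extended upper value of the dual game with running cost $-\ell$, applies Proposition \ref{prps_subvis} to conclude it is a viscosity subsolution of \eqref{HJB2} on $\mathcal{O}(\mu_0)$, and then invokes (the symmetric form of) Lemma \ref{lem_subvis} to pass to the supersolution property of $\mathcal{V}^-$ for \eqref{HJB}. Your middle paragraph merely makes explicit the Hamiltonian bookkeeping and the dual Isaacs condition, which the paper leaves implicit in its one-line argument.
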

\begin{proof}
	It suffices to see that $-\mathcal{V}^-(\Phi_0,\mu_0)$ is the extended upper value of another differential game with the same dynamics and game procedure as $\mathcal{G}(\mu_0)$ and with the running cost
	$$\int^\infty_0e^{-\lambda t}\big[-\ell\big(z(t),u(t),v(t)\big)\big]dt.$$
	Therefore by Proposition \ref{prps_subvis}, for $\mu_0\in\mathcal{P}(Z)$, $-\mathcal{V}^-$ is a viscosity subsolution to \eqref{HJB2} on $\mathcal{O}$. Applying Lemma \ref{lem_subvis}, we obtain that $\mathcal{V}^-$ is a viscosity super solution to \eqref{HJB} on $\mathcal{O}$. The proof is complete.
\end{proof}
Next, we establish the following comparison principle for the Hamilton-Jacobi-Isaacs equation \eqref{HJB}.
\begin{mprps}\label{prps_CP}
	For $\mu_0\in\Delta(Z)$, let $W_1:C(Z,Z)\times\Delta(Z)\to \R$ and $W_2:C(Z,Z)\times\Delta(Z)\to \R$ be respectively a viscosity subsolution and a viscosity supersolution to \eqref{HJB} on $O$. We assume that:
	\begin{enumerate}[label=(H\arabic*)]
		\item both $W_1$ and $W_2$ are bounded continuous;\label{CP_H1}
		\item for any $\mu\in\Delta(Z)$, both maps is $k$-Lipschitz continuous with $k>0$, namely for $i=1,2$,\label{CP_H2}
			$$|W_i(\Phi,\mu)-W_i(\Psi,\mu)|\le k\|\Phi-\Psi\|_{L^2_\mu};$$
		\item for any $\Phi\in O^c$, $W_1(\mu_0,\Phi)\le W_2(\mu_0,\Phi)$.\label{CP_H3}
	\end{enumerate}
	Then for any $\Phi\in O$, $W_1(\Phi,\mu_0)\le W_2(\Phi,\mu_0)$.
\end{mprps}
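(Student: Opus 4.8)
The plan is to argue by contradiction via the classical doubling-of-variables technique, carried out in the Hilbert space $L^2_{\mu_0}:=L^2_{\mu_0}(Z,\mathbb R^{n+1})$ into which $C(Z,Z)$ embeds and with respect to whose norm both $W_1,W_2$ are $k$-Lipschitz by \ref{CP_H2}. Suppose, to the contrary, that
\begin{equation*}
	\sigma:=\sup_{\Phi\in O}\big(W_1(\Phi,\mu_0)-W_2(\Phi,\mu_0)\big)>0 .
\end{equation*}
For $\varepsilon>0$ I would study the doubled functional
\begin{equation*}
	\Theta_\varepsilon(\Phi,\Psi):=W_1(\Phi,\mu_0)-W_2(\Psi,\mu_0)-\tfrac{1}{2\varepsilon}\|\Phi-\Psi\|_{L^2_{\mu_0}}^2
\end{equation*}
on $C(Z,Z)\times C(Z,Z)$; testing on the diagonal shows $\sup\Theta_\varepsilon\ge\sigma$, while \ref{CP_H1} makes it finite.

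The main obstacle is that, the ambient space being infinite-dimensional, $\Theta_\varepsilon$ need not attain its supremum. To overcome this I would invoke a variational principle: since $L^2_{\mu_0}$ is reflexive and hence enjoys the Radon--Nikodym property, Stegall's principle (equivalently the Borwein--Preiss smooth variational principle) furnishes, for each $\varepsilon$, a vector $p_\varepsilon\in L^2_{\mu_0}$ with $\|p_\varepsilon\|_{L^2_{\mu_0}}\le\delta_\varepsilon\to0$ such that $(\Phi,\Psi)\mapsto\Theta_\varepsilon(\Phi,\Psi)+\langle p_\varepsilon,\Phi\rangle_{L^2_{\mu_0}}$ attains a genuine maximum at some $(\Phi_\varepsilon,\Psi_\varepsilon)$. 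Freezing one variable and expanding the square exactly as in the computation showing that the gradient of a quadratic penalty lies in the superdifferential, and using $\|\cdot\|_{L^2_{\mu_0}}\le\|\cdot\|_\infty$ (as $\mu_0$ is a probability measure) to control the quadratic remainder divided by $\|\cdot\|_\infty$, I obtain that $q^1_\varepsilon:=\tfrac1\varepsilon(\Phi_\varepsilon-\Psi_\varepsilon)-p_\varepsilon$ lies in $D^+_\delta W_1(\Phi_\varepsilon,\mu_0)$ and $q^2_\varepsilon:=\tfrac1\varepsilon(\Phi_\varepsilon-\Psi_\varepsilon)$ lies in $D^-_\delta W_2(\Psi_\varepsilon,\mu_0)$ for every $\delta>0$, the two momenta differing only by $p_\varepsilon$. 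The standard penalization estimates yield $\|\Phi_\varepsilon-\Psi_\varepsilon\|_{L^2_{\mu_0}}\to0$ and, crucially, $\tfrac1\varepsilon\|\Phi_\varepsilon-\Psi_\varepsilon\|_{L^2_{\mu_0}}^2\to0$ as $\varepsilon\to0$. Moreover \ref{CP_H3} together with \ref{CP_H2} forces $\Phi_\varepsilon,\Psi_\varepsilon\in O$ for small $\varepsilon$: were $\Phi_\varepsilon\in O^c$, then $W_1(\Phi_\varepsilon,\mu_0)\le W_2(\Phi_\varepsilon,\mu_0)$ and $|W_2(\Phi_\varepsilon,\mu_0)-W_2(\Psi_\varepsilon,\mu_0)|\le k\|\Phi_\varepsilon-\Psi_\varepsilon\|_{L^2_{\mu_0}}\to0$ would contradict $W_1(\Phi_\varepsilon,\mu_0)-W_2(\Psi_\varepsilon,\mu_0)\ge\sigma-o(1)$, and symmetrically for $\Psi_\varepsilon$.

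With the maximizers in the open set $O$, I apply the subsolution inequality to $q^1_\varepsilon$ and the supersolution inequality to $q^2_\varepsilon$, letting $\delta\to0$ in the two definitions so that the $C\delta$ terms vanish, and subtract to obtain
\begin{equation*}
	\lambda\big(W_1(\Phi_\varepsilon,\mu_0)-W_2(\Psi_\varepsilon,\mu_0)\big)\le \mathcal H(\mu_0,\Phi_\varepsilon,q^1_\varepsilon)-\mathcal H(\mu_0,\Psi_\varepsilon,q^2_\varepsilon).
\end{equation*}
The right-hand side I estimate using that, by the Isaacs structure and the elementary bound underlying Lemma \ref{lem_regtech}, $|\mathcal H(\mu_0,\Phi,p)-\mathcal H(\mu_0,\Psi,p)|\le\sup_{u,v}\int_Z\big|\ell(\Phi(z),u,v)-\ell(\Psi(z),u,v)+\big(F(\Phi(z),u,v)-F(\Psi(z),u,v)\big)\cdot p(z)\big|d\mu_0(z)$; inserting $p=q^2_\varepsilon=\tfrac1\varepsilon(\Phi_\varepsilon-\Psi_\varepsilon)$ and invoking the uniform Lipschitz continuity of $F$ and $\ell$ from Assumptions \ref{Assum1} and \ref{Assum2} bounds this by $L\|\Phi_\varepsilon-\Psi_\varepsilon\|_{L^2_{\mu_0}}+\tfrac{L}{\varepsilon}\|\Phi_\varepsilon-\Psi_\varepsilon\|_{L^2_{\mu_0}}^2$, while replacing $q^2_\varepsilon$ by $q^1_\varepsilon$ costs only $\|F\|_\infty\,O(\delta_\varepsilon)$ (since $F$ is bounded). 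All three contributions tend to $0$. Since $W_1(\Phi_\varepsilon,\mu_0)-W_2(\Psi_\varepsilon,\mu_0)\ge\sigma-o(1)$, passing to the limit yields $\lambda\sigma\le0$, contradicting $\lambda>0$ and $\sigma>0$; hence $W_1\le W_2$ on $O$. The delicate points to be executed with care are the rigorous application of the variational principle on a bounded closed convex subset of $L^2_{\mu_0}$ containing the relevant maps (so that a true maximizer exists) and the verification of the two differential inclusions in the mixed norm of the problem; the Hamiltonian comparison itself is routine given the Lipschitz bounds.
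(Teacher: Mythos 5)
Your outline reproduces the paper's skeleton almost exactly: argue by contradiction, double the variables with a quadratic $L^2_{\mu_0}$ penalty, derive $\|\Phi_\varepsilon-\Psi_\varepsilon\|_{L^2_{\mu_0}}=O(\varepsilon)$ from \ref{CP_H2}, rule out the boundary via \ref{CP_H3}, read the two momenta off the penalty term, and close with the Lipschitz estimate on $\mathcal{H}$, which is precisely the paper's \eqref{eq_cp7}. The one step where you deviate is the one that fails. Stegall's principle requires a \emph{closed} bounded convex subset of the RNP space $L^2_{\mu_0}$, and $C(Z,Z)$ is not closed in $L^2_{\mu_0}$: its $L^2$-closure consists of merely measurable $Z$-valued maps. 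Hypothesis \ref{CP_H2} does let you extend $W_1,W_2$ Lipschitz-continuously to that closure, but the strong maximizer $(\Phi_\varepsilon,\Psi_\varepsilon)$ that Stegall then produces will in general be discontinuous, and at such a point nothing you need survives: the sub-/supersolution property is only posited at points of $O$, an open subset of $C(Z,Z)$ for the sup norm; the test momenta in the definitions of $D^\pm_\delta$ are required to lie in $C(Z,Z)$, while your $q^1_\varepsilon=\frac{1}{\varepsilon}(\Phi_\varepsilon-\Psi_\varepsilon)-p_\varepsilon$ is only an $L^2_{\mu_0}$ element ($p_\varepsilon$ included); and the Lipschitz extension of $W_1$ to the closure is not known to be a subsolution there, since the dynamic programming principle underlying Proposition \ref{prps_subvis} lives on $C(Z,Z)$. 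A smaller defect: convexity of $C(Z,Z)$ inside $L^2_{\mu_0}$ would require $Z=X\times[0,M_0+\eta]$ to be convex, whereas $X$ is only assumed compact. So the ``genuine maximum'' never materializes inside the set where \eqref{HJB} holds, and with it the plan of letting $\delta\to 0$ in the two differential inclusions collapses. You flagged this as a point to be ``executed with care,'' but it is not a technicality --- it is where the approach breaks.

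This is exactly the difficulty the paper's $\delta$-relaxed differentials $D^\pm_\delta$ and the $C\delta$ slack in the sub-/supersolution inequalities are designed for. The paper applies Ekeland's variational principle on the complete metric space $C(Z,Z)^2$ with the sup-norm metric, which keeps the distinguished pair $(\bar\Phi_1,\bar\Phi_2)$ inside $C(Z,Z)^2$ at the price of only approximate extremality, with error $\varepsilon\big(\|\Phi_1-\bar\Phi_1\|_\infty+\|\Phi_2-\bar\Phi_2\|_\infty\big)$; expanding the quadratic penalty then yields $\frac{2}{\varepsilon}(\bar\Phi_1-\bar\Phi_2)\in D^+_\varepsilon W_1(\bar\Phi_1,\mu_0)\cap D^-_\varepsilon W_2(\bar\Phi_2,\mu_0)$ --- note the fixed $\delta=\varepsilon$, not an arbitrary $\delta\to 0$ --- and the $C\delta=C\varepsilon$ slack absorbs the Ekeland error, giving $W_2(\bar\Phi_2,\mu_0)-W_1(\bar\Phi_1,\mu_0)\ge -C'\varepsilon$ and the contradiction for $\varepsilon$ small. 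If you insist on a variational principle, the repair is not Stegall but Ekeland (or Borwein--Preiss in its metric-space form on $C(Z,Z)^2$ with the sup norm): in either case the perturbation is a sup-norm term, not an $L^2_{\mu_0}$ pairing with a continuous kernel, so it cannot be folded into the momentum and must instead be absorbed by the $\delta$ of $D^\pm_\delta$ --- which is exactly why those relaxed notions, rather than exact super-/subdifferentials, appear in the paper's definitions. With that substitution, the remaining parts of your argument (the $O(\varepsilon)$ penalization bound, the boundary exclusion via \ref{CP_H3}, and the Hamiltonian estimate $L\|\bar\Phi_1-\bar\Phi_2\|_{L^2_{\mu_0}}+\frac{2L}{\varepsilon}\|\bar\Phi_1-\bar\Phi_2\|^2_{L^2_{\mu_0}}=O(\varepsilon)$) go through essentially as you describe.
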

\begin{proof}
	We prove the proposition by contradiction. Assume that there exists some $\alpha>0$ and some $\Phi_0\in O$ such that
	\begin{equation}\label{eq_cpA}
		W_2(\Phi_0,\mu_0)-W_1(\Phi_0,\mu_0)\le-\alpha.
	\end{equation}
	By the regularity condition \ref{CP_H2} and the boundary condition \ref{CP_H3}, we deduce that the set 
	$$\mathcal F_{\alpha}(\mu_0):=\{\Phi\in C(Z,Z):\ W_2(\Phi,\mu_0)-W_1(\Phi,\mu_0)\le -\alpha\}\subset O$$ 
	is closed, and in addition, for some $\delta>0$,
	$$\inf \{\|\Phi-\Psi\|_{L^2_{\mu_0}}: \Phi\in \mathcal F_{\alpha}(\mu_0),\ \Psi\in O^c\}>\delta.$$ 
	We employ the double-variable technique by setting, for $\varepsilon>0$ sufficiently small,
	\begin{equation*}
		\begin{aligned}
			W_\varepsilon:C(Z,Z)^2&\to\mathbb R\\
			(\Phi_1,\Phi_2)&\mapsto W_\varepsilon(\Phi_1,\Phi_2)=W_2(\Phi_2,\mu_0)-W_1(\Phi_1,\mu_0)+\frac{1}{\varepsilon}\|\Phi_1-\Phi_2\|^2_{L^2_{\mu_0}}
		\end{aligned}
	\end{equation*}
	Notice that with $W_1$, $W_2$ both bounded, the map $W_\varepsilon$ is bounded from below and 
	$$-\infty<\inf_{(\Phi_1,\Phi_2)\in C(Z,Z)^2} W_\varepsilon(\Phi_1,\Phi_2)\le W_\varepsilon(\Phi_0,\Phi_0) \le -\alpha.$$
	Since $C(Z,Z)^2$ is a complete metric space while $C(Z,Z)$ is equipped with the infinity norm, and $W_\varepsilon$ is continuous, by Ekeland's variational principle (cf. \cite{EKELAND74}), there exists a pair $(\bar\Phi_1,\bar\Phi_2)\in C(Z,Z)^2$ such that
	\begin{enumerate}[label=(E\arabic*)]
		\item $W_\varepsilon(\bar\Phi_1,\bar\Phi_2)\le W_\varepsilon(\Phi_0,\Phi_0)$;\label{CP_E1}
		\item for all $(\Phi_1,\Phi_2)\in C(Z,Z)^2$, 		
		$W_\varepsilon(\bar\Phi_1,\bar\Phi_2)\le W_\varepsilon(\Phi_1,\Phi_2)+\varepsilon(\|\Phi_1-\bar\Phi_1\|_\infty+\|\Phi_2-\bar\Phi_2\|_\infty).$\label{CP_E2}
	\end{enumerate}
	\paragraph*{Step 1.} Let us show that, for $\varepsilon>0$ sufficiently small, $\bar\Phi_1\in O$ and $\bar\Phi_2\in O$.\\
	Applying \ref{CP_E2} with $(\Phi_1,\Phi_2)=(\bar\Phi_2,\bar\Phi_2)$ yields
	\begin{equation}
		-W_1(\bar\Phi_1,\mu_0)+\frac{1}{\varepsilon}\|\bar\Phi_1-\bar\Phi_2\|^2_{L^2_{\mu_0}}\le -W_1(\bar\Phi_2,\mu_0)+\varepsilon\|\bar\Phi_2-\bar\Phi_1\|_\infty.
	\end{equation}
	Without loss of generality, we choose the Lipschitz constant $k\ge 2\max_{z\in Z}\|z\|$ in \ref{CP_H2}, and the above inequality further implies
	\begin{equation}
		\frac{1}{\varepsilon}\|\bar\Phi_1-\bar\Phi_2\|^2_{L^2_{\mu_0}}\le W_1(\bar\Phi_1,\mu_0)-W_1(\bar\Phi_2,\mu_0)+k\varepsilon\le k \|\bar\Phi_1-\bar\Phi_2\|_{L^2_{\mu_0}}+k\varepsilon,
	\end{equation}
	Consequently, $q=\|\bar\Phi_1-\bar\Phi_2\|_{L^2_{\mu_0}}$ verifies
	$$\frac{1}{\varepsilon}q^2-kq-k\varepsilon\le 0,$$
	which in turn yields:
	\begin{equation}\label{eq_cp0}
		\|\bar\Phi_1-\bar\Phi_2\|_{L^2_{\mu_0}}\le \frac{\varepsilon}{2} (k+\sqrt{k^2+4k}).
	\end{equation}		
	Therefore, it suffices to choose $\varepsilon\le (k^2+k\sqrt{k^2+4k})^{-1}\alpha $, and we have
	\begin{equation}\label{eq_cp1}
		\begin{aligned}
			-\alpha\ge W_\varepsilon(\bar\Phi_1,\bar\Phi_2)\ge& W_2(\bar\Phi_2,\mu_0)-W_1(\bar\Phi_1,\mu_0)\\
			\ge& W_2(\bar\Phi_1,\mu_0)-W_1(\bar\Phi_1,\mu_0)-\frac{\varepsilon}{2} k(k+\sqrt{k^2+4k})\\
			\ge& W_2(\bar\Phi_1,\mu_0)-W_1(\bar\Phi_1,\mu_0)-\frac{\alpha}{2}.
		\end{aligned}
	\end{equation}		
	Hence $\bar\Phi_1\in \mathcal F_{\alpha/2}(\mu_0)\subset O$. Exchanging the roles of $\bar\Phi_1$ and $\bar\Phi_2$ in \eqref{eq_cp1}, we obtain $\bar\Phi_2\in O$.
	\paragraph*{Step 2.}
	Let us show that $\frac{2}{\varepsilon}(\bar\Phi_1-\bar\Phi_2)\in D^+_\varepsilon W_1(\bar\Phi_1,\mu_0)$. By \ref{CP_E2}, for any $\Phi\in C(Z,Z)$,
	\begin{equation}
		W_\varepsilon(\bar\Phi_1,\bar\Phi_2)\le W_\varepsilon(\Phi,\bar\Phi_2)+\varepsilon\|\Phi-\bar\Phi_1\|_\infty,
	\end{equation}
	and consequently,
	\begin{equation}
		-W_1(\bar\Phi_1,\mu_0)+\frac{1}{\varepsilon}\|\bar\Phi_1-\bar\Phi_2\|^2_{L^2_{\mu_0}}
		\le -W_1(\Phi,\mu_0)+\frac{1}{\varepsilon}\|\Phi-\bar\Phi_2\|^2_{L^2_{\mu_0}}+\varepsilon\|\Phi-\bar\Phi_1\|_\infty.
	\end{equation}	
	From the above inequality, we deduce that
	\begin{equation}
		\begin{aligned}
			&\varepsilon\|\Phi-\bar\Phi_1\|_\infty\ge W_1(\Phi,\mu_0)-W_1(\bar\Phi_1,\mu_0)+\frac{1}{\varepsilon}\big[\|\bar\Phi_1-\bar\Phi_2\|^2_{L^2_{\mu_0}}-\|\Phi-\bar\Phi_2\|^2_{L^2_{\mu_0}}\big]\\
			=& W_1(\Phi,\mu_0)-W_1(\bar\Phi_1,\mu_0)+\frac{1}{\varepsilon}\big[-\|\Phi-\bar\Phi_1\|^2_{L^2_{\mu_0}}-2\langle \Phi-\bar\Phi_1,\bar\Phi_1-\bar\Phi_2\rangle_{L^2_{\mu_0}}\big]
		\end{aligned}
	\end{equation}	
	Dividing both sides of the last inequality above by $\|\Phi-\bar\Phi_1\|_\infty$ yields
	\begin{equation}\label{eq_cp2}
		\begin{aligned}
			&\frac{W_1(\Phi,\mu_0)-W_1(\bar\Phi_1,\mu_0)-\frac{2}{\varepsilon}\langle \Phi-\bar\Phi_1,\bar\Phi_1-\bar\Phi_2\rangle_{L^2_{\mu_0}}}{\|\Phi-\bar\Phi_1\|_\infty}\\
			&\le \varepsilon+\frac{\|\Phi-\bar\Phi_1\|^2_{L^2_{\mu_0}}}{\varepsilon\|\Phi-\bar\Phi_1\|_\infty}
			\le \varepsilon+\frac{\|\Phi-\bar\Phi_1\|^2_{\infty}}{\varepsilon\|\Phi-\bar\Phi_1\|_\infty}
			= \varepsilon+\frac{\|\Phi-\bar\Phi_1\|_{\infty}}{\varepsilon}
		\end{aligned}
	\end{equation}	
	Passing $\|\Phi-\bar\Phi_1\|_{\infty}\to 0+$ on both sides of \eqref{eq_cp2}, we obtain:
	\begin{equation}\label{eq_cp3}
		\limsup_{\|\Phi-\bar\Phi_1\|_{\infty}\to 0}\frac{W_1(\Phi,\mu_0)-W_1(\bar\Phi_1,\mu_0)-\frac{2}{\varepsilon}\langle \Phi-\bar\Phi_1,\bar\Phi_1-\bar\Phi_2\rangle_{L^2_{\mu_0}}}{\|\Phi-\bar\Phi_1\|_\infty}\\
		\le \varepsilon.
	\end{equation}
	Therefore, $\frac{2}{\varepsilon}(\bar\Phi_1-\bar\Phi_2)\in D^+_\varepsilon W_1(\bar\Phi_1,\mu_0)$. As $W_1$ is a viscosity subsolution to \eqref{HJB} on $O$, we denote by $C>0$ the constant such that both inequalities for the sub- and supersolutions hold, and we have
	\begin{equation}\label{eq_cp4}
		\begin{aligned}
			-\lambda W_1(\bar\Phi_1,\mu_0)+\inf_{u_0\in\mathbb U}\sup_{v_0\in\V}\int_{Z}d\mu_0(z)\big[\ell\big(\bar\Phi_1(z),u_0,v_0\big)\\
			+\frac{2}{\varepsilon} F(\bar\Phi_1(z),u_0,v_0)\cdot\big(\bar\Phi_1(z)-\bar\Phi_2(z)\big)\big]\ge -C\varepsilon.
		\end{aligned}
	\end{equation}
	
	Similarly, for any $\Psi\in C(Z,Z)$, applying \ref{CP_E2} with $(\Phi_1,\Phi_2)=(\bar\Phi_1,\Psi)$ yields
	\begin{equation}
		W_2(\bar\Phi_2,\mu_0)+\frac{1}{\varepsilon}\|\bar\Phi_1-\bar\Phi_2\|^2_{L^2_{\mu_0}}
		\le W_2(\Psi,\mu_0)+\frac{1}{\varepsilon}\|\Psi-\bar\Phi_1\|^2_{L^2_{\mu_0}}+\varepsilon\|\Psi-\bar\Phi_2\|_\infty.
	\end{equation}	
	The above inequality implies in turn
	\begin{equation}
		\begin{aligned}
			&-\varepsilon\|\Psi-\bar\Phi_2\|_\infty\le W_2(\Psi,\mu_0)-W_2(\bar\Phi_2,\mu_0)+\frac{1}{\varepsilon}\big[\|\Psi-\bar\Phi_1\|^2_{L^2_{\mu_0}}-\|\bar\Phi_1-\bar\Phi_2\|^2_{L^2_{\mu_0}}\big]\\
			=& W_2(\Psi,\mu_0)-W_2(\bar\Phi_2,\mu_0)+\frac{1}{\varepsilon}\big[\|\Psi-\bar\Phi_2\|^2_{L^2_{\mu_0}}+2\langle \Psi-\bar\Phi_2,\bar\Phi_2-\bar\Phi_1\rangle_{L^2_{\mu_0}}\big]
		\end{aligned}
	\end{equation}	
	As in \eqref{eq_cp2} and \eqref{eq_cp3}, we divide both sides of last inequality above by $\|\Psi-\bar\Phi_2\|_{L^2_{\mu_0}}$ and then we pass $\|\Psi-\bar\Phi_2\|_{L^2_{\mu_0}}\to 0$ to obtain
	 \begin{equation}\label{eq_cp5}
		\liminf_{\|\Psi-\bar\Phi_2\|_{L^2_{\mu_0}}\to 0}\frac{W_2(\Psi,\mu_0)-W_2(\bar\Phi_2,\mu_0)-\frac{2}{\varepsilon}\langle \Psi-\bar\Phi_2,\bar\Phi_1-\bar\Phi_2\rangle_{L^2_{\mu_0}}}{\|\Psi-\bar\Phi_2\|_{L^2_{\mu_0}}}\ge -\varepsilon.
	\end{equation}
	Consequently, $\frac{2}{\varepsilon}(\bar\Phi_1-\bar\Phi_2)\in D^-_\varepsilon W_2(\bar\Phi_2,\mu_0)$, and since $W_2$ is a viscosity supersolution to \eqref{HJB} on $O$, we have furthermore
	\begin{equation}\label{eq_cp6}
		\begin{aligned}
			-\lambda W_2(\bar\Phi_2,\mu_0)+\inf_{u_0\in\mathbb U}\sup_{v_0\in\V}\int_{Z}d\mu_0(z)\big[\ell\big(\bar\Phi_2(z),u_0,v_0\big)\\
			+\frac{2}{\varepsilon}  F(\bar\Phi_2(z),u_0,v_0)\cdot\big( \bar\Phi_1(z)-\bar\Phi_2(z)\big) \big]\le C\varepsilon.
		\end{aligned}
	\end{equation}
	
	In combining both \eqref{eq_cp4} and \eqref{eq_cp6}, one has by applying Lemma \ref{lem_regtech} and the Cauchy-Schwarz inequality:
	\begin{equation}\label{eq_cp7}
		\begin{aligned}
			-2C\varepsilon\le&\lambda (W_2(\bar\Phi_2,\mu_0)-W_1(\bar\Phi_1,\mu_0))\\
			+&\inf_{u_0\in\mathbb U}\sup_{v_0\in\V}\int_{Z}d\mu_0(z)\big[\ell\big(\bar\Phi_1(z),u_0,v_0\big)+\frac{2}{\varepsilon} F(\bar\Phi_1(z),u_0,v_0)\cdot\big(\bar\Phi_1(z)-\bar\Phi_2(z)\big)\big]\\
			-&\inf_{u_0\in\mathbb U}\sup_{v_0\in\V}\int_{Z}d\mu_0(z)\big[\ell\big(\bar\Phi_2(z),u_0,v_0\big)+\frac{2}{\varepsilon} F(\bar\Phi_2(z),u_0,v_0)\cdot\big(\bar\Phi_1(z)-\bar\Phi_2(z)\big) \big]\\
			\le &\lambda (W_2(\bar\Phi_2,\mu_0)-W_1(\bar\Phi_1,\mu_0))
				+L \|\bar\Phi_1-\bar\Phi_2\|_{L^2_{\mu_0}}+\frac{2L}{\varepsilon}\|\bar\Phi_1-\bar\Phi_2\|^2_{L^2_{\mu_0}},
		\end{aligned}
	\end{equation}
	where $L=\max(L_\ell,L_F)$. By \eqref{eq_cp0}, there exists some constant $K>0$ such that 
	$$\|\bar\Phi_1-\bar\Phi_2\|_{L^2_{\mu_0}}\le K\varepsilon.$$
	Hence \eqref{eq_cp7} implies furthermore
	\begin{equation}\label{eq_cp8}
		W_2(\bar\Phi_2,\mu_0)-W_1(\bar\Phi_1,\mu_0)\ge-\frac{(2C+LK+2LK^2)}{\lambda}\varepsilon
	\end{equation}
	which leads to a contradiction to \eqref{eq_cpA} and condition \ref{CP_E1} since one can choose $\varepsilon>0$ arbitrarily small. The proof is complete.
\end{proof}
Now we are ready to state and prove the main result of this section.
\begin{mthm}
	Assume that Assumptions \ref{Assum2} and Isaacs' condition \eqref{eq_IC3} hold. For all $\mu_0\in\Delta(Z)$, the extended values coincide $\mathcal{V}^+(\Phi,\mu_0)=\mathcal{V}^-(\Phi,\mu_0)=:\bar{\mathcal{V}}(\Phi,\mu_0)$ and $\bar{\mathcal{V}}(\Phi,\mu_0)$ is the unique bounded, continuous viscosity solution to \eqref{HJB} on $\mathcal{O}(\mu_0)$ which is Lipschitz continuous in $\Phi$ and verifying the following boundary condition:
	\begin{equation}
		\forall \Phi\in(\mathcal{O}(\mu_0))^c,\ \bar{\mathcal{V}}(\Phi,\mu_0)=\mathcal{V}^+(\Phi,\mu_0).
	\end{equation}
\end{mthm}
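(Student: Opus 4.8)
The plan is to assemble the statement from the regularity, viscosity, and comparison results already in place, the only genuinely new input being a two-sided application of the comparison principle for the uniqueness part. First I would dispose of the equality of the extended values. By Lemma~\ref{lem_extvalue}(ii) one has $\mathcal V^\pm(\Phi,\mu_0)=V^\pm(\Phi\sharp\mu_0)$, and since $\Phi\in C(Z,Z)$ the pushforward $\Phi\sharp\mu_0$ is supported on $Z$, hence $\Phi\sharp\mu_0\in\Delta(Z)\subset\mathcal P(\mathbb R^{n+1})$. By the invariance of $Z$ (Assumptions~\ref{Assum2}(i)) the whole game $\mathcal G(\Phi\sharp\mu_0)$ unfolds within $Z$, so only the instance of Isaacs' condition on $Z$ is relevant: for a measure supported on $Z$ any $p\in C(\mathbb R^{n+1},\mathbb R^{n+1})$ enters solely through its restriction $p|_Z\in C(Z,\mathbb R^{n+1})$, and thus \eqref{eq_IC3} provides exactly the instance of \eqref{eq_IC2} that is needed. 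Theorem~\ref{thm_main1} then yields $V^+(\Phi\sharp\mu_0)=V^-(\Phi\sharp\mu_0)$, so that $\mathcal V^+(\Phi,\mu_0)=\mathcal V^-(\Phi,\mu_0)=:\bar{\mathcal V}(\Phi,\mu_0)$ for every $\Phi\in C(Z,Z)$. In particular the asserted boundary condition $\bar{\mathcal V}=\mathcal V^+$ on $(\mathcal O(\mu_0))^c$ holds automatically.

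Next I would check that $\bar{\mathcal V}$ lies in the stated regularity class and solves \eqref{HJB}. Writing $\bar{\mathcal V}=\mathcal V^+$, Proposition~\ref{prps_subvis} shows it is a viscosity subsolution on $\mathcal O(\mu_0)$; writing $\bar{\mathcal V}=\mathcal V^-$, Corollary~\ref{cor_supvis} shows it is a viscosity supersolution there, so $\bar{\mathcal V}$ is a viscosity solution of \eqref{HJB} on $\mathcal O(\mu_0)$. Boundedness and joint continuity come from Corollary~\ref{cor_reg2} and Lemma~\ref{lem_reg4}, while the Lipschitz dependence in $\Phi$ with respect to $\|\cdot\|_{L^2_{\mu_0}}$ follows from Lemma~\ref{lem_reg2} together with Cauchy--Schwarz, since $\mu_0$ is a probability measure and hence $\int_Z\|\Phi-\Psi\|\,d\mu_0\le\|\Phi-\Psi\|_{L^2_{\mu_0}}$. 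Thus $\bar{\mathcal V}$ satisfies hypotheses \ref{CP_H1} and \ref{CP_H2} of Proposition~\ref{prps_CP}.

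Uniqueness is then obtained by applying the comparison principle in both directions. Let $W:C(Z,Z)\times\Delta(Z)\to\mathbb R$ be any bounded continuous viscosity solution of \eqref{HJB} on $\mathcal O(\mu_0)$ that is Lipschitz in $\Phi$ and satisfies $W(\Phi,\mu_0)=\mathcal V^+(\Phi,\mu_0)$ for $\Phi\in(\mathcal O(\mu_0))^c$. Since $\bar{\mathcal V}=\mathcal V^+$ everywhere, $W$ and $\bar{\mathcal V}$ agree on $(\mathcal O(\mu_0))^c$, so the boundary hypothesis \ref{CP_H3} holds in either order. Regarding $W$ as a subsolution and $\bar{\mathcal V}$ as a supersolution, Proposition~\ref{prps_CP} gives $W\le\bar{\mathcal V}$ on $\mathcal O(\mu_0)$; exchanging the roles (using that $\bar{\mathcal V}$ is also a subsolution and $W$ also a supersolution, each being a solution) gives $\bar{\mathcal V}\le W$. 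Hence $W=\bar{\mathcal V}$ on $\mathcal O(\mu_0)$, and together with the matching boundary values, $W=\bar{\mathcal V}$ on all of $C(Z,Z)$.

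I expect the main difficulty to lie not in this final assembly but in the ingredients it rests on, chiefly the comparison principle of Proposition~\ref{prps_CP}. At the level of this theorem the delicate points are purely organizational: verifying that an arbitrary competitor $W$ genuinely possesses the $L^2_{\mu_0}$-Lipschitz regularity \ref{CP_H2} required by the comparison principle, confirming that the Isaacs condition \eqref{eq_IC3} supplies the form of \eqref{eq_IC2} needed to invoke Theorem~\ref{thm_main1} for pushforward measures, and fixing a single constant $C$ valid simultaneously for the sub- and supersolution inequalities each time Proposition~\ref{prps_CP} is applied.
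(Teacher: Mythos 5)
Your proposal is correct and assembles the theorem from exactly the same ingredients as the paper: Corollary \ref{cor_reg2} and Lemma \ref{lem_reg4} for boundedness and continuity, Proposition \ref{prps_subvis} and Corollary \ref{cor_supvis} for the sub- and supersolution properties, Theorem \ref{thm_main1} together with Lemma \ref{lem_extvalue} for the boundary behaviour, and Proposition \ref{prps_CP} for uniqueness. There are, however, two small but genuine deviations in routing worth recording. First, you deduce $\mathcal V^+(\Phi,\mu_0)=\mathcal V^-(\Phi,\mu_0)$ for \emph{every} $\Phi\in C(Z,Z)$ directly from Theorem \ref{thm_main1} and Lemma \ref{lem_extvalue}(ii), so that the comparison principle is invoked only for uniqueness against an arbitrary competitor $W$; the paper instead uses Theorem \ref{thm_main1} only to obtain the equality on $(\mathcal O(\mu_0))^c$ (the boundary condition) and re-derives the equality on $\mathcal O(\mu_0)$ by running Proposition \ref{prps_CP} with $W_1=\mathcal V^+$ and $W_2=\mathcal V^-$, combined with the trivial inequality $\mathcal V^+\ge\mathcal V^-$. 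Both arguments are valid; your shortcut shows the comparison step is logically redundant for the equality of values, while the paper's route is the purely PDE-based argument that would survive even if the existence of the value were not already known. Second, you explicitly patch the mismatch between the theorem's hypothesis \eqref{eq_IC3} and the hypothesis \eqref{eq_IC2} under which Theorem \ref{thm_main1} was stated: since $\Phi\sharp\mu_0\in\Delta(Z)$ and $Z$ is invariant under the dynamics (Assumptions \ref{Assum2}\,i)), only the instances of Isaacs' condition for measures carried by $Z$ -- including, via Dirac masses and constant $p$, the pointwise condition on $Z$ needed for Lemma \ref{lem_comp_info} and Proposition \ref{prps_AF} -- are ever used, and \eqref{eq_IC3} supplies exactly these; the paper cites Theorem \ref{thm_main1} without comment on this point, so your verification closes a small gap rather than opening one. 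Finally, your reading of ``Lipschitz continuous in $\Phi$'' as Lipschitz with respect to $\|\cdot\|_{L^2_{\mu_0}}$ (obtained for $\bar{\mathcal V}$ from Lemma \ref{lem_reg2} and Cauchy--Schwarz, and imposed on the competitor) is precisely what hypothesis \ref{CP_H2} of Proposition \ref{prps_CP} requires, and your two-sided application of the comparison principle, using that a solution is simultaneously a sub- and a supersolution with matching boundary values, is the uniqueness argument the paper leaves implicit.
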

\begin{proof}
	The regularity of $\mathcal{V}^\pm$ has been proved in Corollary \ref{cor_reg2} and Lemma \ref{lem_reg4}. It follows from Proposition \ref{prps_subvis} and Corollary \ref{cor_supvis} that $\mathcal{V}^+(\mu_0,\Phi)$ is a viscosity subsolution to \eqref{HJB} on $\mathcal{O}(\mu_0)$, and $\mathcal{V}^-(\mu_0,\Phi)$ is a viscosity supersolution to \eqref{HJB} on the same open set. By Theorem \ref{thm_main1} and Lemma \ref{lem_extvalue}, $\mathcal{V}^\pm(\mu_0,\cdot)$ verifies the boundary condition. Finally, the comparison principle (Proposition \ref{prps_CP}) implies $\mathcal{V}^+(\mu_0,\cdot)=\mathcal{V}^-(\mu_0,\cdot)$ on $\mathcal{O}(\mu_0)$ as well as the uniqueness of viscosity solution verifying the regularity conditions and the boundary condition. The proof is complete.
\end{proof}

\section*{Acknowledgement}
This work was supported by the National Natural Science Foundation of China under Grant No. 12201380.
\bibliographystyle{plain}
\bibliography{bib_RN}
\end{document}